\documentclass[12pt]{article}

\usepackage{amsmath}
\usepackage{amsthm}
\usepackage{amsfonts}
\usepackage{mathrsfs}
\usepackage{stmaryrd}
\usepackage{setspace}
\usepackage{fullpage}
\usepackage{amssymb}
\usepackage{breqn}
\usepackage{enumitem}
\usepackage{bbold}
\usepackage{authblk}
\usepackage{comment}
\usepackage{hyperref}
\usepackage{pgf,tikz}
\usepackage{graphicx}
\usepackage{subcaption}

\newtheorem{thm}{Theorem}[section]

\newenvironment{thmbis}[1]
{\renewcommand{\thethm} {\ref{#1}} \addtocounter{thm}{-1} 
\begin{thm}}
{\end{thm}}

\newtheorem{lemma}[thm]{Lemma}

\newtheorem{proposition}[thm]{Proposition}
\newtheorem{prop}[thm]{Proposition}

\newtheorem{cor}[thm]{Corollary}

\newtheorem{clm}[thm]{Claim}

\newcommand\ex{\ensuremath{\mathrm{ex}}}

\newcommand\cC{{\mathcal C}}

\newcommand\cF{{\mathcal F}}
\newcommand\cG{{\mathcal G}}
\newcommand\cH{{\mathcal H}}

\newcommand\cN{{\mathcal N}}

\def\lc{\left\lceil}
\def\rc{\right\rceil}

\def\lf{\left\lfloor}
\def\rf{\right\rfloor}

\newtheorem*{thm*}{Theorem}
\newtheorem*{prop*}{Proposition}
\newcommand{\ignore}[1]{}

\title{On the connected Tur\'an numbers of Berge paths and Berge cycles}
\author{
Xiamiao Zhao \thanks{\small Department of Mathematical Sciences, Tsinghua University, Beijing 100084, China. Email:
\small zxm23@mails.tsinghua.edu.cn}\,,
\hspace{0.2em}
D\'{a}niel Gerbner\thanks{\small Alfr\'ed R\'enyi Institute of Mathematics. Email:
\small \texttt{gerbner.daniel@renyi.hu}.}\,, \hspace{0.2em} 
Junpeng Zhou\thanks{\small  \textit{Corresponding author}. Department of Mathematics, Shanghai University, Shanghai 200444, P.R. China. Email:
\small \texttt{junpengzhou@shu.edu.cn}.} \thanks{\small Newtouch Center for Mathematics of Shanghai University, Shanghai 200444, P.R. China.}}

\date{}

\begin{document}

\maketitle

\begin{abstract} 
Given a graph $F$, a Berge copy of $F$ (Berge-$F$ for short) is a hypergraph obtained by enlarging the edges arbitrarily. Gy\H{o}ri, Salia and Zamora determined the maximum number of hyperedges in a connected $r$-uniform hypergraph on $n$ vertices containing no Berge path of length $k-1$ for all $k\geq 2r+14$ and sufficiently large $n$, and asked for the minimum $k_0$ such that this extremal number holds for all $k\geq k_0$. 
In this paper, we prove that the extremal number holds for all $k\geq 2r+2$ and fails for $k\le 2r+1$, thereby completely resolving the problem posed by Gy\H{o}ri, Salia and Zamora. 
Moreover, we improve the result of F\"uredi, Kostochka and Luo, who determined the maximum number of hyperedges in a $2$-connected $n$-vertex $r$-uniform hypergraph containing no Berge cycle of length at least $k$ for all $k\geq 4r$ and sufficiently large $n$, by showing that this extremal number holds for all $k\geq 2r+2$ and fails for $k\le 2r+1$.

Our approach reduces Berge-Tur\'an problems to classical extremal graph theory problems, and applies recent work of Ai, Lei, Ning and Shi concerning the feasibility of graph parameters and the Kelmans operation. 
\end{abstract}

{\noindent{\bf Keywords}: Tur\'{a}n number, Berge hypergraph, Kelmans operation}

{\noindent{\bf AMS subject classifications: }05C35, 05C38 }

\section{Introduction}
An \textit{$r$-uniform hypergraph} ($r$-graph for short) $\cH=(V(\cH),E(\cH))$ consists of a vertex set $V(\cH)$ and a hyperedge set $E(\cH)$, where each hyperedge in $E(\cH)$ is an $r$-subset of $V(\cH)$. For simplicity, let $e(\cH):=|E(\cH)|$. The \textit{degree} $d_\cH(v)$ of a vertex $v$ is the number of hyperedges containing $v$ in $\cH$.

Let $\mathcal{F}$ be a family of $r$-graphs. An $r$-graph $\cH$ is called \textit{$\mathcal{F}$-free} if $\cH$ does not contain any member in $\mathcal{F}$ as a subhypergraph. The \textit{Tur\'{a}n number} ${\rm{ex}}_r(n,\mathcal{F})$ of $\mathcal{F}$ is the maximum number of hyperedges in an $\mathcal{F}$-free $r$-graph on $n$ vertices. If $\mathcal{F}=\{G\}$, then we write ${\rm{ex}}_r(n,G)$ instead of ${\rm{ex}}_r(n,\{G\})$. When $r=2$, we write $\mathrm{ex}(n,\mathcal{F})$ instead of $\mathrm{ex}_2(n,\mathcal{F})$.

Given a graph $F$, an $r$-graph $\cH$ is a \textit{Berge-$F$} if there is a bijection $\phi: E(F)\rightarrow E(\cH)$ such that $e\subseteq \phi(e)$ for each $e\in E(F)$. For a fixed graph $F$, many hypergraphs are Berge-$F$. For convenience, we refer to this collection of hypergraphs as ``Berge-$F$''. We call the vertices of $F$ the \textit{defining vertices} and we call the hyperedges $\phi(e)$ the \textit{defining hyperedges} of the Berge-$F$.
Berge \cite{A2} defined the Berge cycle, and Gy\H{o}ri, Katona and Lemons \cite{B6} defined the Berge path. Later, Gerbner and Palmer \cite{B3} generalized the established concepts of Berge cycle and Berge path to general graphs.

We first consider the case where $r=2$. Let $P_k$ denote the path on $k$ vertices, and let $\cC_{\ge k}$ denote the family of cycles with length at least $k$. 
In 1959, Erd\H{o}s and Gallai \cite{ErG} proved the following results for $\mathrm{ex}(n, P_k)$ and $\mathrm{ex}(n,\mathcal{C}_{\geq k})$.

\begin{thm}[Erd\H{o}s and Gallai \cite{ErG}]\label{thm1.1}
Fix integers $n$ and $k$ such that $n \geq k \geq 2$. Then $\operatorname{ex}(n, P_k) \leq \frac{(k-2)n}{2}$ with equality holding if and only if $G$ is the disjoint union of complete graphs on $k-1$ vertices.
\end{thm}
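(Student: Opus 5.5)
The plan is induction on $n$, following the classical longest-path argument. It suffices to prove the bound for connected $G$. Indeed, if $G$ has components $G_1,\dots,G_t$ on $n_1,\dots,n_t$ vertices, a bound $e(G_i)\le (k-2)n_i/2$ for each component sums to the claim. Equality for $G$ then forces equality in every component. A component with $n_i<k-1$ has at most $\binom{n_i}{2}<(k-2)n_i/2$ edges, so it cannot attain equality. The equality case therefore reduces to showing that a connected $P_k$-free graph on $n_i\ge k-1$ vertices attains equality only if it is $K_{k-1}$.

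For a connected $P_k$-free graph $G$ on $m$ vertices, I split into cases.

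\emph{Case $m\le k-1$.} Here $e(G)\le\binom{m}{2}=\frac{(m-1)m}{2}\le \frac{(k-2)m}{2}$. Equality holds iff $m=k-1$ and $G=K_{k-1}$.

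\emph{Case $m\ge k$.} Here I show strict inequality. Take a longest path $P=v_1v_2\cdots v_\ell$, so $\ell\le k-1$.
\begin{enumerate}[label=(\roman*)]
\item By maximality, all neighbours of $v_1$ and of $v_\ell$ lie on $P$.
\item If $d(v_1)+d(v_\ell)\ge \ell$, the standard Pósa/Ore crossing argument gives an index $i$ with $v_1v_{i+1},v_iv_\ell\in E$. This produces a cycle on $V(P)$.
\item Since $m\ge k>\ell$ and $G$ is connected, some vertex off $P$ is adjacent to this cycle. Opening the cycle at that point gives a path on $\ell+1$ vertices, contradicting maximality.
\item Hence $d(v_1)+d(v_\ell)\le \ell-1\le k-2$, so some vertex $v$ has $d(v)\le (k-2)/2$.
\end{enumerate}

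I then delete such a low-degree vertex and apply induction. The natural choice is to delete $v\in\{v_1,v_\ell\}$ with $d(v)\le\lfloor (k-2)/2\rfloor$. The remaining graph $G-v$ need not be connected, but the inductive statement is for all graphs on fewer vertices, via the component reduction above. This gives
\[
e(G)\le \frac{(k-2)(m-1)}{2}+\frac{k-2}{2}=\frac{(k-2)m}{2}.
\]
Equality would need $d(v)=(k-2)/2$ and equality for $G-v$. Then every component of $G-v$ is a $K_{k-1}$, and $v$ has exactly $(k-2)/2<k-1$ neighbours.

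The main obstacle is ruling out this borderline equality case. Connectedness of $G$ means $v$ sends edges into each $K_{k-1}$ component of $G-v$, so pick a neighbour $u$ of $v$ in some such $K_{k-1}$. A Hamiltonian path of that $K_{k-1}$ starting at $u$, prefixed by $v$, is a path on $k$ vertices in $G$, which is a contradiction. So equality is impossible when $m\ge k$.

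Combining the two cases gives $e(G)\le (k-2)m/2$ for connected $G$, with equality iff $G=K_{k-1}$. Summing over components yields the theorem and its equality characterization. The only delicate point is step (ii), the crossing-chord argument producing the cycle. It requires the standard count: the sets $\{i: v_1v_{i+1}\in E\}$ and $\{i: v_iv_\ell\in E\}$ both lie in $\{1,\dots,\ell-1\}$, so sizes summing to at least $\ell$ force them to intersect.
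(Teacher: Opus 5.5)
The paper gives no proof of this statement. It is quoted from \cite{ErG} as background, so there is no argument in the paper to compare against, and your proposal has to stand on its own. It does: it is a correct, self-contained proof of both the bound and the equality characterization.

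The core is the standard longest-path argument. In a connected $P_k$-free graph on at least $k$ vertices, an endpoint of a longest path has degree at most $(k-2)/2$. Otherwise the crossing-chord count closes the path into a cycle on $V(P)$, and connectivity then extends it to a longer path. Deleting that endpoint lets the induction run.

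Your equality analysis is right. Equality forces $d(v)=(k-2)/2$ and forces $G-v$ to be a disjoint union of copies of $K_{k-1}$. Any edge from $v$ into one of these copies then yields a path on $k$ vertices.

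Two small points:
\begin{itemize}
\item Your induction proves the statement for every $n$, not only $n\ge k$. This is exactly what makes the recursion legitimate, since $G-v$ may have only $k-1$ vertices or components smaller than $k-1$. You handle this correctly through the component reduction.
\item The degenerate case $\ell\le 2$ in step (ii), where the ``cycle'' is a single edge, is already excluded by (i) together with connectivity.
\end{itemize}

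For comparison, a common alternative derives the inequality from Theorem~\ref{thm1.2} by joining a new vertex to all of $G$. A $P_k$-free $G$ then yields an $(n+1)$-vertex graph with no cycle of length at least $k+1$, so $e(G)+n\le kn/2$. That route is shorter for the bound. However, Theorem~\ref{thm1.2} as stated carries no equality characterization, so it would need extra work to show that only disjoint unions of $K_{k-1}$ attain equality. Your direct induction delivers that part at no extra cost.
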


\begin{thm}[Erd\H{o}s and Gallai \cite{ErG}]\label{thm1.2}
Fix integers $n$ and $k$ such that $n \geq k \geq 3$. Then
$\operatorname{ex}(n, \mathcal{C}_{\geq k}) \leq \frac{(k-1)(n-1)}{2}.$
\end{thm}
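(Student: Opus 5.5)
We prove the bound $e(G)\le \frac{(k-1)(n-1)}{2}$ for every $n$-vertex graph $G$ with no cycle of length at least $k$, by induction on $n$. The statement is for $n\ge k$, but it is convenient to prove it for all $n\ge 1$. For $n<k$ the bound $\binom{n}{2}=\frac{n(n-1)}{2}\le \frac{(k-1)(n-1)}{2}$ holds trivially, which handles small components and the base cases.

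\textbf{Reduction to the 2-connected case.} If $G$ is disconnected with components of orders $n_1,\dots,n_t$, the bound is superadditive in the right direction:
\[
\sum_i \frac{(k-1)(n_i-1)}{2}\le \frac{(k-1)(n-1)}{2}.
\]
If $G$ is connected but has a cut vertex $v$, write $G=G_1\cup G_2$ with $V(G_1)\cap V(G_2)=\{v\}$ and $n_1+n_2=n+1$. Induction gives
\[
e(G)\le \frac{(k-1)(n_1-1+n_2-1)}{2}=\frac{(k-1)(n-1)}{2}.
\]
So we may assume $G$ is 2-connected with $n\ge k$.

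\textbf{The 2-connected case.} Here we use a minimum-degree argument. If some vertex $v$ has $d(v)\le \frac{k-1}{2}$, delete it. The graph $G-v$ still has no long cycle, and induction gives
\[
e(G)\le \frac{(k-1)(n-2)}{2}+\frac{k-1}{2}=\frac{(k-1)(n-1)}{2}.
\]
Otherwise $\delta(G)\ge k/2$. Dirac's theorem for 2-connected graphs states that a 2-connected graph with minimum degree $\delta$ on $n$ vertices has a cycle of length at least $\min\{n,2\delta\}$. Since $n\ge k$ and $2\delta\ge k$, this gives a cycle of length at least $k$, a contradiction.

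\textbf{Main obstacle.} The key step is the Dirac long-cycle lemma, which I would prove directly. Take a longest path $P=x_0x_1\cdots x_m$ in $G$. By maximality, all neighbours of $x_0$ and $x_m$ lie on $P$. If there is a crossing index $i$ with $x_0x_{i+1},x_ix_m\in E(G)$, we obtain a cycle on $V(P)$. By 2-connectivity, if this cycle does not already span $G$, a vertex off the cycle yields a longer path, contradicting maximality. Hence either there is a Hamiltonian cycle, or $|V(P)|=m+1\ge 2\delta+1$ (which gives $n\ge 2\delta+1$ when a cycle through $V(P)$ exists).

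If no crossing exists, we choose the path to maximize $m$ and then the span of $x_0$'s neighbours. Using 2-connectivity (Menger: two disjoint paths from the ends into $P$), the standard Pósa-type argument then combines the furthest neighbour of $x_0$ and the nearest neighbour of $x_m$ to get a cycle of length at least $d(x_0)+d(x_m)\ge 2\delta$. Getting these index bookkeeping details right is the delicate part of the whole argument.
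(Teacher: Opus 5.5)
The paper does not prove this statement. It is quoted from Erd\H{o}s and Gallai as background, so there is no in-paper argument to compare against. Your proof is correct and is the standard short derivation:
\begin{itemize}
\item the trivial bound $\binom{n}{2}\le\frac{(k-1)(n-1)}{2}$ for $n<k$;
\item superadditivity over components;
\item splitting at a cut vertex, using $n_1+n_2=n+1$;
\item deleting a vertex of degree at most $(k-1)/2$;
\item otherwise using $\delta\ge k/2$ (integrality gives $2\delta\ge k$) together with Dirac's theorem that a $2$-connected graph has a cycle of length at least $\min\{n,2\delta\}$.
\end{itemize}
Dirac's theorem is a classical published result, so citing it makes your argument complete.

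The only weak part is your optional self-contained sketch of Dirac's theorem, which as written is not yet a proof.
\begin{itemize}
\item \emph{Interleaving case.} In the crossing-free case, suppose some neighbour $x_i$ of $x_m$ has smaller index than some neighbour $x_j$ of $x_0$. Take such a pair with $j-i$ minimal. Then $x_0x_1\cdots x_i x_m x_{m-1}\cdots x_j x_0$ is a cycle of length $m-j+i+2$. The shifted-index counting (the sets $\{l-1: x_0x_l\in E\}$ and $\{l: x_lx_m\in E\}$ are disjoint and avoid $\{i+1,\dots,j-2\}$) shows this is at least $d(x_0)+d(x_m)$. No $2$-connectivity is needed here. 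Note that you want this closest interleaved pair, not the furthest neighbour of $x_0$ and the nearest neighbour of $x_m$: the cycle through the latter can miss many neighbours.
\item \emph{Non-interleaving case.} The real content is the case where every neighbour of $x_0$ has index at most that of every neighbour of $x_m$. There $2$-connectivity is essential: two copies of $K_{\delta+1}$ sharing a vertex have a longest path with $d(x_0)+d(x_m)=2\delta$ but no cycle longer than $\delta+1$.
\item \emph{Menger step.} Your phrase ``two disjoint paths from the ends into $P$'' does not make sense as stated, since the ends already lie on $P$. What is needed is a genuine argument, for example a path in $G-x_p$ jumping over the furthest neighbour $x_p$ of $x_0$, combined with an extremal choice of $P$.
\end{itemize}
Either work that case out carefully, or simply cite Dirac (1952) and drop the sketch.
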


Note that the extremal graph in Theorem \ref{thm1.1} is not connected.
When restricting our attention to connected graphs, Kopylov~\cite{K} determined the value of $\ex^{conn}(n,P_k)$, where $\ex^{conn}(n,P_k)$ denotes the Tur\'{a}n number for connected graphs avoiding a path of length $k$. Subsequently, Balister, Gy\H{o}ri, Lehel and Schelp \cite{BGLS} strengthened Kopylov's result by fully characterizing the extremal graphs for all $n$. 

\begin{thm}[Kopylov \cite{K}, Balister, Gy\H{o}ri, Lehel, Schelp \cite{BGLS}]
Fix integers $n \ge k \ge 5$. Then
$$
\ex^{conn}(n, P_k) = \max\left\{
\binom{k-2}{2} + (n - k + 2),\;
\binom{\left\lceil \frac{k}{2} \right\rceil}{2} + \left(\left\lfloor \frac{k-2}{2} \right\rfloor\right)\left(n - \left\lceil \frac{k}{2} \right\rceil\right)
\right\}.
$$
\end{thm}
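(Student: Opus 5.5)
We argue by induction on $n$. The two terms in the maximum come from two constructions, and a connected $P_k$-free graph with more edges than both must fail to exist.

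\textbf{Lower bound.} For the first term, take $K_{k-2}$ and attach $n-k+2$ pendant vertices to one of its vertices. A longest path uses at most $k-2$ clique vertices plus one pendant vertex, so it has at most $k-1$ vertices.

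For the second term, let $a=\lfloor (k-2)/2\rfloor$. Take a clique $A$ on $a$ vertices, and join every vertex of $A$ to every vertex of an independent set $B$ with $n-\lceil k/2\rceil$ vertices. Then add a further clique so that the total on the dense part has $\lceil k/2\rceil$ vertices inducing $\binom{\lceil k/2\rceil}{2}$ edges. Concretely, $A$ together with $\lceil k/2\rceil-a\in\{1,2\}$ extra vertices forms the clique $K_{\lceil k/2\rceil}$, and the remaining vertices are joined to $A$ only.

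Any path alternates through $A$ when it visits vertices outside $A$. It can therefore use at most $a$ vertices of $A$, plus $a+1$ outside vertices, plus the one extra edge inside the clique when $k$ is odd. This gives fewer than $k$ vertices. Both graphs are connected.

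\textbf{Upper bound.} Let $G$ be connected and $P_k$-free with $e(G)$ exceeding the maximum.
\begin{enumerate}
\item If $G$ has a vertex $v$ of degree at most $\lfloor (k-2)/2\rfloor$ whose removal keeps $G$ connected, delete it. One checks that $f(n)-f(n-1)\ge \lfloor (k-2)/2\rfloor$, where $f$ denotes the claimed maximum. This holds because the second term grows by exactly $\lfloor (k-2)/2\rfloor$ and the first by $1$, and for large $n$ the maximum is the second term. The case where the first term dominates needs a separate check, comparing with the small-$n$ base cases. Induction then gives the contradiction.
\item Otherwise, we may assume the minimum degree among non-cut vertices (for example, the leaves of a spanning tree) is at least $\lfloor k/2\rfloor$. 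We then apply a Pósa/Kopylov-type argument. Take a longest path $P$ with endpoints $x,y$. All neighbours of $x$ and $y$ lie on $P$. Using rotations, we get many endpoints, each of degree $\ge \lfloor k/2\rfloor$. Connectivity together with the standard crossing argument (as in the proof of Theorem~\ref{thm1.2}) forces a cycle $C$ on all vertices of $P$. If some vertex outside $C$ existed, connectivity would extend $C$ to a longer path, contradicting maximality. So $V(G)=V(P)$ and $n\le k-1$. For such $n$ the bound $\binom{n}{2}$ is checked directly against $f(n)$.
\end{enumerate}

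\textbf{Main obstacle.} The hardest step is the degree bookkeeping in step 2 when degrees are between the two regimes. Kopylov's refinement handles this by considering the $t$-disintegration: repeatedly delete vertices of degree $\le t$ for a suitable threshold $t$. One then shows that the remaining core is either empty or a clique-like structure of size at most $k-t$. Counting the deleted edges ($\le t$ each) plus $\binom{k-t}{2}$ gives $\binom{k-t}{2}+t(n-k+t)$. This is a convex function of $t$, so its maximum over the admissible range $t\in\{1,\lfloor (k-2)/2\rfloor\}$ is attained at an endpoint, and these endpoints yield exactly the two terms of the claimed formula.
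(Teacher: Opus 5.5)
The paper does not prove this statement (it is quoted from Kopylov and Balister--Gy\H{o}ri--Lehel--Schelp), so your proposal has to stand on its own, and there is a genuine gap in the upper bound. The parts that work are the following. Your constructions are right: they are $W(n,k-1,1)$ and $W(n,k-1,\lf k/2\rf-1)$. Step~2 is also right. An endpoint of a longest path $P$ has all its neighbours on $P$, so it is not a cut vertex. Hence both endpoints have degree at least $\lf k/2\rf$, so their degree sum is at least $k-1\ge |V(P)|$. The crossing argument then closes $P$ into a cycle, and connectivity forces $n\le k-1$, contradicting $n\ge k$.

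The gap is in Step~1. Write $f(n)=\max\{F(n),S(n)\}$, where the two terms have slopes $1$ and $t=\lf (k-2)/2\rf$. When $t\ge 2$ and $F(n-1)>S(n-1)$, the increment $f(n)-f(n-1)$ is strictly smaller than $t$. For instance, for $k=12$ one has $f(12)=47$ and $f(13)=50$. Deleting a non-cut vertex of degree $5$ from a $13$-vertex graph with $51$ edges leaves $46\le f(12)$ edges, which is no contradiction.

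Worse, the induction has no base. The formula is false at $n=k-1$, since $K_{k-1}$ has $\binom{k-1}{2}>f(k-1)$ edges. So every graph with $n=k$ that has a non-cut vertex of degree at most $t$ is simply not covered. Your ``separate check against small-$n$ base cases'' therefore has nothing valid to compare with; for the same reason no check of $\binom{n}{2}$ against $f(n)$ is possible below $k$. The initial range of $n$ where the first term is the maximum is exactly where the theorem is hard, and a single degree threshold cannot handle it.

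Your closing paragraph names the right tool, Kopylov's disintegration, but it asserts the key lemma without proof. That lemma says that for a suitable threshold the core of an edge-maximal graph is empty or a clique of bounded order, and it is the substance of the theorem. The indexing is also off by one. The expression $\binom{k-t}{2}+t(n-k+t)$ is the $2$-connected $\mathcal{C}_{\ge k}$ bound of Theorem~\ref{thm1.4}. For $P_k$ in connected graphs the relevant quantity is $e(W(n,k-1,s))=\binom{k-1-s}{2}+s(n-k+1+s)$, and at $t=1$ your expression exceeds the first term by $k-3$. So the endpoints do not give ``exactly the two terms''.

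There are two short correct routes using tools already stated in the paper.
\begin{enumerate}
\item The parameter $P(G)=e(G)$ is feasible, because Kelmans' operation preserves the number of edges. Theorem~\ref{path} then gives $e(G)\le \max_{1\le s\le \lf k/2\rf-1} e(W(n,k-1,s))$. This expression is convex in $s$, so the maximum is at $s=1$ or $s=\lf k/2\rf-1$, which are precisely the two terms.
\item Add a new vertex joined to all of $G$. The result is $2$-connected on $n+1$ vertices and has no cycle of length at least $k+1$. Apply Theorem~\ref{thm1.4} with parameters $(n+1,k+1)$ and subtract the $n$ new edges; this yields exactly the stated maximum.
\end{enumerate}
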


In \cite{K}, Kopylov also obtained the following result. 

\begin{thm}[Kopylov \cite{K}]\label{thm1.4}
Let \( n \geqslant k \geqslant 5 \) and let \( t = \left\lfloor \frac{k-1}{2} \right\rfloor \). If \( G \) is a 2-connected \( n \)-vertex graph with $$
e(G) > \max\left\{\binom{k-2}{2}+2(n-k+2),\binom{k-t}{2}+t(n-k+t)\right\},$$
then \( G \) has a cycle of length at least \( k \).
\end{thm}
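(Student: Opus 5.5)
The plan is to follow Kopylov's original strategy: a closure (shifting) argument combined with a degree-based disintegration, where the extremal configurations are exactly the two graphs whose edge counts appear in the bound of Theorem \ref{thm1.4}. Suppose for contradiction that $G$ is a 2-connected $n$-vertex graph with no cycle of length at least $k$ and with
$$e(G)>h(n,k,a):=\binom{k-a}{2}+a(n-k+a)$$
for both $a=2$ and $a=t$. Note that $h$ is convex in $a$ on $[2,t]$, so $e(G)>h(n,k,a)$ for every $a\in\{2,\dots,t\}$.

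First, replace $G$ by its $k$-closure: repeatedly add an edge $uv$ whenever $d(u)+d(v)\ge k$ and $u,v$ are nonadjacent. By the standard Bondy--Chvátal/Woodall type lemma for 2-connected graphs, adding such an edge does not create a cycle of length at least $k$ when none existed. Indeed, a long cycle through $uv$ gives a long $u$--$v$ path, and a $u$--$v$ path in a 2-connected graph together with degree sum $\ge k$ yields a cycle of length $\ge k$. So we may assume $G$ is $k$-closed and still 2-connected with the same edge bound.

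Next, apply the disintegration step. For $a\in\{2,\dots,t\}$, repeatedly delete vertices of degree at most $a$, and let $H(G,a)$ be what remains. Since each deletion removes at most $a$ edges, if $H(G,a)$ were empty we would get roughly $e(G)\le a(n-k+a)+\binom{k-a}{2}$ after accounting for the last $k-a$ vertices, contradicting the hypothesis. Hence $H(G,t)\neq\emptyset$. Because $G$ is $k$-closed, any two vertices of $H(G,t)$ with degree sum $\ge k$ are adjacent, and vertices in $H(G,t)$ have degree $>t\ge (k-1)/2-\tfrac12$. From this one shows that $H(G,t)$ is a clique.

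Let $s=|H(G,t)|$ and choose $a$ maximal with $H(G,a)=H(G,t)$, or more precisely compare $s$ with $k-a$. The core of Kopylov's argument is to show that if the clique has $s\ge k-a$ vertices, then 2-connectivity allows us to attach a path through outside vertices, producing a cycle of length $\ge k$. If instead $s<k-a$, the disintegration count bounds $e(G)$ by $h(n,k,a')$ for some $a'\in[2,t]$, again a contradiction.

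The main obstacle is the long-cycle construction inside the closed graph. The approach here is to take a longest cycle $C$ (using Dirac's bound $\ge 2\delta$-type statements on the 2-connected closed graph) and use Pósa-type rotations on paths with both ends in the clique, combined with two vertex-disjoint paths from $C$ to outside components guaranteed by 2-connectivity. The delicate part is the boundary case $a=2$, where the extremal graph is $K_{k-2}$ plus vertices joined to two clique vertices. There one must check that the degree threshold $a+1$ still forces the clique to reach size $k-2$ and that every outside vertex contributes at most $2$ edges. This is where the additive constant $2(n-k+2)$, rather than $(n-k+2)$, arises.
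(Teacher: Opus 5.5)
\textbf{Gap: the $k$-closure step is false.} The paper only quotes this theorem from Kopylov and gives no proof, so I am comparing your sketch with Kopylov's argument. Your architecture is the right one: pass to an edge-maximal graph, take the $t$-disintegration, show the core is a clique $K_r$, then recount with threshold $k-r$. But the step everything hinges on fails. The $k$-closure does \emph{not} preserve the absence of cycles of length at least $k$ in $2$-connected graphs.

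Kopylov's lemma gives $c(G)\ge\min\{m+1,d_P(x)+d_P(y)\}$ for an $x$--$y$ path $P$ with $m$ edges, where $d_P$ counts only neighbours \emph{on} $P$. The long $u$--$v$ path coming from a long cycle through $uv$ need not contain the neighbours of $u$ and $v$. Here is a concrete counterexample:
\begin{itemize}
\item Take three internally disjoint paths $ua_1a_2a_3v$, $ub_1b_2b_3v$, $uc_1c_2c_3v$.
\item Add vertices $z_1,z_2,z_3$, each adjacent to $a_2$ and $v$.
\item Add vertices $w_1,w_2,w_3$, each adjacent to $c_2$ and $u$.
\end{itemize}
This graph is $2$-connected. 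Every $u$--$v$ path has length $4$, and every cycle missing $u$ or $v$ has length $4$, so the circumference is $8$. Let $k=9$. The nonadjacent vertices $a_2,c_2$ have $d(a_2)+d(c_2)=10\ge k$. Yet $a_2a_1ub_1b_2b_3vc_3c_2$ has $8$ edges, so adding $a_2c_2$ creates a $9$-cycle. Your derivation that $H(G,t)$ is a clique uses nothing but this closure property, so it is unsupported.

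\textbf{Repair of the clique step.} Avoid degree-sum closure altogether.
\begin{itemize}
\item Take $G$ edge-maximal without cycles of length at least $k$. It is still $2$-connected and has more edges, and every nonadjacent pair is joined by a path with at least $k$ vertices.
\item Suppose $H=H(G,t)$ is not a clique. Choose nonadjacent $x,y\in V(H)$ and an $x$--$y$ path $P$ with at least $k$ vertices, so that $|V(P)|$ is maximum over all such choices.
\item Let $z\in N_H(x)\setminus V(P)$. If $z\not\sim y$, then $zxP$ is a longer admissible path. If $z\sim y$, then $xPyzx$ is a cycle with more than $k$ vertices.
\item Hence $N_H(x)\cup N_H(y)\subseteq V(P)$. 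Then $d_P(x)+d_P(y)\ge 2t+2\ge k$, and Kopylov's lemma gives a cycle of length at least $k$.
\end{itemize}

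\textbf{Second stage not yet an argument.} Let $r=|V(H)|$; then $t+2\le r\le k-2$. What must be shown is that the $(k-r)$-core of $G$ is still $H$. That yields $e(G)\le\binom{r}{2}+(k-r)(n-r)=h(n,k,k-r)$ with $2\le k-r\le t$. Your comparison of $s$ with $k-a$ does not formulate this. The references to P\'osa rotations and Dirac-type bounds do not show that no vertex outside $H$ survives the $(k-r)$-disintegration. That is the substantial part of Kopylov's proof, and it is missing from your plan.
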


Now let us turn our attention to hypergraph analogues of paths and cycles. Gy\H{o}ri, Katona and Lemons \cite{B6} generalized the Erd\H{o}s-Gallai theorem to Berge paths. Specifically, they determined ${\rm{ex}}_r(n,{\rm Berge}{\text -}P_{k})$ for the cases when $k>r+2>4$ and $r\geq k-1>2$. The case when $k=r+2>4$ was settled by Davoodi, Gy\H{o}ri, Methuku and Tompkins \cite{10-10A1}.

\begin{thm}[Gy\H{o}ri, Katona, Lemons\cite{B6}, Davoodi, Gy\H{o}ri, Methuku, Tompkins \cite{10-10A1}]\label{gykl} 
\item[\textbf{(i)}] If $k\ge r+2>4$, then ${\rm{ex}}_r(n,{\rm Berge}{\text -}P_{k})\leq \frac{n}{k-1}\binom{k-1}{r}$. Furthermore, this bound is sharp whenever $k-1$ divides $n$.
\item[\textbf{(ii)}] If $r\geq k-1>2$, then ${\rm{ex}}_r(n,{\rm Berge}{\text -}P_{k})\leq \frac{n(k-2)}{r+1}$. Furthermore, this bound is sharp whenever $r+1$ divides $n$.
\end{thm}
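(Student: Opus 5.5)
The plan is induction on $n$ by deleting a low-degree vertex, in the spirit of the original Erd\H{o}s--Gallai argument, with a Berge version of the longest-path rotation argument supplying the contradiction when no low-degree vertex exists. Let $\cH$ be a Berge-$P_k$-free $r$-graph on $n$ vertices. We may assume $\cH$ is connected, since both bounds are linear in $n$ and so add over components. If $|V(\cH)|\le k-1$ in case (i), then trivially $e(\cH)\le\binom{n}{r}\le \frac{n}{k-1}\binom{k-1}{r}$, because $\binom{n}{r}/n$ is increasing in $n$. In case (ii), when $|V(\cH)|\le r+1$, the bound $e(\cH)\le \frac{n(k-2)}{r+1}$ should follow from a direct check. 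A Berge path can use each hyperedge only once and visits at most $n$ vertices. So when $n\le r+1$ and there are many hyperedges, a Berge path of length $\min(n-1,e(\cH))$ can be built greedily through a system of distinct representatives. Hence $e(\cH)\le k-2$ when $n\ge k$, and if $n\le k-1$ then $\binom{n}{r}\le n(k-2)/(r+1)$ holds as well.

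For the inductive step in case (i), set the threshold $d_0=\binom{k-1}{r}/(k-1)=\binom{k-2}{r-1}/r$. If some vertex $v$ has $d_\cH(v)\le d_0$, we delete $v$ together with its hyperedges and apply induction, which gives $e(\cH)\le d_0+\frac{n-1}{k-1}\binom{k-1}{r}$, as required. In case (ii) we do the same with threshold $\frac{k-2}{r+1}$. Since that threshold may be fractional, we would instead delete a whole set $S$ of $r+1$ vertices, namely a hyperedge together with one further vertex. We would then aim to show that at most $k-2$ hyperedges meet $S$, or else obtain the desired structure directly.

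The remaining situation is that $\cH$ is connected, has more than $k-1$ vertices, and $\delta(\cH)>d_0$. Take a longest Berge path $v_1,e_1,v_2,\dots,e_{\ell-1},v_\ell$, and suppose $\ell\le k-1$. By maximality, every hyperedge containing the endpoint $v_1$ that is not among the defining hyperedges lies entirely inside $\{v_1,\dots,v_\ell\}$. The same holds for $v_\ell$. Since $d(v_1)>\binom{k-2}{r-1}/r$, there are many such hyperedges. A P\'osa-type rotation argument should then give two things: a Berge cycle on all $\ell$ defining vertices, using the hyperedges at $v_1$ and $v_\ell$ as chords, and, for $k>r+2$, enough distinct hyperedges to do so. Once we have this Berge cycle, connectivity together with $n>\ell$ provides a hyperedge leaving the cycle, which extends it to a Berge path on $\ell+1$ vertices. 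This contradicts maximality.

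I expect the main obstacle to be the rotation step. Unlike the graph case, several chords from $v_1$ can lie in a single hyperedge, and they can also reuse hyperedges already on the path. Careful counting is therefore needed to show that $d(v_1)+d(v_\ell)$ large forces some index $i$ for which $v_1$ lies in a hyperedge with $v_{i+1}$, $v_\ell$ lies in a hyperedge with $v_i$, and all hyperedges involved are distinct. This counting is exactly where the hypothesis $k\ge r+2$ enters, and the boundary case $k=r+2$ is the delicate one treated by Davoodi, Gy\H{o}ri, Methuku and Tompkins. There I would first try to count, for each hyperedge $h\ni v_1$, how many path positions $i$ it can serve, namely at most $r-1$. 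I would then compare this count against the $\ell-1\le k-2$ available positions.
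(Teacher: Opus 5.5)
The paper only quotes this theorem (from Gy\H{o}ri--Katona--Lemons and Davoodi--Gy\H{o}ri--Methuku--Tompkins) and gives no proof, so your sketch has to stand on its own. It has genuine gaps.

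\textbf{Part (i).} Everything rests on the final step, where you want connectivity, $n\ge k$ and $\delta(\cH)>d_0=\binom{k-2}{r-1}/r$ to force a Berge-$P_k$. That implication is false, so no P\'osa-type rotation can deliver it. Take $r=4$, $k=6$, so $d_0=1$. Let the vertices be the eight edges of the multigraph on $\{A,B,C,D\}$ consisting of $K_4$ plus a second copy of $AB$ and of $CD$. For each $X\in\{A,B,C,D\}$, let the hyperedge $X$ be the set of the four multigraph edges at $X$. This $4$-graph is connected, has $8\ge k$ vertices, and every degree equals $2>d_0$. But it has only four hyperedges, hence no Berge path with five hyperedges, i.e.\ no Berge-$P_6$. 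This is not special to the boundary case $k=r+2$: for $r=6$, $k=9$ one can build seven $6$-sets on ten vertices with minimum degree $4>7/2$, again too few hyperedges for a Berge-$P_9$.

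The concrete failure is your sentence ``there are many such hyperedges''. Up to $\ell-1$ of the hyperedges through $v_1$ may be defining hyperedges of the path and cannot serve as chords, and $d_0$ does not dominate $\ell-1$ when $k$ is close to $r$. In the example, every longest Berge path uses all four hyperedges, so its endpoints have no chords at all. The last step must therefore use more than the minimum degree. In a minimal counterexample you also know $e(\cH)>\frac{n}{k-1}\binom{k-1}{r}$, and exploiting that excess (or a stronger deletion rule) is exactly the substantive part your sketch leaves open.

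\textbf{Part (ii).} This part is not proved either. Since $(k-2)/(r+1)<1$, single-vertex deletion only removes isolated vertices. Your substitute is to delete a set $S$ of $r+1$ vertices (a hyperedge plus one more vertex) and hope that at most $k-2$ hyperedges meet $S$; this fails in general. Take a sunflower of $m$ hyperedges pairwise meeting exactly in one vertex $c$. It is Berge-$P_4$-free, because the two interior vertices of a Berge path with three hyperedges would both have to equal $c$. Yet every such $S$ contains $c$ and meets all $m$ hyperedges, with $m$ arbitrary. ``Or else obtain the desired structure directly'' does not say what happens in that case, so (ii) needs a genuinely different argument.

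Your reduction to connected hypergraphs and your small-$n$ base cases are fine. The sharpness claims are not addressed, though they are easy: disjoint copies of $K^{(r)}_{k-1}$ for (i), and disjoint $(r+1)$-vertex blocks each carrying $k-2$ hyperedges for (ii).
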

Observe however, that these bounds are sharp only in the case that the above divisibility conditions hold. Gy\H ori, Lemons, Salia and Zamora \cite{GLSZ} showed that ${\rm{ex}}_r(n,{\rm Berge}{\text -}P_{k})=\left\lfloor \frac{n}{r+1}\right\rfloor(k-2)+\mathbf{1}_{r+1\,|\,n+1}$ if $4\le k\le r+1$, where $\mathbf{1}_{r+1\,|\,n+1}=1$ if $r+1\,|\,n+1$, and $\mathbf{1}_{r+1\,|\,n+1}=0$ otherwise. Note that if $k=3$, then ${\rm{ex}}_r(n,{\rm Berge}{\text -}P_3)=\left\lfloor \frac{n}{r}\right\rfloor$. 
Very recently, Cheng, Gerbner, Hama Karim, Miao and Zhou \cite{CGHMZ} determined the exact Tur\'{a}n number of Berge paths for the case where $k\ge r+2$. 

\begin{thm}[Cheng, Gerbner, Hama Karim, Miao and Zhou \cite{CGHMZ}]\label{CGHMZ}
    Let $k\ge r+2$ and $n=p(k-1)+q$ with $q<k-1$. Then $\ex_r(n,\textup{Berge-}P_{k})=p\binom{k-1}{r}+\binom{q}{r}$.
\end{thm}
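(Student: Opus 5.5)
The lower bound comes from a construction: take $p$ vertex-disjoint complete $r$-graphs on $k-1$ vertices and one complete $r$-graph on the remaining $q$ vertices. Every component has at most $k-1$ vertices, and a Berge-$P_k$ has $k$ distinct defining vertices, all lying in one component of the hypergraph. So this hypergraph is Berge-$P_k$-free and has $p\binom{k-1}{r}+\binom{q}{r}$ hyperedges.

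For the upper bound I would argue component by component. Suppose first that every component $C$ of a Berge-$P_k$-free $r$-graph $\cH$ satisfies
\[
e(C)\le f(|C|),
\qquad\text{where}\qquad
f(m)=\binom{m}{r}\ \text{for } m\le k-1 \quad\text{and}\quad f(m)\le \frac{m}{k-1}\binom{k-1}{r}\ \text{for } m\ge k.
\]
Then the total count is bounded by an optimisation over partitions of $n$. Set $g(m)=\binom{m}{r}$ for $m\le k-1$ and $g(m)=\frac{m}{k-1}\binom{k-1}{r}$ for $m\ge k-1$. The ratio $g(m)/m=\binom{m-1}{r-1}/r$ is increasing for $m\le k-1$ and then constant, so $g$ is superadditive-friendly. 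A standard exchange argument then applies: merging two small parts, or moving vertices from a smaller part to a larger part below $k-1$, does not decrease the sum, using convexity of $\binom{m}{r}$. Hence the maximum of $\sum g(m_i)$ with $\sum m_i=n$ is attained by parts of size $k-1$ plus one remainder. A part larger than $k-1$ contributes only linearly, so splitting it into $(k-1)$-blocks plus a remainder does at least as well, again by convexity. This yields $p\binom{k-1}{r}+\binom{q}{r}$.

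The key step, and the main obstacle, is the component bound: a connected Berge-$P_k$-free $r$-graph $C$ on $m\ge k$ vertices has $e(C)\le \frac{m}{k-1}\binom{k-1}{r}$, with some slack. Theorem~\ref{gykl}(i) already gives exactly this linear bound for any $m$ whenever $k\ge r+2>4$, and that suffices for the counting above, since $g$ is defined linearly for $m\ge k-1$. The remaining difficulty is the boundary cases not covered by Theorem~\ref{gykl}, namely $r=2$ and $k=r+2$ with small $r$. For $r=2$ the bound is Theorem~\ref{thm1.1}. For $r=3$, $k=5$, I would either cite the Davoodi et al.\ result included in Theorem~\ref{gykl} or redo the Gy\H{o}ri--Katona--Lemons argument: take a longest Berge path in the component and bound the degrees of its endpoints.

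The real subtlety is the remainder. If $\cH$ has $q<k-1$ leftover vertices distributed among components of size $\ge k$, the linear bound loses nothing relative to $\binom{q}{r}$, because $\frac{q}{k-1}\binom{k-1}{r}\ge\binom{q}{r}$. So I would verify carefully, in the exchange argument, that the configuration "one component of size $m=a(k-1)+q$" gives at most $a\binom{k-1}{r}+\binom{q}{r}$. If the linear bound is too weak here, I would need a strengthened connected bound of Kopylov type (via the Gy\H{o}ri--Salia--Zamora connected result) for large components. I expect this step to be the crux.
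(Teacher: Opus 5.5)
The paper only quotes this theorem from Cheng, Gerbner, Hama Karim, Miao and Zhou and gives no proof, so I judge your proposal on its own. Your lower-bound construction is correct. The upper bound has a genuine gap, and it sits exactly where you suspected.

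\textbf{The optimisation step only recovers the linear bound.} With your $g$, the ratio $g(m)/m$ is maximal, and constant, for all $m\ge k-1$. So the maximum of $\sum_i g(m_i)$ over partitions of $n$ is $\frac{n}{k-1}\binom{k-1}{r}$, attained for instance by a single part of size $n$; you only reprove Theorem~\ref{gykl}(i). For $q\neq 0$ this exceeds the target, since $\frac{q}{k-1}\binom{k-1}{r}=\frac{q}{r}\binom{k-2}{r-1}>\frac{q}{r}\binom{q-1}{r-1}=\binom{q}{r}$. Your claim that splitting a part of size $a(k-1)+q$ into $(k-1)$-blocks plus a remainder ``does at least as well'' is backwards. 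Splitting strictly decreases $\sum g$, so the exchange argument cannot move an arbitrary component structure to the canonical one.

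The same problem arises for $r=2$. Theorem~\ref{thm1.1} is only the linear bound, and the exact graph result (Faudree--Schelp) is itself derived from Kopylov's connected bound. Conversely, for $r\ge 3$ Theorem~\ref{gykl}(i) already covers every $k\ge r+2$, so there are no other boundary cases to worry about.

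\textbf{What is missing is the whole content of the theorem.} You need a bound for a \emph{connected} Berge-$P_k$-free $r$-graph $C$ on $m\ge k$ vertices, valid for \emph{every} such $m$:
$e(C)\le h(m):=\lfloor m/(k-1)\rfloor\binom{k-1}{r}+\binom{m-(k-1)\lfloor m/(k-1)\rfloor}{r}$.
Equivalently, $C$ must fall short of the linear bound by $\frac{q}{k-1}\binom{k-1}{r}-\binom{q}{r}$. Once you have this, superadditivity of $h$, which does follow from convexity of $\binom{x}{r}$, finishes the argument. The requirement is strong already for small components: for $k\le m\le k+r-2$ it says $e(C)\le\binom{k-1}{r}$.

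Your proposed fallback cannot supply this. The Gy\H{o}ri--Salia--Zamora theorem (and Theorem~\ref{thm: con Pk} of this paper) needs $k\ge 2r+14$ (resp.\ $k\ge 2r+2$) and $m$ larger than an unspecified $N_{k,r}$. It therefore says nothing about components of size between $k$ and $N_{k,r}$, nor about $r+2\le k\le 2r+1$, where even the connected extremal number is unknown. You would need a Kopylov-type estimate for connected Berge-$P_k$-free hypergraphs that holds for all $m\ge k$ and all $k\ge r+2$, and the proposal does not provide one.
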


Observe that in the above theorems, the extremal hypergraphs are not connected. Here we say that a hypergraph is \textit{connected} if we cannot partition the vertex set into two parts with no hyperedge containing vertices from both parts.
We denote by $\ex^{conn}_r(n,\cF)$ the maximum number of hyperedges in a connected $r$-graph on $n$ vertices that does not contain any copy of $F$ as a subhypergraph for all $F\in \cF$. Gy\H ori, Methuku, Salia, Tompkins and Vizer \cite{GMSTV} obtained bounds on $\ex^{conn}_r(n,\textup{Berge-}P_{k})$. F\"{u}redi, Kostochka and Luo \cite{FKL2} determined $\ex^{conn}_r(n,\textup{Berge-}P_{k})$ for all sufficiently large $n$ when $k\ge 4r+1\geq 13$. Subsequently, the threshold was improved to $k\ge 2r+14\geq 20$ by Gy\H ori, Salia and Zamora \cite{GSZ}. Gerbner et al. \cite{GNPSV} established a stability result for $\ex^{conn}_r(n,\textup{Berge-}P_{k})$.

\begin{thm}[Gy\H ori, Salia and Zamora \cite{GSZ}]\label{conn-path}
    For all integers $n,k$ and $r$, there exists $N_{k,r}$ such that if $n>N_{k,r}$ and $k\geq 2r+14\geq 20$, then
    $$\ex^{conn}_r(n,\textup{Berge-}P_{k})=\binom{\lf k/2 \rf-1}{r-1}(n-\lf k/2\rf+1)+\binom{\lf k/2\rf-1}{r}+\mathbf{1}_{2|k-1}\binom{\lfloor k/2 \rfloor-1}{r-2}.$$
\end{thm}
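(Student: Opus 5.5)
The statement to be proved is Theorem~\ref{conn-path}, the Gy\H{o}ri--Salia--Zamora formula. I will reduce it to a question about ordinary graphs, for $n$ large.

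\textbf{Lower bound.} Let $\ell=\lfloor k/2\rfloor-1$ and fix an $\ell$-set $A$. Take every $r$-subset of $A$, together with every $r$-set meeting $A$ in exactly $r-1$ vertices and containing one vertex of $B=V\setminus A$. When $k$ is odd, add the $\binom{\ell}{r-2}$ hyperedges consisting of $r-2$ vertices of $A$ and one fixed pair in $B$.

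This hypergraph is connected. Any Berge path alternates between $A$ and $B$ in the sense that consecutive defining vertices in $B$ are impossible, except through the single extra pair. Hence a Berge path has at most $2\ell+1$ defining vertices, or $2\ell+2$ when $k$ is odd, and both counts are less than $k$. Counting hyperedges gives exactly the stated formula.

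\textbf{Upper bound, step 1 (reduce to a graph).} Let $\mathcal H$ be connected and Berge-$P_k$-free with more hyperedges than the formula. By Hall's theorem / greedy matching, choose a maximal set of hyperedges that can be represented by distinct pairs inside them. This gives a graph $G$, where each hyperedge $h$ is either represented by an edge $e_h\subseteq h$, or every pair in $h$ is already used.

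Two facts follow from the Berge structure:
\begin{itemize}
\item $G$ contains no $P_k$, since a path in $G$ with distinct representing hyperedges is a Berge path.
\item Each unrepresented hyperedge spans a clique of $G$. Consequently $e(\mathcal H)\le \sum_{\text{cliques}}$, and more precisely $e(\mathcal H)\le N_r(G)+e(G)$-type bounds. The standard inequality is that the number of hyperedges is at most the number of $K_r$'s plus edges in a suitably chosen $G$, which is a generalized Tur\'an count.
\end{itemize}

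\textbf{Upper bound, step 2 (graph extremal problem).} Make $G$ connected by also representing connections, or work per component while using the connectivity of $\mathcal H$. The task then becomes bounding $\max\, (N_{r}(G)+e(G))$, or a weighted clique count, over connected $P_k$-free graphs $G$. The extremal structure here is Kopylov's: $K_\ell$ joined to an independent set, with one extra edge when $k$ is odd.

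I would apply the Kelmans operation (Ai--Lei--Ning--Shi feasibility). This operation does not increase path length in connected graphs and does not decrease clique counts, so we may assume $G$ is threshold-like. For threshold graphs the longest path is explicit, and the resulting optimization is a finite computation. For $n$ large it gives the formula.

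\textbf{Main obstacle.} The hard step is the accounting between hyperedges and cliques. A single hyperedge whose pairs are all already used could be charged to many cliques, and connectivity of $\mathcal H$ need not transfer to $G$. My first attempt would be to charge each unrepresented hyperedge injectively to an $r$-clique of $G$ containing it, which is valid since such a hyperedge is itself an $r$-clique of $G$. This yields $e(\mathcal H)\le e(G)+N_r(G)$. Since $e(G)=O(n)$ is of lower order only if $r\ge3$, I would then need the careful bound $\max\{N_r(G)+\dots\}$, using the feasibility result precisely with the threshold $k\ge 2r+2$.
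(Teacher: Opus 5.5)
Your lower bound is fine: the construction is exactly $\cH(n,k,r)$, and the alternation argument correctly limits a Berge path to at most $k-1$ defining vertices. The upper bound has two genuine gaps.

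\textbf{The hyperedge-to-clique accounting is too weak.} The bound $e(\cH)\le e(G)+\mathcal{N}(K_r,G)$ cannot give the exact formula. Your remark that $e(G)$ is of lower order is wrong: for a $P_k$-free $G$, both terms are $\Theta(n)$. Set $\ell=\lf k/2\rf-1$. The connected $P_k$-free graph $W(n,k-1,\ell)$ already has $e(G)+\mathcal{N}(K_r,G)\approx(\ell+\binom{\ell}{r-1})n$. This exceeds the target $\binom{\ell}{r-1}n+O(1)$ by about $\ell n$, so no later graph optimization can recover the formula.

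The missing idea is the red--blue refinement (Lemma~\ref{gmp}, Proposition~\ref{prop: matching}). Take a \emph{maximum} matching between pairs and hyperedges. Its alternating-path structure splits $E(G)$ into blue and red edges with $e(\cH)\le e(G_{\mathrm{blue}})+\mathcal{N}(K_r,G_{\mathrm{red}})$, so no edge is counted both as a representative and inside a counted clique. You then need three further steps:
\begin{enumerate}
\item the maximum of this quantity over colorings is still Kelmans-feasible (Proposition~\ref{feas});
\item hence Ai--Lei--Ning--Shi reduces the problem to the graphs $W(n,k-1,s)$;
\item the colorings of $W(n,k-1,s)$ must be optimized (Lemma~\ref{lemmaa}, via Claim~\ref{clm: rb with one vertex}).
\end{enumerate}
The last step is where $\lf k/2\rf-1\ge r$ is used.

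\textbf{Connectivity of $\cH$ is never transferred to $G$.} The graph $G$ need not be connected. For disconnected $P_k$-free graphs, e.g.\ disjoint copies of $K_{k-1}$, the red--blue count can exceed the target by a linear term. So connectivity of $\cH$ must genuinely be used, and ``representing connections'' does not do this. Adding an edge between components may create a $P_k$ in $G$. Also, the hyperedge joining two components may already be matched to an edge inside one of them.

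The paper handles this as follows.
\begin{enumerate}
\item It chooses $G^{rb}$ with the fewest components. Then if a connecting hyperedge $h_1$ is matched to an edge $uu'$ of a component, $uu'$ must be a cut edge. Otherwise, replacing $uu'$ by a blue edge $uv$ with $v\in h_1$ outside the component would reduce the number of components.
\item It classifies components as nice, strong or bad. Using a shortest connecting Berge path and long paths from Lemma~\ref{lem: connect_find_path}, it shows at most one component is nice or strong.
\item It peels the remaining components vertex by vertex. Each deletion costs at most $\binom{\lf k/2\rf-1}{r-1}-1$, so the loss of at least one per vertex gives the bound for large $n$.
\end{enumerate}
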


It is straightforward to verify that $\binom{\lf k/2 \rf-1}{r-1}(n-\lf k/2\rf+1)+\binom{\lf k/2\rf-1}{r}+\mathbf{1}_{2|k-1}\binom{\lfloor k/2 \rfloor-1}{r-2}=\binom{\lf k/2 \rf-1}{r-1}(n-\lc k/2\rc)+\binom{\lc k/2\rc}{r}$. 
Let $\cH(n,k,r)$ denote the following hypergraph. We fix a vertex set $L$ with $|L|=\lf k/2\rf -1$, and add $n-|L|$ vertices. We take as hyperedges all the $r$-sets that contain at least $r-1$ vertices in $L$. When $k$ is even, these are all the hyperedges. When $k$ is odd, we add the $r$-sets containing two fixed vertices $u_1,u_2\notin L$ and $r-2$ vertices in $L$. This is an extremal hypergraph in Theorem \ref{conn-path}.

In this work, we improve the threshold in Theorem \ref{conn-path} to $k\geq 2r+2$. 

\begin{thm}\label{thm: con Pk}
    For all integers $n,k$ and $r$, there exists $N_{r,k}$ such that if $n>N_{r,k}$ and $k\geq 2r+2\geq 8$, then
    $$\ex^{conn}_r(n,\textup{Berge-}P_k)=\binom{\lf k/2 \rf-1}{r-1}(n-\lc k/2\rc)+\binom{\lc k/2 \rc}{r}.$$
\end{thm}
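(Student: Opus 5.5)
The lower bound comes from $\cH(n,k,r)$. It is connected, and it contains no Berge-$P_k$. Every hyperedge meets $L$ in at least $r-1\ge 1$ vertices, except the extra hyperedges through $u_1,u_2$ in the odd case. So in any Berge path, each defining vertex outside $L$ (apart from one use of the pair $u_1u_2$) must be separated from the next one by a vertex of $L$. Hence the path has at most $2|L|+1=2\lfloor k/2\rfloor-1\le k-1$ defining vertices when $k$ is even, and one more when $k$ is odd, which is still $k-1$. Counting the hyperedges gives exactly $\binom{\lfloor k/2\rfloor-1}{r-1}(n-\lceil k/2\rceil)+\binom{\lceil k/2\rceil}{r}$.

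For the upper bound I would follow the reduction to graphs suggested in the abstract. Let $\cH$ be a connected Berge-$P_k$-free $r$-graph with $n$ large. Form an auxiliary bipartite incidence structure and choose a maximum matching between hyperedges and pairs, i.e.\ pick a system of distinct representative pairs $e\subseteq h$. Let $G$ be the graph of the representative pairs and call the unmatched hyperedges \emph{bad}. Then $G$ is $P_k$-free, since a path in $G$ lifts to a Berge path. By the maximality of the matching (a Hall/König type argument, as in Gerbner--Methuku--Palmer), every bad hyperedge $h$ has all $\binom r2$ of its pairs already used in $G$, so $h$ spans a clique $K_r$ in $G$. This gives $e(\cH)\le e(G)+\mathcal N(K_r,G')$, or more precisely a bound of the form $e(\cH)\le \sum$ of weights over edges and $r$-cliques of a connected $P_k$-free graph $G'$ on $V(\cH)$. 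Connectivity passes to $G'$ after adding one representative pair inside each hyperedge that joins components.

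The goal is then a purely graph-theoretic statement. Over connected $n$-vertex $P_k$-free graphs $G$, maximize $f(G)=\mathcal N(K_r,G)+(\text{number of edges not covered by the chosen cliques})$, or some similar mixed count $\sum_{s}\alpha_s\mathcal N(K_s,G)$. The maximum should be attained by the Kopylov-type graph $K_{\lfloor k/2\rfloor-1}\vee \overline{K}_{n-\lfloor k/2\rfloor+1}$, plus an edge $u_1u_2$ when $k$ is odd. For this I would use the Ai--Lei--Ning--Shi framework. Their feasibility theorem states that for a graph parameter that is monotone under the Kelmans operation (and clique counts are), the maximum over connected (or 2-connected) graphs with no long path or cycle is attained by one of the two Kopylov extremal graphs $H(n,k,t)$. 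So one only has to compare $f$ on $H(n,k,1)$-type graphs, namely a big clique $K_{k-2}$ with a pendant structure, against $H(n,k,\lfloor k/2\rfloor-1)$. For large $n$ the second wins, because its count grows like $\binom{\lfloor k/2\rfloor-1}{r-1}n$ while the first grows only linearly with a smaller coefficient.

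I expect the main obstacle to be the accounting that turns $e(\cH)$ into a Kelmans-monotone parameter of $G$ with the exact right constant. A naive bound $e(G)+\mathcal N(K_r,G)$ overcounts, giving an extra $\binom{\lfloor k/2\rfloor-1}{1}n$-type term when $r=3$, say. To avoid this I would assign to each hyperedge a single edge or clique injectively and work with the maximum of $\mathcal N(K_r,G)$ plus the edges outside all $K_r$'s. Alternatively I would use the bound $\max(1,\mathcal N(K_r\ni e))$ and verify Kelmans monotonicity for it. The condition $k\ge 2r+2$ should enter exactly here: we need $\lfloor k/2\rfloor-1\ge r$, so that in the extremal graph every edge lies in an $r$-clique and no extra edge term survives. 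For $k\le 2r+1$ this fails, which matches the sharpness claim.
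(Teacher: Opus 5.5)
Your lower-bound argument for $\cH(n,k,r)$ is correct, and a maximum matching between hyperedges and pairs is the right reduction. But the step ``connectivity passes to $G'$ after adding one representative pair inside each hyperedge that joins components'' is a genuine gap, and it is exactly the difficulty the paper's proof is built around.

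A hyperedge $h$ meeting two components of $G$ cannot be unmatched, since an unmatched hyperedge spans a $K_r$ of $G$ and so lies in one component. Hence $h$ already has its representative $uu'$ inside one component. Adding a second pair $uw\subseteq h$ lets a path of $G'$ use $h$ twice, and such a path no longer lifts to a Berge path: $\dots u'\,u\,w\dots$ only yields the Berge path $\dots u',h,w\dots$, which has one vertex fewer. So $G'$ need not be $P_k$-free, and Theorem \ref{path} cannot be applied to it.

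Replacing $uu'$ by a cross pair does not help in general either: if $uu'$ is a bridge of its component, the number of components does not drop. Nor can you dispense with connectivity. Disjoint copies of $K_{k-1}$ are $P_k$-free and contain $\frac{n}{k-1}\binom{k-1}{r}$ copies of $K_r$; for $k=8$, $r=3$ this is $5n$ against the target $3n+O(1)$. So your argument, as it stands, only recovers the non-connected bound.

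What is missing is a way to use the connectivity of $\cH$ when $G$ is disconnected. The paper proceeds as follows.
\begin{enumerate}
\item It takes $G^{rb}$ with the fewest components. If $G$ is connected, Theorem \ref{path} and Lemma \ref{lemmaa} finish the proof. Note that Ai--Lei--Ning--Shi give the whole family $W(n,k-1,s)$, $1\le s\le t$, not just the two extreme graphs.
\item Otherwise it classifies components as nice, strong or bad, by repeatedly deleting vertices of degree less than $\lfloor k/2\rfloor-1$ and non-cut vertices of $\lfloor k/2\rfloor$-clique blocks.
\item It shows there cannot be two nice or strong components. Take a shortest Berge path in $\cH$ between them and extend it on each side by a path of length at least $\lfloor k/2\rfloor$ from Lemma \ref{lem: connect_find_path}; this gives a Berge-$P_k$. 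The only possible reuse of a hyperedge is $h_1=M(uu')$ with $uu'$ on the inner path. Minimality of the number of components forces $uu'$ to be a bridge, and then a long path avoiding it is found on $u$'s side.
\item Every remaining vertex is peeled off at an average cost of at most $\binom{\lfloor k/2\rfloor-1}{r-1}-1$, by Claim \ref{clm: rb with one vertex}; this is one place where $k\ge 2r+2$ enters. That cost is strictly below the slope of the extremal construction, which gives the bound for large $n$.
\end{enumerate}

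Your accounting concern is settled in the paper by working with $P^*(G)$: the maximum, over red--blue colourings, of the number of blue edges plus red $K_r$'s. This parameter is Kelmans-feasible by Proposition \ref{feas}. Your candidate $\cN(K_r,G)+\#\{\text{edges in no }K_r\}$ would not work, because it is not even an upper bound on $e(\cH)$. For example, $\binom r2$ hyperedges through the distinct pairs of one $r$-set give $G=K_r$ and value $1$.
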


Here, $\cH(n,k,r)$ achieves the above bound.
We show that the bound $k\geq 2r+2$ is sharp for this extremal construction.
Indeed, when $k\in \{2r,2r+1\}$, we have $|L|=r-1$, and thus the longest Berge path in $\cH(n,k,r)$ has $r+1$ vertices if $k=2r$ and $r+2$ vertices if $k=2r+1$. This implies that we can add at least one hyperedge to $\cH(n,k,r)$ that is contained in $V(\cH(n,k,r))\setminus L$, and the longest Berge path in the resulting hypergraph has at most $r+2<2r\leq k$ vertices if $k=2r$ and $r+3<2r+1\leq k$ vertices if $k=2r+1$. When $k=2r-1$, we have $|L|=r-2$, and thus $\cH(n,k,r)$ contains exactly one hyperedge. It is clearly not extremal. 
When $k<2r-1$, we have $|L|=r-2$ if $k=2r-2$ and $|L|<r-2$ if $k<2r-2$, implying that $\cH(n,k,r)$ has no hyperedges. Hence, it is not extremal.



\smallskip

Another closely related problem is forbidding all the Berge cycles of length at least $k$ (clearly, this is a weakening of forbidding a Berge-$P_k$). The largest number of hyperedges under this condition was determined in \cite{FKL,FKL3,EGMSTZ,GLSZ,kolu}. The extremal hypergraph is connected - this is obvious, since we could add a hyperedge connecting the two parts without creating any Berge cycle. However, the extremal hypergraph is not 2-connected.

We call a hypergraph $\cH$ \textit{$2$-connected} if it is connected and has neither cut vertex (i.e., a vertex $v\in V(\cH)$ for which there exists a partition of $V(\cH)=\{v\}\cup V_1\cup V_2$, $|V_i|\geq 1$, such that every hyperedge is contained in either $\{v\}\cup V_1$ or $\{v\}\cup V_2$), nor a cut hyperedge (i.e., a hyperedge $e\in E(\cH)$ for which there is a partition of $V(\cH)=V_1\cup V_2$, $|V_i|\geq 1$, such that every hyperedge $f\neq e$ is contained in either $V_1$ of in $V_2$). F\"{u}redi, Kostochka and Luo \cite{FKL2} gave the value of the maximum number of hyperedges in an $n$-vertex $2$-connected $r$-graph with no Berge cycle of length at least $k$.

\begin{thm}[F\"{u}redi, Kostochka and Luo \cite{FKL2}]\label{conn-cycle}
    For all integers $n$, $k\geq 4r\geq 12$, there exists $N_{r,k}$ such that if $n\geq N_{r,k}$ and $\cH$ is an $n$-vertex $2$-connected $r$-graph with no Berge cycle of length at least $k$, then 
    $$e(\cH)\leq \binom{\lf (k-1)/2 \rf }{r-1}(n-\lc (k+1)/2\rc)+\binom{\lc (k+1)/2\rc}{r}.$$
\end{thm}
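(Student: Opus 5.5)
The plan is to reduce the hypergraph statement to a graph statement, in the spirit of the reduction described in the abstract. Let $\cH$ be a $2$-connected $n$-vertex $r$-graph with no Berge cycle of length at least $k$, and put $t=\lf (k-1)/2\rf$.

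\textbf{Step 1: pass to a graph.} Choose a maximum family of hyperedges $E_1\subseteq E(\cH)$ together with an injective assignment $h\mapsto \{x_h,y_h\}\subseteq h$ of pairs. Let $G$ be the graph on $V(\cH)$ whose edges are these representative pairs, and let $E_2=E(\cH)\setminus E_1$.
\begin{itemize}
\item By maximality, and by an augmenting-path (Hall-type) argument, every pair inside every $h\in E_2$ is an edge of $G$. Hence each hyperedge of $E_2$ spans a copy of $K_r$ in $G$, and $e(\cH)\le e(G)+\cN(K_r,G)$.
\item Any cycle of $G$ lifts, through the distinct representatives, to a Berge cycle of $\cH$ of the same length. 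So $G$ has circumference less than $k$.
\item I would moreover choose the assignment so that the edges of $G$ coming from hyperedges spanning $K_r$'s are used as efficiently as possible. For large $n$ this should give the sharper bound $e(\cH)\le \max_{G'}\big(\cN(K_r,G')+O_{k,r}(1)\big)$, where $G'$ ranges over graphs of circumference less than $k$ with the connectivity inherited below.
\end{itemize}

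\textbf{Step 2: recover $2$-connectivity.} $G$ need not be $2$-connected. I would show, using the $2$-connectivity of $\cH$, that we may add edges inside hyperedges to make $G$ $2$-connected without creating a cycle of length at least $k$. This should follow from rerouting: a long cycle in the augmented graph would use an added pair $\{u,v\}\subseteq h$, and that pair can be replaced by the hyperedge $h$ itself, giving a Berge cycle in $\cH$. The point of this step is to make Kopylov-type structure available: in the spirit of Theorem~\ref{thm1.4}, a $2$-connected graph with circumference less than $k$ and many edges lies in one of the two Kopylov families.

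\textbf{Step 3: maximize $e(G)+\cN(K_r,G)$ by Kelmans operations.} Among $2$-connected graphs of circumference less than $k$, I would apply Kelmans operations. By the work of Ai, Lei, Ning and Shi, these preserve $2$-connectivity and the condition of having no cycle of length at least $k$, and they do not decrease the number of $K_r$'s or of edges. The graph therefore becomes a threshold-like graph. The feasible threshold graphs are then essentially
\begin{itemize}
\item $K_{t}+\overline{K}_{n-t}$, possibly with one extra edge among the outer vertices (the Kopylov family with $t=\lf (k-1)/2\rf$), or
\item $K_{k-2}$ with pendant structure, which contributes only $O(n)$ copies of $K_r$ with a small coefficient.
\end{itemize}
For $n$ large the first family wins. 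Counting its $K_r$'s directly, and adding back the hyperedges of $E_1$ that are not $K_r$'s, should give exactly $\binom{t}{r-1}(n-\lc (k+1)/2\rc)+\binom{\lc (k+1)/2\rc}{r}$.

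\textbf{Main obstacle.} I expect the hardest part to be the bookkeeping between $E_1$ and $E_2$ so as to get the exact constant, rather than the bound $\cN(K_r,G)+e(G)$, which overcounts by about $e(G)\approx tn$. The overcount is negligible only if $\binom{t}{r-1}\gg t$, and this is not automatic when $k$ is close to $2r$. My first attempt would be a weighting: for each $K_r$ of the extremal shape, the representative edge can be taken inside that clique, so $E_1$-hyperedges are also counted as cliques. The remaining deficit would then be handled by a stability argument showing that $\cH$ is close to $\cH(n,k,r)$-type structure, with a large set $L$ of $t$ vertices of linear degree, followed by a direct cleanup argument on $L$.
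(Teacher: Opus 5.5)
Your proposal has two genuine gaps, and they are exactly where the paper's argument does its real work. (The paper proves the stronger Theorem~\ref{thm: 2-conn Bergee cycles}, which covers this statement since $4r\ge 2r+2$.)

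\textbf{First gap: the counting has no slack.} With $t=\lf (k-1)/2\rf$ one has $k-t=\lc (k+1)/2\rc$. So the right-hand side of the theorem is exactly $\cN(K_r,W(n,k,t))$, with nothing to spare beyond cliques. Your bound $e(\cH)\le e(G)+\cN(K_r,G)$ overshoots by about $tn$, a linear term, and cannot give the theorem. Your closing step, ``adding back the hyperedges of $E_1$ that are not $K_r$'s,'' would already exceed the claimed value. The third bullet of Step~1 and the weighting/stability/cleanup remedy are not arguments.

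The missing idea is the red--blue refinement of your maximum matching, obtained via a K\H{o}nig-type cover (Proposition~\ref{prop: matching}). The edges of $G$ can be colored so that each hyperedge is charged either to its own blue edge or to its own red $K_r$. This gives $e(\cH)\le e(G_{\mathrm{blue}})+\cN(K_r,G_{\mathrm{red}})$, while every edge of $G$ is still injectively matched, so $G$ stays $\cC_{\ge k}$-free. The Kelmans machinery is then run on $P^*(G)=\max_{\text{colorings}}\bigl(e(G_{\mathrm{blue}})+\cN(K_r,G_{\mathrm{red}})\bigr)$, which is feasible (Proposition~\ref{feas}). This reduces to colorings of $W(n,k,s)$. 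The per-vertex convexity bound $i+\binom{t-i}{r-1}\le\binom{t}{r-1}$ for $t\ge r$ (Claim~\ref{clm: rb with one vertex}) then shows that the monored $W(n,k,t)$ is optimal (Lemma~\ref{lemmaa}\,\textbf{(ii)}). That is what produces the exact constant.

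\textbf{Second gap: Step~2 fails as argued.} If $h\in E_2$, all its pairs are already edges of $G$, so only hyperedges of $E_1$ supply new pairs. Each such $h$ is already the image of its representative pair. A long cycle in the augmented graph may use an added pair $\{u,v\}\subseteq h$ together with that representative pair, or two added pairs from the same $h$. Replacing $\{u,v\}$ by $h$ then uses $h$ twice and yields no Berge cycle.

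This is precisely the obstruction the paper highlights, and it does not try to make $G$ $2$-connected. Instead it chooses $G^{rb}$ with the fewest components and then the fewest blocks.
\begin{itemize}
\item If $G$ is $2$-connected, it applies Theorem~\ref{thm: ai-2-connect cycles}. Cite that result directly rather than claiming Kelmans operations preserve $2$-connectivity, which is false in general.
\item Otherwise it analyzes leaf blocks. Using two disjoint Berge paths in $\cH$ between two leaf blocks (Lemma~\ref{lem: 2 berge path conn}) and long $x$--$y$ paths inside dense blocks (Lemma~\ref{lem: x,y path}), it shows that at most one leaf block can be nice or strong (or troublesome, for even $k$).
\item The collision case, where a connecting hyperedge equals $M(e)$ for an edge $e$ inside a block $B_1$, is handled by the extremal choice. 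If $B_1-e$ were still $2$-connected, replacing $e$ by a blue edge to a vertex of $M(e)$ outside $B_1$ would reduce the number of blocks. Otherwise $B_1-e$ has a cut edge or cut vertex, and a long Berge cycle is found directly (Claim~\ref{clm: no such edge}).
\item The remaining bad blocks are peeled off with an average loss strictly below $\binom{t}{r-1}$ per deleted vertex, which gives the bound for large $n$.
\end{itemize}
Some block-by-block argument of this kind is needed in place of your Step~2.
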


The hypergraph  $\cH(n,k+1,r)$ achieves the above bound. Here we can extend the above result to all $k\geq 2r+2$. 

\begin{thm}\label{thm: 2-conn Bergee cycles}
    For all integers $n$, $k\geq 2r+2\geq 8$, there exists $N_{r,k}$ such that if $n\geq N_{r,k}$ and $\cH$ is an $n$-vertex $2$-connected $r$-graph with no Berge cycle of length at least $k$, then 
    $$e(\cH)\leq \binom{\lf (k-1)/2 \rf }{r-1}(n-\lc (k+1)/2\rc)+\binom{\lc (k+1)/2\rc}{r}.$$
\end{thm}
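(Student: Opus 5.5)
We reduce Theorem~\ref{thm: 2-conn Bergee cycles} to a problem about ordinary graphs. Fix $\cH$ as in the statement and set $t=\lf (k-1)/2\rf$, so the bound reads $\binom{t}{r-1}(n-\lc (k+1)/2\rc)+\binom{\lc (k+1)/2\rc}{r}$. The standard device is a maximal partial system of distinct representatives. We choose a graph $G$ on $V(\cH)$ together with an injection $\phi$ from a subset $E_1\subseteq E(\cH)$ to $E(G)$ with $\phi(h)\subseteq h$. Among all such pairs we take one with $|E_1|$ maximum, breaking ties by a suitable potential on $G$ (for instance, maximizing the number of $2$-subsets of hyperedges in $E_2=E(\cH)\setminus E_1$ that are already edges of $G$). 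By maximality, every $h\in E_2$ has all $\binom r2$ pairs already in $G$, so each such $h$ spans a clique $K_r$ in $G$. Any cycle of length $\ell\ge k$ in $G$ whose edges are represented by distinct hyperedges gives a Berge cycle of length $\ell$. So $G$ with the representation behaves like a graph with no long cycles, except that edges inside $K_r$'s coming from $E_2$ may be reused.

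The counting will be of the form $e(\cH)=|E_1|+|E_2|\le e(G')+N_r(G'')$, where $G'$ is the subgraph of $\phi$-edges and $G''$ is the union of the cliques spanned by $E_2$. We then want a single graph parameter $f(G)=\alpha\, e(G)+\beta\, N_r(G)$ (or the convex combination used in generalized Tur\'an problems) to be maximized, over $n$-vertex $2$-connected graphs with circumference less than $k$, by the Kopylov-type graph $K_t\vee \overline{K}_{n-t}$ plus the extra clique. Here we invoke the Ai--Lei--Ning--Shi framework mentioned in the abstract. Kopylov's Theorem~\ref{thm1.4} has a generalized form for counting $K_r$'s in $2$-connected graphs with circumference $<k$. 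Their work shows that any parameter that is monotone and does not decrease under the Kelmans operation is maximized by one of the two Kopylov extremal graphs $H(n,k,a)$ with $a\in\{2,t\}$. For large $n$ the $a=t$ graph wins, and its count matches $\binom{t}{r-1}(n-\lc (k+1)/2\rc)+\binom{\lc (k+1)/2\rc}{r}$ exactly, which is the $\cH(n,k+1,r)$ number. The condition $k\ge 2r+2$, i.e.\ $t\ge r$ roughly, is exactly what makes the coefficient $\binom{t}{r-1}$ beat the $a=2$ competitor and beat the trivial contributions of small cliques.

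The steps are as follows. First, show that $2$-connectivity of $\cH$ transfers to a $2$-connected host graph. We may add edges inside hyperedges without creating long Berge cycles that use distinct hyperedges, and we handle cut hyperedges by noting that a hyperedge's pairs form a clique. Second, choose weights so that each hyperedge in $E_1$ contributes to $e(G)$ and each hyperedge in $E_2$ to $N_r(G)$. This gives a bound $e(\cH)\le \max_G \max\{ e(G), N_r(G), \text{mixed}\}$. Third, apply the Kelmans/feasibility theorem to reduce to $H(n,k,a)$. Fourth, compare the values for large $n$.

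The main obstacle is the first two steps together. A long cycle in $G$ may use several edges lying in the same $E_2$-clique, so it need not yield a Berge cycle. We must therefore show that the graph $G$ we count in has no cycle of length $\ge k$, or else modify $G$ so that it has none. My first attempt would be to restrict to hyperedges that are ``private'' and re-route. A cycle reusing a hyperedge $h$ can be shortcut inside the clique on $h$, and we must ensure the shortcut keeps length $\ge k$. Failing that, we can bound the number of $E_2$-hyperedges by the number of $K_r$'s in a graph of circumference $<k$ after deleting a bounded number of vertices, which costs only $O(1)$ when $n$ is large. We then absorb the loss using the gap between the $a=t$ and $a=t-1$ Kopylov values, which is linear in $n$.
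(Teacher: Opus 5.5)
Your skeleton (representative pairs, the Ai--Lei--Ning--Shi Kelmans framework, comparing the graphs $W(n,k,s)$) matches the paper's. However, both load-bearing steps are missing, and the obstacle you single out is not the real one.

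\textbf{The counting step.} Suppose $E(G)=\phi(E_1)$ with $\phi$ injective; this is what lets maximality force each $h\in E_2$ to span a $K_r$ in $G$. Then every cycle of $G$ already has distinct representing hyperedges, so $G$ itself is $\cC_{\ge k}$-free, and nothing inside the $E_2$-cliques is ever reused. The real trouble is that $e(\cH)\le e(G)+\cN(K_r,G'')$ with $G''\subseteq G$ counts every representative edge twice, which is far too weak. On $W(n,k,t)$ it gives about $\bigl(t+\binom{t}{r-1}\bigr)n$, not $\binom{t}{r-1}n$.

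What you need is the alternating-path refinement of the matching (Proposition~\ref{prop: matching}). It splits $E(G)$ into blue and red edges so that hyperedges matched to red pairs are themselves counted as red $K_r$'s, giving $e(\cH)\le e(G_{\textrm{blue}})+\cN(K_r,G_{\textrm{red}})$. The parameter to feed into Ai--Lei--Ning--Shi is then the maximum of this quantity over all red-blue colourings of $G$. Its feasibility must be proved (Proposition~\ref{feas}), and the colouring analysis of Lemma~\ref{lemmaa}(ii) is needed over all the $W(n,k,s)$, not just two Kopylov graphs. A fixed combination $\alpha e(G)+\beta\cN(K_r,G)$, or an unspecified ``$\max\{e,N_r,\text{mixed}\}$'', does not produce this bound.

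\textbf{The connectivity step.} This is where almost all of the paper's work lies: $G$ need not be $2$-connected, or even connected, although $\cH$ is. Your Step 1 is false. If you add a pair inside a hyperedge $h$ whose representative $\phi(h)$ is already an edge, a cycle can use two pairs of the same $h$. The enlarged graph can then have long cycles that are not Berge cycles, so it is no longer $\cC_{\ge k}$-free.

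Nor can the defect be absorbed as an $O(1)$ loss. A graph of circumference $<k$ that is not $2$-connected can have far more $r$-cliques per vertex than $\binom{t}{r-1}$. For example, a chain of copies of $K_{k-1}$ glued at cut vertices has about $\binom{k-1}{r}/(k-2)$ cliques per vertex, which is $35/6>3$ for $r=3$, $k=8$. Moreover, the vertices lying outside a single rich block need not be boundedly many.

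The paper overcomes this in several steps:
\begin{enumerate}
\item It chooses $G^{rb}$ with the fewest components, then the fewest blocks. So if a hyperedge leaving a leaf block $B$ is the partner of an edge $u_1u_2$ inside $B$, swapping $u_1u_2$ for a blue edge leaving $B$ shows that $B-u_1u_2$ is not $2$-connected.
\item It classifies leaf blocks as nice, strong, bad or troublesome.
\item Using two disjoint Berge paths (Lemma~\ref{lem: 2 berge path conn}), long $(x,y)$-paths (Lemma~\ref{lem: x,y path}) and the F\"uredi--Kostochka--Verstra\"ete stability theorems, it shows that at most one leaf block is rich.
\item It proves that every vertex removed with the bad blocks costs at most $\binom{t}{r-1}-\delta$ on average.
\end{enumerate}
Without an argument of this kind that genuinely uses the $2$-connectivity of $\cH$, your proposal does not prove the theorem.
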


We remark that the bound $k\geq 2r+2$ is sharp, since $\cH(n,k+1,r)$ is not $2$-connected for $k\leq2r+1$.

This paper is organized as follows. In Section \ref{2}, we describe the connection of our problem to generalized Tur\'{a}n problems and Kelmans operation.
We give the proof of Theorem \ref{thm: con Pk} in Section \ref{3} and the proof of Theorem \ref{thm: 2-conn Bergee cycles} in Section \ref{4}.

\section{Preliminaries}\label{2}
For a graph $G$ and a subset of vertices $A\subseteq V(G)$, let $G-A$ denote the subgraph of $G$ induced by $V(G)\setminus A$.
An often-used tool in the study of the Tur\'{a}n number of Berge hypergraphs is its connection to generalized Tur\'{a}n problems. Given two graphs $H$ and $G$, we denote by $\cN(H,G)$ the number of copies of $H$ contained in $G$ as subgraphs. For graphs $H$ and $F$, let $\ex(n,H,F)$ denote the maximum value of $\cN(H,G)$, where $G$ is an $n$-vertex $F$-free graph. Such problems are simply called \textit{generalized Tur\'{a}n problems} and have attracted a lot of attention recently, see \cite{GePa} for a survey. 

The connection between Tur\'{a}n problems for Berge hypergraphs and generalized Tur\'{a}n problems has been established by Gerbner and Palmer \cite{GP2} by showing the simple bounds $\ex(n,K_r,F)\leq {\rm{ex}}_r(n,{\rm Berge}{\text -}F)\leq \ex(n,K_r,F)+\ex(n,F)$. Later, a stronger upper bound was obtained by F\"{u}redi, Kostochka, and Luo \cite{FKL} and independently by Gerbner, Methuku and Palmer \cite{B4}. 
Recently, Zhao et al. \cite{ZYWBZ} generalized their results to the graph family $\cF$. We consider an $\cF$-free graph $G$, and obtain a red-blue graph $G^{rb}$ by coloring each edge red or blue. Let $G_{\textrm{red}}$ denote the subgraph consisting of the red edges and $G_{\textrm{blue}}$ denote the subgraph consisting of the blue edges. We let $g_r(G^{rb}):=e(G_{\textrm{blue}}) + \mathcal{N}(K_r,G_{\textrm{red}})$.

\begin{lemma}[F\"{u}redi, Kostochka, Luo \cite{FKL}, Gerbner, Methuku, Palmer \cite{B4}, Zhao et al. \cite{ZYWBZ}]\label{gmp}
	Let $\mathcal{H}$ be a Berge-$\cF$-free $r$-graph. Then we can construct an $\cF$-free red-blue graph $G^{rb}$ 
    such that
	\[e(\mathcal{H}) \leq g_r(G^{rb}).\]
\end{lemma}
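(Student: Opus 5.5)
The plan is to obtain the red-blue graph from a maximum matching in an auxiliary bipartite incidence graph. Red edges will come from the hyperedges that are "crowded" with respect to this matching, and blue edges from the rest. Let $B$ be the bipartite graph with one side $E(\cH)$ and the other side the set of pairs $\{u,v\}\subseteq V(\cH)$, where $h\in E(\cH)$ is joined to $uv$ whenever $\{u,v\}\subseteq h$. Fix a maximum matching $M$ of $B$, and let $U\subseteq E(\cH)$ be the set of hyperedges left unmatched by $M$.

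\textbf{Step 1: the reachable sets.} Let $R\subseteq E(\cH)$ be the set of hyperedges reachable from $U$ by $M$-alternating paths. Such a path starts at a vertex of $U$, moves to a pair along a non-matching edge, and returns to a hyperedge along a matching edge; $R$ includes $U$ itself. Let $P$ be the set of pairs reached along these paths.

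Every pair in $P$ is $M$-matched, since otherwise we would have an $M$-augmenting path, contradicting the maximality of $M$. Moreover, the partner of such a pair lies in $R$, because that partner is reached by continuing the alternating path along the matching edge. By construction, if $h\in R$ then every pair contained in $h$ lies in $P$. Conversely, every hyperedge of $E(\cH)\setminus R$ is matched to a pair outside $P$.

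\textbf{Step 2: the coloring.} Define $G$ to have the edge set $P\cup\{M(h): h\in E(\cH)\setminus R\}$. Color the edges of $P$ red and the remaining edges blue; these two sets are disjoint by Step 1.

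Each $h\in R$ spans an $r$-clique in $G_{\textrm{red}}$, since all pairs inside $h$ lie in $P$. Distinct hyperedges are distinct $r$-sets, so they give distinct cliques, and hence $|R|\le \cN(K_r,G_{\textrm{red}})$. In addition, $h\mapsto M(h)$ is injective on $E(\cH)\setminus R$, which gives $|E(\cH)\setminus R|\le e(G_{\textrm{blue}})$. Adding these, $e(\cH)\le g_r(G^{rb})$.

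\textbf{Step 3: $G$ is $\cF$-free.} Every edge of $G$ is $M$-matched to a hyperedge containing it:
\begin{itemize}
\item red edges, because pairs in $P$ are matched (Step 1);
\item blue edges, by their definition.
\end{itemize}
Since $M$ is a matching, distinct edges of $G$ receive distinct hyperedges. Therefore any copy of some $F\in\cF$ in $G$ yields, via $M$, a bijection onto hyperedges $\phi(e)\supseteq e$, that is, a Berge-$F$ in $\cH$, which is a contradiction.

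The main obstacle is Step 1. We must check that the alternating-path closure really forces every pair inside a hyperedge of $R$ to be matched within $R$, while the pairs matched to hyperedges outside $R$ avoid $P$. This is the standard König-type argument, and it is where the maximality of $M$ is used.
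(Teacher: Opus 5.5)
Your proposal is correct and is essentially the argument the paper gives for the stronger Proposition \ref{prop: matching}, following Gerbner--Methuku--Palmer: take a maximum matching between pairs and hyperedges, colour red the pairs reachable by alternating paths from unmatched hyperedges (so those hyperedges become red $K_r$'s), and colour blue the matched pairs of the remaining hyperedges. Your version drops the paper's unnecessary split of the remaining pairs by reachability from unmatched pairs ($A_3$ versus $A_4$), and it rightly insists on a maximum rather than merely maximal matching, since that is exactly what rules out unmatched pairs in $P$.
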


We need an additional property of $G^{rb}$, which follows readily from the proof of Lemma \ref{gmp} in \cite{B4} (see also Lemma 2.9 in \cite{ZYWBZ}). We present a proof that is essentially the proof of Lemma \ref{gmp} in \cite{B4}, for the sake of completeness. Suppose $\cH$ is a Berge-$\cF$-free $r$-graph. We consider an \textit{auxiliary bipartite graph} $H$, with part $A$ being the 2-sets of vertices in $V(\cH)$, part $B$ being the set of hyperedges in $\cH$, and $a\in A$ is joined to $b\in B$ with an edge if $b$ contains $a$.

\begin{proposition}\label{prop: matching}
    Let $\cH$ be a Berge-$\cF$-free $r$-graph. Then we can construct a $\cF$-free red-blue graph $G^{rb}$ 
    such that there is a matching $M$ between $E(G)$ and $E(\cH)$ in the auxiliary bipartite graph $H$ such that $M$ covers $E(G)$, and
	\[e(\mathcal{H}) \leq g_r(G^{rb}).\] 
    Moreover, the vertex set of $G$ is $V(\cH)$, and each hyperedge of $\cH$ contains either a blue edge or a red $K_r$ of $G$.
\end{proposition}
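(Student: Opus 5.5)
We will follow the matching argument of Gerbner, Methuku and Palmer, keeping track of the matching explicitly. The plan is to choose a largest matching in the auxiliary bipartite graph $H$ and colour the matched pairs blue, then use the maximality of the matching, through alternating paths, to handle the hyperedges that remain unmatched.

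Among all matchings in $H$ between pairs of vertices of $V(\cH)$ and hyperedges of $\cH$, take one, $M_0$, of maximum size. Let $A_0\subseteq A$ be the matched $2$-sets and $B_0\subseteq B$ the matched hyperedges. Colour every pair in $A_0$ blue. Let $B_1=B\setminus B_0$ be the unmatched hyperedges.

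For each $b\in B_1$, every $2$-subset of $b$ must be matched in $M_0$; otherwise we could add an edge to $M_0$. Following \cite{B4}, we enlarge the red part by alternating paths. Let $A_1$ be the set of $2$-sets reachable from $B_1$ by $M_0$-alternating paths, meaning a non-matching edge from a hyperedge to a $2$-set, followed by a matching edge to a hyperedge, and so on. Maximality of $M_0$ implies that every element of $A_1$ is matched, since otherwise there would be an augmenting path. Let $B_2$ be the set of hyperedges matched to $A_1$. We recolour the pairs in $A_1$ red and keep the pairs in $A_0\setminus A_1$ blue. Then $G$ consists of all pairs in $A_0$, and $M:=M_0$ covers $E(G)$, as required. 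Blue edges are matched to hyperedges in $B_0\setminus B_2$, so each such hyperedge contains its blue edge. Every hyperedge $b\in B_1\cup B_2$ has all of its $2$-subsets in $A_1$ by the definition of reachability, so $b$ spans a red $K_r$. The vertex set of $G$ is declared to be $V(\cH)$.

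For the counting bound, the map $b\mapsto$ (its matched blue edge) is injective on $B_0\setminus B_2$. The map from $B_1\cup B_2$ to red $r$-cliques, sending $b$ to the clique spanned by $b$, is injective because distinct hyperedges are distinct $r$-sets. Hence $e(\cH)\le e(G_{\mathrm{blue}})+\cN(K_r,G_{\mathrm{red}})$.

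For $\cF$-freeness, suppose $G$ contains a copy of some $F\in\cF$. The matching $M$ assigns distinct hyperedges to its edges, each containing the corresponding pair, and this gives a Berge-$F$ in $\cH$, which is a contradiction.

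The main obstacle is the alternating-path step. We must check that $A_1$ really consists of matched pairs (this uses maximality), and that hyperedges in $B_2$ have all of their pairs inside $A_1$ (this uses closure of reachability). Everything else is bookkeeping.
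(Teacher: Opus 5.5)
Your proof is correct and follows essentially the same maximum-matching/alternating-path (K\H{o}nig-type) argument as the paper, which is the Gerbner--Methuku--Palmer proof: the matched pairs form $G$, closure under alternating paths decides which pairs are red, hyperedges in that closure are charged to red copies of $K_r$, and every other hyperedge is charged to its matched blue pair. The only difference is bookkeeping. The paper also follows alternating paths from unmatched pairs and splits $A$ and $B$ into four classes, while your two-class version, using reachability from unmatched hyperedges only, already suffices; you also rightly take a maximum rather than merely maximal matching, which the no-augmenting-path step requires.
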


\begin{proof}
    Let us consider an arbitrary maximal matching $M_0$ in $H$. Let $A_1\subset A$ and $B_1\subset B$ be the sets of vertices not incident to $M$. Then there are no edges between $A_1$ and $B_1$. An \textit{alternating path} in $H$ is a path
that alternates between edges in $M$ and edges not in $M_0$ (beginning with an edge of $M_0$). It is well-known and easy to see that there is no alternating path from $A_1$ to $A\setminus A_1$ and from $B_1$ to $B\setminus B_1$.

Let $B_2\subset B$ be the set of vertices that we can reach from $B_1$ by an alternating path, and $A_2$ be the set of vertices matched to vertices of $B_2$, then from $A_2$, all the edges go to $B_2$. Similarly, let $A_3\subset A$ be the set of vertices that we can reach from $A_1$ by an alternating path, and $B_3$ be the set of vertices matched to vertices of $A_3$, then from $B_3$, all the edges go to $A_3$. Finally, let $A_4$ and $B_4$ denote the rest of the vertices.

Let us color the edges in $A_2$ blue and the edges in $A_3\cup A_4$ red.
Then the number of hyperedges is $e(\cH)=|B_1|+|B_2|+|B_3|+|B_4|=|B_1|+|A_2|+|B_3|+|B_4|=e(G_{\textrm{red}})+|B_1|+|B_3|+|B_4|$. The vertices in $B_1\cup B_3\cup B_4$ are only adjacent to vertices in $A_3\cup A_4$ in $H$. This means that they correspond to blue cliques in $G$, thus $|B_1|+|B_3|+|B_4|\le \mathcal{N}(K_r,G_{\textrm{blue}})$.
\end{proof}



We now introduce the \textit{Kelmans operation}. Given a graph $G$ and two vertices $u,v$, we obtain $G[u\rightarrow v]$ in the following way. For each vertex $x$ that is adjacent to $u$ but not $v$, we delete $ux$ and add $vx$.

A \textit{graph parameter} is a function that maps each graph to a real number. 
We say that a graph parameter $P$ is \textit{feasible} if for any $u,v$ we have $P(G)\le P(G[u\rightarrow v])$ and $P(G)<P(G+e)$ for any $e\not\in E(G)$ but $V(e)\cap V(G)\neq \emptyset$. We say that $P$ is \textit{weakly feasible} if $P(G)<P(G+e)$ is replaced by $P(G)\le P(G+e)$.

The main results of Ai, Lei, Ning and Shi \cite{alns} are the following. Let $W(n,k,s)=K_s\vee [(n-k+s)K_1\cup K_{k-2s}]$, and let $X=V(K_s)$, $Y=V(K_{k-2s})$ and $Z=V((n-k+s)K_1)$. 

\begin{thm}[Ai, Lei, Ning and Shi \cite{alns}]\label{path}
    Let $n\ge k\ge 4$ and let $t=\lfloor k/2\rfloor-1$. Let $G$ be a connected $n$-vertex $P_k$-free graph. If $P$ is weakly feasible, then $P(G)\le \max\{P(W(n,k-1,s): 1\le s\le t\}$. Moreover, if $P$ is feasible, then each connected $n$-vertex $P_k$-free graph $G$ with the maximum $P(G)$ is equal to $W(n,k-1,s)$ for some $1\le s\le t$.
\end{thm}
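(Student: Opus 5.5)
The plan is to take a connected $n$-vertex $P_k$-free graph $G$ that maximizes $P$, and to push it toward the family $\{W(n,k-1,s):1\le s\le t\}$ using only two kinds of moves: adding edges, and Kelmans operations $G[u\rightarrow v]$. By weak feasibility, neither move decreases $P$. Throughout, each move must keep the graph connected and $P_k$-free. Under these constraints I may first assume that $G$ is \emph{saturated}: adding any non-edge either creates a $P_k$ or is impossible. Since every non-edge of a connected graph meets $V(G)$, in the feasible case a non-saturated $G$ would not be a maximizer.

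\textbf{Step 1 (structure of a saturated graph).} Let $Q=x_1\dots x_m$ be a longest path of $G$, so $m\le k-1$. I will run a Kopylov/P\'osa rotation argument on $Q$. Both endpoints have all their neighbours on $Q$. Saturation forces the endpoint sets reachable by rotations to be cliques, and their neighbourhoods to be nested. From this I expect the standard dichotomy. Either $G$ contains a clique of order close to $k-1$ with pendant attachments through a single vertex, which is the case $s=1$. Or there is a set $X$ of $s\le t$ vertices on $Q$ such that every component of $G-X$ off a central part is small and attaches only to $X$. The separation set $X$ plays the role of $V(K_s)$, the central part plays the role of $Y$, and the remaining components play the role of $Z$.

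\textbf{Step 2 (Kelmans shifting).} Inside each component of $G-X$ outside the central part, choose a vertex $v$. Repeatedly apply $G[u\rightarrow v]$ for the other vertices $u$ of that component, and also shift neighbourhoods in $X$ toward a fixed vertex. This should collapse all these components into isolated vertices joined completely to $X$, after further edge additions. Each single operation must be checked for two things. First, connectivity is preserved; this holds because I always shift toward a vertex adjacent to, or sharing the separating set with, $u$. Second, no $P_k$ is created; here I would invoke the fact that a Kelmans operation does not increase the length of a longest path, relative to a fixed vertex set. Afterwards I saturate again. The graph then has the form $K_s\vee[(n-k+1+s)K_1\cup K_{k-1-2s}]$, which is $W(n,k-1,s)$, so $P(G)\le P(W(n,k-1,s))$. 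For the feasible case, strictness of edge additions shows that any maximizer must already be saturated. I would also verify that a nontrivial shift which is not followed by a strict edge addition cannot occur, so the maximizer itself equals some $W(n,k-1,s)$.

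\textbf{Main obstacle.} I expect the difficulty to lie in the longest-path monotonicity under the Kelmans operation when connectivity must also be retained. The operation $G[u\rightarrow v]$ can disconnect $u$ if $N(u)\subseteq N(v)\cup\{v\}$ fails badly. In that case I would first attempt to apply the operation only with $u,v$ adjacent, or lying in a common block with $X$. If that fails, I would redo Step 1 to obtain a cleaner decomposition in which every shift is between twins modulo one component.
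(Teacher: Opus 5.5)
The paper does not prove this statement; it is quoted from \cite{alns}. So your proposal has to stand on its own, and it has a genuine gap. Your rule that every move (edge addition or Kelmans operation) must keep the graph connected and $P_k$-free cannot in general reach a subgraph of any $W(n,k-1,s)$. Moreover, the fact you plan to invoke, that a Kelmans operation never lengthens a longest path, is false.

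Take $k=6$, $n=7$, and let $F$ be three triangles $wa_1a_2$, $wb_1b_2$, $wc_1c_2$ sharing $w$. Then $F$ is connected and $P_6$-free, since a path meets $w$ at most once. It is also saturated: adding $a_ib_j$ creates the path $a_{3-i}a_ib_jb_{3-j}wc_1$.

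Your Step~2 does nothing here. With $X=\{w\}$, the vertices of each component of $F-w$ are closed twins, so $F[u\rightarrow v]=F$. The same happens for every clique component whose vertices have equal neighbourhoods in $X$, e.g.\ in the windmills $K_1\vee qK_{k/2-1}$. Also $F[u\rightarrow w]=F$ for every rim vertex $u$. Up to symmetry, the only nontrivial Kelmans operations on $F$ are these two:
\begin{itemize}
\item $F[w\rightarrow a_1]$, which is again a copy of $F$;
\item $F[a_1\rightarrow b_1]$, which does preserve connectivity but contains the $P_6$ $a_2b_1b_2wc_1c_2$.
\end{itemize}
So your procedure never leaves the isomorphism class of $F$. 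Yet $F$ is a spanning subgraph of neither target:
\begin{itemize}
\item not of $W(7,5,1)$, which has three vertices of degree $1$, while $\delta(F)=2$;
\item not of $W(7,5,2)=K_2\vee\overline{K_5}$, which has an independent set of size $5$, while $\alpha(F)=3$.
\end{itemize}

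The theorem does hold for $F$. With $F_1=F[b_1\rightarrow a_1]$, the graph $F_1[c_1\rightarrow a_1]$ is a spanning subgraph of $W(7,5,2)$ with core $\{w,a_1\}$. But the intermediate graph $F_1$ contains the $P_6$ $a_2a_1b_2wc_1c_2$, and $s$ jumps from $1$ to $2$. Hence the shifts must be chosen from global structural information, and only the final graph needs to lie in some $W(n,k-1,s')$ with $s'\le t$. Step-by-step preservation of $P_k$-freeness is simply not available. (The same example shows that the unrestricted preservation statement recorded in Section~2 needs extra hypotheses on $u,v$.)

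Independently of this, Step~1 is only asserted, not proved. What you need is a set $X$ with $|X|=s\le t$, components of the right sizes, and a central part that can be completed to $K_{k-1-2s}$. That is the whole structural content of the Kopylov / Balister--Gy\H{o}ri--Lehel--Schelp analysis, and ``saturation plus rotations'' is not shown to yield it.

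The ``moreover'' part is also not established. Weak monotonicity only tells you that the final shifted graph is some $W(n,k-1,s)$, not that $G$ itself is. For example, $P(G)=e(G)$ is feasible and unchanged by every Kelmans operation. For this $P$ the uniqueness claim is exactly the characterization of the extremal graphs. Your sentence about excluding a nontrivial shift not followed by a strict edge addition assumes that characterization rather than proving it.
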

\begin{thm}[Ai, Lei, Ning and Shi \cite{alns}]\label{thm: ai-2-connect cycles}
    Let $n\ge k\ge 5$ and let $t=\lfloor (k-1)/2\rfloor$. Let $G$ be a $2$-connected $n$-vertex $\cC_{\ge k}$-free graph. If $P$ is weakly feasible, then $P(G)\le \max\{P(W(n,k,s): 1\le s\le t\}$. Moreover, if $P$ is feasible, then each $2$-connected $n$-vertex $\cC_{\ge k}$-free graph $G$ with the maximum $P(G)$ is equal to $W(n,k,s)$ for some $2\le s\le t$.
\end{thm}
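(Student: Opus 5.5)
The approach is shifting. Starting from a $2$-connected $n$-vertex $\cC_{\ge k}$-free graph $G$, I would transform it by a finite sequence of operations, each of which keeps the graph $2$-connected and $\cC_{\ge k}$-free. The operations are of two kinds: adding an edge, and a Kelmans operation $[u\rightarrow v]$. The sequence should end at a graph $W(n,k,s)$ with $1\le s\le t$. Since $P$ is weakly feasible, $P$ is non-decreasing along every step, so $P(G)\le P(W(n,k,s))$ and the first assertion follows. For the second assertion, let $G$ be a maximizer and $P$ feasible. If some step of the sequence is an edge addition, then $P$ strictly increases at that step, contradicting maximality. A Kelmans operation never changes the number of edges. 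Hence no edges are added at all, and $G$ has the same number of edges as the final $W(n,k,s)$. Finally, $W(n,k,1)$ has a cut vertex, so it is excluded for $2$-connected graphs and $s\ge 2$. The remaining point is to upgrade ``same edge count, reached by Kelmans shifts'' to $G\cong W(n,k,s)$. I expect this to follow by checking that any Kelmans operation landing on $W(n,k,s)$ from a non-isomorphic graph would create a cycle of length at least $k$ or a cut vertex. This step needs care; it is the one place where I am not yet sure the argument closes.

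First I would saturate. Add edges to $G$ as long as the result stays $\cC_{\ge k}$-free; $2$-connectivity is preserved automatically by adding edges. So we may assume $G$ is edge-maximal: adding any non-edge $xy$ creates a cycle of length at least $k$. In particular every nonadjacent pair is joined by a path of length at least $k-1$.

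Next I would use Kopylov's disintegration. Repeatedly delete vertices of degree at most $t$, and call what remains the core $C$. I would reuse Kopylov's argument behind Theorem~\ref{thm1.4}, which exploits the long paths between non-adjacent pairs in the saturated graph. It should show that either $C=\emptyset$, or $C$ is a clique of size $k-s$ for some $s\le t$, with every vertex outside $C$ having at most $s$ neighbours inside. In the second case I would apply Kelmans operations $[u\rightarrow v]$ with $v$ in a fixed $s$-subset $X\subseteq C$. These push all edges from $V\setminus C$ onto $X$ and make $V\setminus C$ independent. The key claim is that a Kelmans operation $[u\rightarrow v]$ between such vertices does not increase the circumference. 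Any long cycle in $G[u\rightarrow v]$ can be rerouted in $G$ through $u$ or $v$, by the standard argument that if both $u$ and $v$ appear on the cycle we swap their roles. By saturation, the resulting graph is then exactly $W(n,k,s)$.

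The main obstacle is preserving $2$-connectivity under the Kelmans operations, since $G[u\rightarrow v]$ may acquire a cut vertex. To handle this, I would first make $N(u)\cup\{u\}$ and $N(v)\cup\{v\}$ comparable relative to $X$. I would then perform the shifts only toward vertices of $X$, which is a clique with $|X|=s\ge2$ when the core is nonempty. This way every vertex outside $C$ keeps at least two neighbours in $X$, so no cut vertex can arise. In the degenerate case $C=\emptyset$, every vertex was removed at degree at most $t$. The number of edges is then at most $t(n-\cdot)+\binom{\cdot}{2}$, and the same shifting onto the $t$ vertices of largest degree should reach $W(n,k,t)$.
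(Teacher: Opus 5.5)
The paper does not prove this statement. It is imported from \cite{alns} and used as a black box, so your sketch has to stand on its own, and as written it has genuine gaps.

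The load-bearing claim is that a shift $[u\rightarrow v]$ with $u\notin C$, $v\in X$ does not increase the circumference. Your swap argument covers a long cycle of $G[u\rightarrow v]$ that passes through both $u$ and $v$. It also covers a cycle through $v$ along two transferred edges, where you replace $v$ by $u$. It does not cover a cycle $v\,y\cdots x\,v$ that avoids $u$ and uses exactly one transferred edge $vx$ (with $x\in N(u)\setminus N[v]$) together with an old edge $vy$. In $G$ you then only have the path $u\,x\cdots y\,v$. This closes into a cycle only if $uv\in E(G)$, or if $u$ and $v$ have a common neighbour off the cycle.

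This case genuinely fails, even for $2$-connected graphs. Take the path $p_0p_1\cdots p_8$ together with the edges $vp_0,vp_4,up_4,up_8,p_1p_7$. This graph is $2$-connected and has circumference $7$. Yet $G[u\rightarrow v]$ contains the $10$-cycle $vp_0p_1\cdots p_8v$. So any proof of your restricted claim must use the structure of $C$ and $X$, and you do not use it. (The unrestricted preservation statement paraphrased in Section~2 of the paper is likewise too strong.)

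If you only ever shift $u$ toward a \emph{neighbour} $v$, the path closes through the edge $uv$ and every case goes through. But you would then have to show that such restricted shifts still reach $W(n,k,s)$ while preserving $2$-connectivity. Your remedy for $2$-connectivity (``comparable relative to $X$'', ``keeps two neighbours in $X$'') is only gestured at.

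The structural input is also too weak for what you need from it. Kopylov's disintegration gives, at best, two facts about a saturated $G$: a nonempty core is a clique $K_{k-s}$, and each outside vertex had at most $s$ neighbours among the surviving vertices when it was deleted. It does not say that the outside vertices attach to one common $s$-set $X$. It does not say that each has two neighbours in $X$, or even that each has any neighbour in $C$.

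A shift $[u\rightarrow v]$ with $v\in X$ never moves an edge from $u$ to $C\setminus X$, nor an edge $uu'$ with $u'\sim v$. So your claim that ``the shifts make $V\setminus C$ independent and the result is exactly $W(n,k,s)$'' needs precisely the long-cycle case analysis that is the real content of the theorem. The empty-core case is only asserted.

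Finally, the uniqueness part is open, as you say. Kelmans moves preserve the edge count, so having the same number of edges as $W(n,k,s)$ gives you nothing by itself. You would need to show that every $2$-connected $\cC_{\ge k}$-free Kelmans preimage of $W(n,k,s)$ is isomorphic to it. That argument also requires every intermediate graph in your chain to stay in the class, which is exactly what the first gap leaves unproved.
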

In this paper, we build a connection between Kelmans' operation and Berge-Tur\'an problems.
Let $P(G)$ be a graph parameter. Given a red blue graph $G^{rb}$, we let $P^{rb}(G^{rb})=P(G_{\textrm{red}})+|E(G_{\textrm{blue}})|$. We define a colored version of the Kelmans operation. Given a red-blue graph $G^{rb}$ and two vertices $u,v$ of $G$, we let $G^{rb}[u\rightarrow_{rb} v]$ denote the following graph. For each vertex $x\neq u,v$, if $x$ is joined to $u$ but not $v$, then replace the edge $ux$ by the edge $vx$ of the same color. If $x$ is joined to $u$ with a red edge and to $v$ with a blue edge, then we exchange the colors of the edges $ux,vx$, i.e., $ux$ becomes blue, and $vx$ becomes red.

In \cite{alns}, the authors proved that if $G$ is $P_k$-free or $\cC_{\ge k}$-free, then $G[u\rightarrow v]$ is also $P_k$-free or $\cC_{\ge k}$-free, respectively.
Since we execute the ordinary Kelmans operation on the underlying graph, if $G$ is $P_k$-free or $\cC_{\ge k}$-free, then $G^{rb}[u\rightarrow_{rb} v]$ is also $P_k$-free or $\cC_{\ge k}$-free, respectively. 
Observe that on $G$ and on the red graph $G_{\textrm{red}}$, we executed the ordinary Kelmans operation, and the number of blue edges does not change. This implies the following. 


\begin{prop}
    Let $P(G)$ be a graph parameter and $u,v\in V(G)$. Then $P^{rb}(G^{rb}[u\rightarrow_{rb} v])=P(G_{\textrm{red}}[u\rightarrow v])+|E(G_{\textrm{blue}})|$.
\end{prop}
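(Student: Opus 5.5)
The plan is to compare, vertex by vertex, the red and blue graphs of $G^{rb}[u\rightarrow_{rb} v]$ with those of $G^{rb}$. Once this is done, the identity follows by applying $P$ to the red part and counting the blue part. Fix $u,v$. Every edge not incident to $u$ or $v$ is untouched and keeps its color, and so is the edge $uv$ if present. It therefore suffices to analyse, for each $x\neq u,v$, the pair of potential edges $ux$ and $vx$. Let $R$ and $B$ denote the red and blue graphs of $G^{rb}$, and let $R'$ and $B'$ be those of $G^{rb}[u\rightarrow_{rb} v]$. We will show that $R'=R[u\rightarrow v]$ and $|E(B')|=|E(B)|$. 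Substituting these into the definition $P^{rb}(G^{rb}[u\rightarrow_{rb} v])=P(R')+|E(B')|$ then gives the claim.

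The key step is a case analysis on the colors of $ux$ and $vx$ in $G$.
\begin{enumerate}
\item[(a)] Only $ux$ is present, with color $c$. The colored operation replaces it by $vx$ of color $c$. If $c$ is red, then in $R$ the vertex $x$ is adjacent to $u$ but not to $v$, so the ordinary Kelmans operation on $R$ performs the same move. If $c$ is blue, then $x$ is not adjacent to $u$ in $R$, so $R$ is unchanged at $x$, which is again consistent.
\item[(b)] Both edges are present, $ux$ red and $vx$ blue. The colors are swapped, so afterwards $vx$ is red and $ux$ is blue. In $R$ the vertex $x$ is adjacent to $u$ but not to $v$, so $R[u\rightarrow v]$ deletes $ux$ and adds $vx$. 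This matches the red graph after the swap.
\item[(c)] In all remaining configurations nothing changes. These are: both edges red, both blue, $ux$ blue and $vx$ red, only $vx$ present, or neither present. In each of them the vertex $x$ is not adjacent to $u$ in $R$ unless it is also adjacent to $v$ in $R$. Hence the Kelmans operation on $R$ also does nothing at $x$.
\end{enumerate}
Together these cases show $R'=R[u\rightarrow v]$.

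For the blue edges, the operation only moves an edge to a new position or swaps two colors. In case (a) with a blue edge, one blue edge is moved. In case (b), one blue edge changes from $vx$ to $ux$. So the number of blue edges is preserved, and $|E(B')|=|E(B)|$.

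The only delicate point is checking that case (c) really covers everything. In particular, in the subcase where $ux$ is blue and $vx$ is red, the Kelmans operation on $R$ must not act, and indeed it does not, because $u$ is not adjacent to $x$ in $R$. I expect no real obstacle beyond this bookkeeping.
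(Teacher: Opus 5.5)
Your proof is correct and follows the same route as the paper. The paper justifies the proposition by the one-line observation that the colored operation acts on $G_{\textrm{red}}$ exactly as the ordinary Kelmans operation and leaves the number of blue edges unchanged; your case analysis over the colors of $ux$ and $vx$ (including the $ux$ blue, $vx$ red subcase) just makes that observation explicit.
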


We say that $P^{rb}$ is \textit{feasible} if for any $u,v$ we have $P^{rb}(G^{rb})\le P^{rb}(G^{rb}[u\rightarrow_{rb} v])$. Note that we do not assume $P^{rb}(G^{rb})<P^{rb}(G^{rb}+e)$, since adding a blue edge increases $P^{rb}$ anyway.
Clearly, we have the following.
\begin{cor}
    If a graph parameter $P$ is weakly feasible, then $P^{rb}$ is feasible.
\end{cor}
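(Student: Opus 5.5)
The statement is the corollary: if $P$ is weakly feasible, then $P^{rb}$ is feasible. The plan is to unfold the definitions and reduce the claim to the weak feasibility of $P$ applied to the red graph, using the proposition just stated. Since the definition of feasibility for $P^{rb}$ only asks for monotonicity under the colored Kelmans operation, one inequality for arbitrary $G^{rb}$ and $u,v$ suffices.

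Fix a red-blue graph $G^{rb}$ and vertices $u,v$. I would first check that the colored operation acts on the two color classes as intended.

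The red graph of $G^{rb}[u\rightarrow_{rb} v]$ is exactly $G_{\textrm{red}}[u\rightarrow v]$. To verify this, go through each $x\neq u,v$ by the colors of $ux$ and $vx$.
\begin{itemize}
\item If $ux$ is red and $vx$ is absent, the red edge moves to $vx$. This matches the ordinary Kelmans operation in $G_{\textrm{red}}$.
\item If $ux$ is red and $vx$ is blue, the colors are swapped, so $vx$ becomes red and $ux$ becomes blue. In $G_{\textrm{red}}$, $x$ is adjacent to $u$ but not to $v$, so the Kelmans operation moves $ux$ to $vx$. Again this matches.
\item If $ux$ is blue, it is either moved to $vx$ or, when $vx$ is already present, left in place. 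Any red edge $vx$ is untouched. In every such case the red graph is unchanged, which agrees with $G_{\textrm{red}}[u\rightarrow v]$, since there $x$ is not a red neighbour of $u$.
\item If both $ux$ and $vx$ are red, nothing changes in either operation.
\end{itemize}

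The number of blue edges is preserved, because every operation above either moves an edge or swaps two colors. This is the content of the preceding proposition, so $P^{rb}(G^{rb}[u\rightarrow_{rb} v])=P(G_{\textrm{red}}[u\rightarrow v])+|E(G_{\textrm{blue}})|$.

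Weak feasibility of $P$ gives $P(G_{\textrm{red}})\le P(G_{\textrm{red}}[u\rightarrow v])$. Adding $|E(G_{\textrm{blue}})|$ to both sides yields
\[
P^{rb}(G^{rb})\le P^{rb}(G^{rb}[u\rightarrow_{rb} v]),
\]
which is the required inequality.

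The only potential subtlety is the case analysis confirming that the red graph transforms exactly by the ordinary Kelmans operation, which is the point of the color-swap rule in the mixed case. The rest is immediate.
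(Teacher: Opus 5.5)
Your proposal is correct and follows the paper's route: the paper gets the corollary immediately from the preceding proposition, $P^{rb}(G^{rb}[u\rightarrow_{rb} v])=P(G_{\textrm{red}}[u\rightarrow v])+|E(G_{\textrm{blue}})|$, combined with weak feasibility of $P$ applied to $G_{\textrm{red}}$. Your case analysis spells out the verification behind that proposition, namely that the colour-swap rule makes the red graph undergo exactly the ordinary Kelmans operation while the number of blue edges is unchanged; the paper states this only as an observation.
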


Given a graph $G$, we 
denote by $P^*(G)$ the largest value of $P^{rb}(G^{rb})$ where $G^{rb}$ is a red-blue coloring of $G$.
Then we have the following connection between $P$ and $P^*$.
\begin{prop}\label{feas}
    If $P$ is weakly feasible, then $P^*$ is feasible.
\end{prop}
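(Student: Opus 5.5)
The plan is to prove both defining properties of feasibility for $P^*$ directly, by taking an optimal red-blue coloring of $G$ and transporting it: to $G[u\rightarrow v]$ via the colored Kelmans operation, and to $G+e$ by coloring the new edge blue. Recall that $P^*$ is a maximum over all colorings. So to show $P^*(G)\le P^*(G')$ it suffices to exhibit one coloring of $G'$ whose $P^{rb}$-value is at least $P^*(G)$. Fix a coloring $G^{rb}$ of $G$ with $P^{rb}(G^{rb})=P^*(G)$.

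\emph{Strict monotonicity under adding edges.} Let $e\notin E(G)$. Color $e$ blue and keep all other colors as in $G^{rb}$. The red graph is unchanged and the number of blue edges goes up by one. Hence
\[
P^*(G+e)\ \ge\ P(G_{\textrm{red}})+|E(G_{\textrm{blue}})|+1\ >\ P^*(G).
\]
This step does not even use any property of $P$.

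\emph{Monotonicity under the Kelmans operation.} Consider $G^{rb}[u\rightarrow_{rb} v]$. There are two things to verify.
\begin{enumerate}
\item Its underlying graph is exactly $G[u\rightarrow v]$. Edges $ux$ with $x\notin N(v)$ are moved to $vx$. When $x$ is adjacent to both $u$ and $v$, only the colors are swapped, so the underlying edge set is untouched. Edges not involving $u$, as well as the edge $uv$ itself, are unchanged.
\item Its red graph is exactly $G_{\textrm{red}}[u\rightarrow v]$. This is a short case check on the pair of colors $(ux,vx)$, where ``none'' means the edge is absent.
\begin{itemize}
\item (red, none): the red edge moves to $vx$, which is what the ordinary Kelmans operation does in $G_{\textrm{red}}$.
\item (red, blue): after the swap, $vx$ is red and $ux$ is blue. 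In $G_{\textrm{red}}$, $x$ is a neighbor of $u$ but not of $v$, so it moves as well.
\item (blue, red), (red, red), (blue, none), (blue, blue): the red edges at $x$ are unaffected in both operations.
\end{itemize}
\end{enumerate}
This is essentially the content of the proposition stated just before the corollary, so I would cite it, adding the case check above for completeness.

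Combining the two facts,
\[
P^*(G[u\rightarrow v])\ \ge\ P^{rb}(G^{rb}[u\rightarrow_{rb} v])\ =\ P(G_{\textrm{red}}[u\rightarrow v])+|E(G_{\textrm{blue}})|\ \ge\ P(G_{\textrm{red}})+|E(G_{\textrm{blue}})|\ =\ P^*(G).
\]
The last inequality is where weak feasibility of $P$ is used, applied to the graph $G_{\textrm{red}}$. Together with the edge-addition step, this gives feasibility of $P^*$.

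The only delicate point is the bookkeeping in the second verification: confirming that the color swap in the (red, blue) case is exactly what makes the red graph undergo the ordinary Kelmans operation, while the blue edge count is preserved. Once that is checked, everything else is formal.
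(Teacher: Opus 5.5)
Your proof is correct and follows the paper's argument: you fix an optimal coloring, apply the colored Kelmans operation (the red graph undergoes the ordinary Kelmans operation and the blue count is unchanged), invoke weak feasibility of $P$ on $G_{\textrm{red}}$, and get strict monotonicity by adding $e$ as a blue edge. Your case check on the color pair of $(ux,vx)$ simply writes out the observation the paper states just before the proposition.
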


\begin{proof}
    Let $G$ be a graph and $G^{rb}$ be a red-blue coloring of $G$ with the largest value of $P^{rb}(G^{rb})$, i.e., $P^{rb}(G^{rb})=P^*(G)$. Let $u,v\in V(G)$ and $G'=G^{rb}[u\rightarrow_{rb} v]$. Recall that $G'_{\textrm{red}}=G_{\textrm{red}}[u\rightarrow v]$ and $|E(G'_{\textrm{blue}})|=|E(G_{\textrm{blue}})|$. Therefore, $P^*(G)=P(G_{\textrm{red}})+|E(G_{\textrm{blue}})|\leq P(G'_{\textrm{red}})+|E(G'_{\textrm{blue}})|=P^{rb}(G')$. Since $G'$ is a red-blue coloring of $G[u\rightarrow v]$, we have that $P^{rb}(G')\le P^*(G[u\rightarrow v])$, thus $P^*(G)\le P^*(G[u\rightarrow v])$.

    Let $G''$ be a red-blue coloring of $G+e$ obtained by adding a blue $e$ to $G^{rb}$. Then $P^*(G+e)\ge P^{rb}(G'')=P^{rb}(G^{rb})+1=P^*(G)+1$, proving the second condition of feasibility, thus completing the proof.
\end{proof}

Notice that the number of cliques in a graph is a weakly feasible parameter, which is proved in \cite{alns}.
Thus, for a given graph $G$, the maximum sum of the number of red cliques and the number of blue edges among all red-blue colorings is a feasible parameter.








Our goal is to apply Theorems \ref{path} and \ref{thm: ai-2-connect cycles} together with the above proposition, to give upper bounds on $g_r(G^{rb})$ for the graph in Lemma \ref{gmp}. There is only one problem with this approach: that the graph in Lemma \ref{gmp} is not necessarily connected (resp. 2-connected) even if the original hypergraph is connected (resp. 2-connected). The rest of this paper deals with overcoming this complication.
 
For a graph $G$ and a set $S\subseteq V(G)$, let $G[S]$ denote the subgraph of $G$ induced by $S$. For a vertex $v\in V(G)$, let $N_G(v)$ denote the nrighbourhood of $v$ in $G$. We omit the subscript $G$ when it is clear from the context. 

In this paper, we focus on the extremal structure of connected graphs (resp. $2$-connected graphs) with no long paths (resp. no long cycles) and achieve the maximum value of $g_r(G^{rb})$.
Theorem \ref{path} and Theorem \ref{thm: ai-2-connect cycles} show that the extremal structures are in the families of $\{W(n,k-1,s)\}_{1\leq s\leq t}$ and $\{W(n,k,s)\}_{2\leq s\leq t}$, where $t\in \left\{\lf k/2\rf-1,\lf(k-1)/2\rf \right\}$.
Thus, we determine the extremal coloring that maximizes $g_r(G^{rb})$ among all red-blue colorings of $W(n,k-1,s)$ (resp. $W(n,k,s)$) for $1\le s\le t=\lf k/2\rf-1$ (resp. $2\le s\le t=\lf(k-1)/2\rf$).


\begin{lemma}\label{lemmaa} 
Let $k\ge 2r+2\geq 8$, there is a constant $N_{r,k}\geq 4k$ such that the following holds.

\textbf{(i)}. For all $n\geq N_{r,k}$, if $G^{rb}$ is a red-blue coloring of $W(n,k-1,s)$ for some $1\le s\le t=\lf k/2\rf-1$, then the maximum value of $g_r(G^{rb})$ is achieved by the monored $W(n,k-1,t)$. Moreover, when $k=2r+2$ or $k=2r+3$, the maximum value of $g_r(G^{rb})$ is also attained by the monoblue $W(n,k-1,t)$. 

\textbf{(ii)}. For all $n\geq N_{r,k}$, if $G^{rb}$ is a red-blue coloring of $W(n,k,s)$ for some $2\le s\le t=\lf(k-1)/2\rf$, then the maximum value of $g_r(G^{rb})$ is achieved by the monored $W(n,k,t)$. Moreover, when $k=2r+2$, the maximum value of $g_r(G^{rb})$ is also attained by the monoblue $W(n,k,t)$. 
\end{lemma}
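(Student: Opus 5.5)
Throughout, write $W=W(n,m,s)$ with $m=k-1$ in part (i) and $m=k$ in part (ii), so $|X|=s$, $|Y|=m-2s$ and $|Z|=n-m+s$. The plan has two stages: first fix $s$ and find the best coloring of $W$, then compare the resulting values over $s$.

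\textbf{Stage 1: the best coloring for fixed $s$.} The only edges of $W$ whose number grows with $n$ are the $s(n-m+s)$ edges between $X$ and $Z$. Each vertex $z\in Z$ has $N(z)=X$, so every red $K_r$ through $z$ lies in $X\cup\{z\}$. The red cliques through $z$ therefore number at most $\binom{s}{r-1}$, which is attained exactly when $z$ is red-joined to all of $X$ and $X$ is a red clique. By contrast, coloring the edges at $z$ blue contributes only $s$.

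We use a local exchange argument, $z$ by $z$. Since the remaining edges form a graph of bounded size, the contribution from $Z$ dominates for $n\ge N_{r,k}$. Thus an optimal coloring makes $X$ red and treats every vertex of $Z$ identically, choosing all red when $\binom{s}{r-1}\ge s$ and all blue otherwise.

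\begin{itemize}
\item If $s\ge r$, we have $\binom{s}{r-1}\ge s$, with strict inequality unless $s=r$ and $r-1=1$; the latter cannot happen since $r\ge3$.
\item If $s=r-1$, we have $\binom{s}{r-1}=1<s$, so blue is better.
\item If $s<r-1$, the red contribution is $0$.
\end{itemize}

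It remains to color $X\cup Y$. This part has bounded size, so it only affects lower-order terms. Making it entirely red is optimal when $|X\cup Y|=m-s$ is large, but we should still check that the value $\binom{m-s}{r}$ (plus the other clique terms) beats any mixed coloring. A crude bound suffices here: each edge can be blue only at the cost of cliques, and cliques are superadditive.

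\textbf{Stage 2: comparing over $s$.} The leading coefficient in $n$ is $\max\{\binom{s}{r-1},s\}$, which is increasing in $s$. So for large $n$ the optimum is at $s=t$, provided that $\binom{t}{r-1}>\binom{s}{r-1}$ and $\binom{t}{r-1}>s$ for every $s<t$.

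For (i), we have $t=\lfloor k/2\rfloor-1\ge r$ since $k\ge 2r+2$. This condition is exactly where the hypothesis is used: for $s=r-1$ the blue coefficient is $r-1$, while $\binom{t}{r-1}\ge r>r-1$.

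When $t=r$, which happens for $k=2r+2$ or $k=2r+3$ in (i), and for $k=2r+2$ in (ii) where $t=\lfloor (k-1)/2\rfloor=r$, the red coefficient is $\binom{r}{r-1}=r=s$. The red and blue coefficients therefore tie. Comparing the constant terms then shows that monored and monoblue $W(n,m,t)$ give equal totals: in the monoblue case, $X\cup Y$ contributes its edge count $\binom{m-t}{2}$ plus $t|Z|$, and we check this directly against $r|Z|+\binom{m-t}{r}$. I am slightly unsure of this equality, and this is the step I would verify carefully. If instead $X\cup Y$ should remain red even when $Z$ is blue, the claimed monoblue equality is exactly the statement that these totals agree.

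\textbf{Main obstacle.} The hardest step is the exchange argument in Stage 1 when $Y$ is nonempty and $X$ might be partly blue. There are mixed colorings in which $X$ is not fully red but $Y$ carries many red cliques. To rule these out, I would compare directly with the coloring obtained by recoloring $X$ red. The loss in bounded terms is at most $\binom{m}{2}$, while the gain is $\Omega(n)$ whenever some vertex of $Z$ benefits, which holds whenever $s\ge r$. The case $s\le r-1$ needs no such argument, since there the coloring of $X$ does not matter for the leading term.
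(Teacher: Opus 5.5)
\textbf{The monored/monoblue comparison you flagged is a genuine gap, and in one case it cannot be closed, because the conclusion is false there.}

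First, a slip in Stage~1: $\binom{s}{r-1}=s$ holds at $s=r$ for every $r$, not only when $r=2$. So at $s=r$ red and blue always tie, which is the tie you later use.

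When $t=r$, each $z\in Z$ contributes at most $r$ however its edges are colored. Since $Z$ is independent, $g_r$ is the sum of these contributions plus the value of the coloring on the clique $X\cup Y=K_{m-t}$. Everything is therefore decided by your comparison of $\binom{m-t}{2}$ with $\binom{m-t}{r}$. These agree only when $m-t=r+2$, i.e.\ in (i) with $k=2r+3$ and in (ii) with $k=2r+2$.

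In (i) with $k=2r+2$ you have $m-t=r+1$ and $W(n,2r+1,r)=K_r\vee\overline{K_{n-r}}$. Monored gives $1+r(n-r)$, while monoblue gives $\binom{r}{2}+r(n-r)$, which is larger for $r\ge 3$. So in this case the monored $W(n,k-1,t)$ does not attain the maximum, and your Stage~1 claim that an optimal coloring makes $X$ red is also false.

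The paper's proof does not supply the missing step. It only asserts that the two values are ``easy to check'' to coincide, which is wrong exactly in this case. Consequently the route to Theorem~\ref{thm: con Pk} at $k=2r+2$ yields only $rn-r^2+\binom{r}{2}$, not $rn-r^2+1$.

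\textbf{Even where the statement is true, you have not proved anything about the bounded part.} $|X\cup Y|$ is not large: at $s=t$ it is about $k/2$. The remark that ``cliques are superadditive'' does not show that the all-red coloring of $X\cup Y$ is optimal.

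For $t\ge r+1$ the paper's route works, in three steps:
\begin{enumerate}
\item By the convexity bound $i+\binom{t-i}{r-1}$ of Claim~\ref{clm: rb with one vertex}, a vertex $z$ attains $\binom{t}{r-1}$ only when it is red-joined to a red $X$; otherwise it loses at least $1$. So for large $n$ you may assume $X$ is red.
\item For each $y\in Y$, recolor all blue edges from $y$ to $X$ at once.
\item Finally recolor the blue edges inside $Y$; each lies in at least $\binom{t}{r-2}\ge1$ red $K_r$ together with vertices of $X$.
\end{enumerate}

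In the tie cases with $m-t=r+2$, comparing the two monochromatic colorings is not enough either; you must also rule out mixed colorings of $K_{r+2}$. One way: the $r$-sets spoiled by a set $B$ of blue edges are the complements of the pairs disjoint from some edge of $B$. The Kneser graph on the pairs of an $(r+2)$-set is regular, so there are at least $|B|$ such pairs. Hence blue edges plus red $K_r$'s is at most $\binom{r+2}{2}$.
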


\begin{proof}
For any vertex $v$, we have the following claim to bound the number of red $r$-cliques plus the number of blue edges containing $v$.
\begin{clm}\label{clm: rb with one vertex} 
    Let $t\in \left\{\lf k/2\rf-1,\lf(k-1)/2\rf \right\}$.
    For a vertex with degree $t$ in $G^{rb}$, the number of red $r$-cliques and blue edges containing this vertex is at most $\binom{t}{r-1}$. For a vertex with degree less than $t$ in $G^{rb}$, the number of red $r$-cliques and blue edges containing this vertex is at most $\binom{t}{r-1}-1$.
\end{clm}
\begin{proof}[Proof of Claim]
For a vertex $v$ with degree $t$ that is incident to $i$ blue edges, the number of blue edges plus red $r$-cliques containing $z$ is at most $i+\binom{s-i}{r-1}$. By the convexity of the binomial coefficient, this is the largest when $i$ is 0 or $s$, i.e., at most $\max\{s, \binom{s}{r-1}\}$. Since $s\le t$, this is at most $\max\{t, \binom{t}{r-1}\}$. 
When $k\geq 2r+4$ or $k=2r+3$ and $r=\lf(k-1)/2\rf$, we have $t\ge r+1$, and thus that is at most $\binom{t}{r-1}$, with equality only if $i=0,s=t$, each edge incident to $v$ is red and $G[N(v)]$ is monored.
When $k=2r+2$ or $k=2r+3$ and $r=\lf k/2\rf-1$, we have $t=r$ and thus that is at most $\max\{r, \binom{r}{r-1}\}=r$, with equality only if $i=0,s=t$, each edge incident to $v$ is red and all the edges in $G[N(v)]$ is red, or if $i=t$ and each edge incident to $v$ are blue.

 
For a vertex with degree at most $t-1$, the number of red $r$-cliques and blue edges containing this vertex is at most $i+\binom{t-1-i}{r-1}$, which is the largest when $i$ is 0 or $t-1$, i.e., at most $\max\{t-1, \binom{t-1}{r-1}\}$. Since $t\ge r$, this is at most $\binom{t}{r-1}-1$.
\end{proof}

Let us return to the proof of \textbf{(i)}. 
Recall that $W(n,k-1,s)=K_s\vee [(n-k+s+1)K_1\cup K_{k-2s-1}]$, and let $X=V(K_s)$, $Y=V(K_{k-2s-1})$ and $Z=V((n-k+s+1)K_1)$.
Then we apply Claim \ref{clm: rb with one vertex} to each vertex of $Z$.
Then, each vertex has at most $\binom{t}{r-1}$ red $r$-cliques and blue edges containing it (since $s\leq t$), and the total contribution of vertices in $Z$ is at most $(n-k+s+1)\binom{t}{r-1}$.
For every vertex of $Z$, when $k\geq 2r+4$, 
if we recolor the incident edges to red, then $g_r(G^{rb})$ does not decrease. Similarly, if a vertex of $Y$ is joined to some vertices of $X$ by blue edge, then we can recolor those edges to red without decreasing $g_r(G^{rb})$. After that, only the edges inside $Y$ may be blue, but every such edge would form a red $K_r$ with $r-2$ vertices of $X$, thus we may recolor them to red. This way we obtain a monored $W(n,k-1,t)$ without decreasing $g_r(G^{rb})$.

When $k=2r+2$ or $k=2r+3$, 
if all the edges contained in $X$ are red, then we recolor the edges incident to $Z$ to red, then $g_r(G^{rb})$ does not decrease. Then we recolor all the other edges to red and similarly obtain a monored $W(n,k-1,t)$ without decreasing $g_r(G^{rb})$. If some of the edges contained in $X$ are blue, then we recolor all the edges incident to $Z$ to blue, and recolor all the other edges to blue. This way we obtain a monoblue $W(n,k-1,t)$ without decreasing $g_r(G^{rb})$.
It is easy to check in both cases that the value of $g_r(G^{rb})$ is the same for monored or monoblue $W(n,k-1,t)$, thus we are done. 

By a similar argument to that of \textbf{(i)}, we may prove \textbf{(ii)}. This completes the proof. 
\end{proof}

Combining the above lemma with Theorems \ref{path} and \ref{thm: ai-2-connect cycles}, we have the following corollary.

\begin{cor}\label{cor: extremal structure}
    Let $k\ge 2r+2\geq 8$. For any connected $P_k$-free graph (resp. $2$-connected $\cC_{\ge k}$-free graph) $G$ on at least $N_{r,k}$ vertices with a red-blue coloring $G^{rb}$, the maximum value of $g_r(G^{rb})$ is achieved by the monored $W(n,k-1,t)$ (resp. $W(n,k,t)$), where $t=\lfloor k/2\rfloor-1$ (resp. $t=\lfloor (k-1)/2\rfloor$) and $N_{r,k}$ is the constant given in Lemma \ref{lemmaa}.
\end{cor}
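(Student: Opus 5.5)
The plan is to apply the Kelmans machinery of Ai, Lei, Ning and Shi to the parameter $P^*$ and then evaluate it with Lemma~\ref{lemmaa}. Take $P(G)=\mathcal{N}(K_r,G)$. As noted after Proposition~\ref{feas}, the number of $r$-cliques is weakly feasible, so $P^*$ is feasible. Here $P^*(G)$ is the maximum of $g_r(G^{rb})=e(G_{\textrm{blue}})+\mathcal{N}(K_r,G_{\textrm{red}})$ over all red-blue colorings of $G$. For any red-blue coloring $G^{rb}$ of a connected $P_k$-free graph $G$ on $n\ge N_{r,k}$ vertices we then have $g_r(G^{rb})\le P^*(G)$. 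It remains to bound $P^*(G)$ over connected $P_k$-free $G$ (resp. over $2$-connected $\cC_{\ge k}$-free $G$).

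\textbf{Step 1: reduce to the graphs $W(n,\cdot,s)$.} Apply Theorem~\ref{path} to the feasible parameter $P^*$. This gives that every connected $n$-vertex $P_k$-free graph maximizing $P^*$ equals $W(n,k-1,s)$ for some $1\le s\le t=\lfloor k/2\rfloor-1$. In particular,
$$P^*(G)\le \max_{1\le s\le t}P^*(W(n,k-1,s)).$$
In the $2$-connected case, Theorem~\ref{thm: ai-2-connect cycles} similarly says that a maximizer is $W(n,k,s)$ for some $2\le s\le\lfloor (k-1)/2\rfloor$. Before invoking these theorems I will check their hypotheses, namely $n\ge k\ge 4$ (resp. $n\ge k\ge 5$). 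These hold because $k\ge 8$ and $N_{r,k}\ge 4k$.

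\textbf{Step 2: optimize the coloring.} By definition, $P^*(W(n,k-1,s))$ is the maximum of $g_r$ over red-blue colorings of $W(n,k-1,s)$. Lemma~\ref{lemmaa}(i) says that, as $s$ ranges over $1\le s\le t$, this maximum is attained by the monored $W(n,k-1,t)$. Hence $g_r(G^{rb})\le g_r(\text{monored }W(n,k-1,t))$. Lemma~\ref{lemmaa}(ii) gives the same conclusion in the cycle case with $W(n,k,t)$.

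\textbf{Main obstacle.} The delicate point is the logical link between ``a maximizer exists among the $W$'s'' and the inequality for an arbitrary $G$. Since $P^*$ is feasible, and not merely weakly feasible, the ``moreover'' parts of Theorems~\ref{path} and~\ref{thm: ai-2-connect cycles} apply directly. So the maximum of $P^*$ over the finite class of connected $P_k$-free graphs on $n$ vertices is attained at some $W(n,k-1,s)$, and it therefore dominates $P^*(G)$. One more point needs care. In the $2$-connected case the feasibility theorem restricts to $s\ge 2$, while Lemma~\ref{lemmaa}(ii) is stated for $2\le s\le t$, and these ranges match. Should the ``moreover'' clause be unavailable for some reason, I would fall back on the weak-feasibility bound with $1\le s\le t$. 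The case $s=1$ in the cycle setting would then be handled by the same vertex-counting argument as in Claim~\ref{clm: rb with one vertex}, which shows that $s<t$ loses at least one per vertex of $Z$.
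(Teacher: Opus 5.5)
Your proposal is correct and follows the paper's argument: Proposition~\ref{feas} makes $P^*$ feasible for $P=\mathcal{N}(K_r,\cdot)$, Theorems~\ref{path} and~\ref{thm: ai-2-connect cycles} reduce to the graphs $W(n,k-1,s)$ (resp.\ $W(n,k,s)$), and Lemma~\ref{lemmaa} then singles out the monored $W(n,k-1,t)$ (resp.\ $W(n,k,t)$). You also note that the $2$-connected case needs the ``moreover'' (feasible) clause to exclude $s=1$, so that the range matches $2\le s\le t$ in Lemma~\ref{lemmaa}\,\textbf{(ii)}; the paper leaves this implicit, and you handle it correctly.
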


Recall that a graph is connected if and only if there is a path between any pair of vertices. 
The analogous statement for hypergraphs and Berge paths is well-known, and we include a proof for the sake of completeness. 

\begin{lemma}\label{conn}
    A hypergraph $\cH$ is connected if and only if there is a Berge path between any pair of vertices. 
\end{lemma}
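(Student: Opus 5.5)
We prove both directions separately, working directly with the partition-based definition of connectivity given in the introduction. Recall that a Berge path between $x$ and $y$ is a sequence of distinct vertices $x=v_1,\dots,v_m=y$ together with distinct hyperedges $h_1,\dots,h_{m-1}$ such that $\{v_i,v_{i+1}\}\subseteq h_i$; we also treat the trivial case $x=y$ as a path with one vertex.

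\emph{Berge paths imply connectivity.} Suppose $\cH$ is not connected, so there is a partition $V(\cH)=V_1\cup V_2$ with $V_1,V_2$ nonempty and no hyperedge meeting both parts. Take $x\in V_1$ and $y\in V_2$, and consider any Berge path $v_1,\dots,v_m$ from $x$ to $y$. Since $v_1\in V_1$ and $v_m\in V_2$, there is a first index $i$ with $v_i\in V_1$ and $v_{i+1}\in V_2$. The hyperedge $h_i$ then contains vertices from both parts, which is a contradiction. Hence no Berge path joins $x$ and $y$.

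\emph{Connectivity implies Berge paths.} Fix a vertex $x$ and let $V_1$ be the set of vertices $y$ for which a Berge path from $x$ to $y$ exists; note $x\in V_1$. If $V_1=V(\cH)$ we are done, since $x$ was arbitrary. Otherwise $V_2=V(\cH)\setminus V_1$ is nonempty, and connectivity supplies a hyperedge $h$ containing some $u\in V_1$ and some $w\in V_2$. Take a Berge path $P$ from $x$ to $u$. If $h$ is not a defining hyperedge of $P$, then $P$ extended by $w$ using $h$ is a Berge path from $x$ to $w$. Here $w\notin P$, since every vertex of $P$ lies in $V_1$.

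If $h$ is a defining hyperedge of $P$, say $h=h_j$ with $\{v_j,v_{j+1}\}\subseteq h_j$, we truncate the path at $v_j$. The sequence $v_1,\dots,v_j,w$ with hyperedges $h_1,\dots,h_{j-1},h_j$ is a Berge path from $x$ to $w$: the hyperedges are distinct, $h_j$ contains both $v_j$ and $w$, and $w$ is new. Either way $w\in V_1$, which contradicts $w\in V_2$. So $V_1=V(\cH)$.

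The only step needing care is this reuse case, where the connecting hyperedge already appears on the chosen path. Truncating at the first endpoint of that hyperedge handles it, and a cleaner alternative is to choose $P$ shortest, which gives the same conclusion.
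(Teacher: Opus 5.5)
Your proof is correct and follows essentially the same route as the paper: you define the set of vertices reachable from a fixed vertex by Berge paths, take a hyperedge crossing to its complement, and extend a path to a new vertex. Your truncation at $v_j$ when the crossing hyperedge $h=h_j$ already lies on the path spells out exactly what the paper compresses into the remark that $|e\cap V_1|\ge 2$ yields a Berge path to $w_2$, and your forward direction makes explicit the paper's one-word justification ``clearly''.
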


\begin{proof} 
    Clearly, if $\cH$ is disconnected, then no Berge path connects vertices from different components. 
    Now assume that $\cH$ is connected, and let $u,v$ be two vertices.
    Let $V_1$ be the set of vertices $w$ such that there exists a Berge path connecting $v$ and $w$, and let $V_2=V(\cH)\setminus V_1$. 
    If $u\in V_1$, then we are done. Now suppose $u\in V_2$. 
    Since $\cH$ is connected, there is a hyperedge $e$ that intersects both $V_1$ and $V_2$. Suppose $w_1\in V_1\cap e$ and $w_2\in V_2\cap e$. Then $e$ is not a defining hyperedge in the Berge path from $v$ to some $w\in V_1\backslash\{v\}$. Otherwise, $|e\cap V_1|\geq2$ and we can find a Berge path from $v$ to $w_2$, a contradiction. Thus, we can extend the Berge path from $v$ to $w_1$ by adding the hyperedge $e$ and the defining vertex $w_2$, yielding a Berge path from $v$ to $w_2$, a contradiction. 
\end{proof}
A \emph{block} in a graph \(G\) is a maximal connected subgraph \(G'\) with no cut vertices (of \(G'\)) \cite{FKL2}. A block is called a \emph{leaf block} if it contains at most one cut vertex of $G$. 
In the graph case, we have the following result.
\begin{lemma}\label{lem: connect_find_path}
    Let $G$ be a connected graph with at least $t+2$ vertices, and $\delta(G)\geq t$, then for every vertex $v$, there is a path with at least $t+2$ vertices starting from $v$, unless $v$ is a cut vertex of a block which is a clique with $t+1$ vertices. 
\end{lemma}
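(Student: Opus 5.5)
The plan is the standard longest-path plus rotation argument, with the rotations anchored at $v$. Fix $v$ and let $P=x_1x_2\cdots x_m$ with $x_1=v$ be a longest path in $G$ starting at $v$. Suppose $m\le t+1$; we show that then $v$ is a cut vertex of a block which is a clique on $t+1$ vertices.

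\textbf{Step 1.} By maximality of $P$, every neighbour of the endpoint $x_m$ lies on $P$. Since $d(x_m)\ge \delta(G)\ge t$ and $P$ has only $m-1\le t$ vertices other than $x_m$, we get $m=t+1$. Moreover $x_m$ is adjacent to every $x_i$ with $1\le i\le t$.

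\textbf{Step 2 (rotations fixing $v$).} For each $i$ with $1\le i\le t-1$ we have $x_mx_i\in E(G)$, so
\[
x_1x_2\cdots x_i\,x_m x_{m-1}\cdots x_{i+1}
\]
is again a path from $v$ on the same vertex set. It is therefore also longest, and its endpoint is $x_{i+1}$. Together with $P$ itself, every vertex $x_2,\dots,x_{t+1}$ is the endpoint of a longest path starting at $v$ whose vertex set is $V(P)$. Applying Step 1 to each such path, every $x_j$ with $j\ge 2$ has all its neighbours in $V(P)$. Since it has at least $t$ neighbours among the $t$ other vertices of $P$, it is adjacent to all of them. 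Hence $G[V(P)]\cong K_{t+1}$, and no vertex of $V(P)\setminus\{v\}$ has a neighbour outside $V(P)$.

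\textbf{Step 3.} Since $|V(G)|\ge t+2$ and $G$ is connected, some edge leaves $V(P)$. By Step 2 every such edge is incident to $v$. Thus $v$ separates $V(P)\setminus\{v\}$ from the rest of $G$, so $v$ is a cut vertex. The set $V(P)$ induces a $2$-connected clique attached to the rest of $G$ only through $v$, so it is a block (indeed a leaf block) isomorphic to $K_{t+1}$ with cut vertex $v$. This is exactly the excluded case, so in all other cases $m\ge t+2$ and the required path exists.

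The only delicate point is the bookkeeping in Step 2. The rotations must keep $v$ as the fixed starting vertex, so that they never rotate away from $v$ itself. We also need that the rotated endpoints $x_{i+1}$, $1\le i\le t-1$, together with $x_m$, really exhaust $x_2,\dots,x_{t+1}$. This uses the fact that $x_m$ is adjacent to all of $x_1,\dots,x_t$, established in Step 1. Everything else is routine.
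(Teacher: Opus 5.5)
Your proof is correct and follows essentially the same route as the paper: take a longest path from $v$, force its endpoint's neighbourhood onto the path, then rotate while keeping $v$ fixed so that every other vertex of the path becomes an endpoint, which makes $V(P)$ a $K_{t+1}$ attached to the rest of $G$ only through $v$. Your Step~3, which uses connectivity and $|V(G)|\ge t+2$ to produce an edge leaving $V(P)$ and to show the clique is a block with cut vertex $v$, is more explicit than the paper's one-line conclusion.
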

\begin{proof}
    Let $P$ be the length path starting from $v$ with vertices $v=v_1,v_2,\dots,v_s$. 
    Then, we have $N(v_s)\subseteq \{v_1,\dots,v_{s-1}\}$, otherwise we can find a longer path.
    Since $\delta(G)\geq t$, we have $s-1\ge t$, and $s\ge t+1$.
    But if $s=t+2$, then we have find the path we want, thus $s=t+1$, and $N(v_s)=\{v_1,\dots,v_s\}$.

    As a result, every vertex $v_i$ in $\{v_2,\dots,v_s\}$ lies in a path with $t+1$ vertices starting from $v=v_1$ and ends at $v_i$.
    Then $N(v_i)=\{v_1,\dots,v_s\}\setminus \{v_i\}$ for every $i=2,\dots,s$.
    It implies $\{v_1,\dots,v_s\}$ forms a clique.
    Then, $v$ is a cut vertex of the clique formed by $\{v_1,\dots,v_s\}$.
\end{proof}

We now recall an analogous equivalent characterization of 2-connectedness for graphs. 
\begin{thm}[Whitney \cite{Wh}]\label{Whitney}
An undirected graph $G$ of order $n \ge 3$ is $2$-connected if and only if for any two distinct vertices $u, v \in V(G)$, there exist at least two internally vertex-disjoint $u-v$ paths in $G$. Here, internally vertex-disjoint means that the two paths share no common vertices except the endpoints $u$ and $v$.
\end{thm}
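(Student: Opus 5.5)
This is the classical Whitney theorem, so the plan is to reproduce the standard textbook argument rather than invoke any of the Berge machinery developed above.

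\emph{Sufficiency.} Suppose every pair of distinct vertices is joined by two internally vertex-disjoint paths. Then $G$ is connected. Let $w$ be any vertex and let $u,v\neq w$; such a pair exists because $n\ge 3$. At most one of the two internally disjoint $u$--$v$ paths can pass through $w$, so $u$ and $v$ remain joined in $G-w$. Hence $G-w$ is connected, $w$ is not a cut vertex, and $G$ is $2$-connected.

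\emph{Necessity.} Assume $G$ is $2$-connected. We induct on $d=d(u,v)$, the distance between $u$ and $v$.

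For $d=1$, the vertices $u,v$ are adjacent. Since $n\ge 3$ and $G$ is $2$-connected, the edge $uv$ is not a bridge; otherwise one of its endpoints would be a cut vertex, because the side containing it has at least two vertices. Therefore $G-uv$ is connected and contains a $u$--$v$ path. Together with the edge $uv$ itself, this gives two internally disjoint $u$--$v$ paths.

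For $d\ge 2$, let $w$ be the vertex preceding $v$ on a shortest $u$--$v$ path, so $d(u,w)=d-1$. By induction there are two internally disjoint $u$--$w$ paths $P_1$ and $P_2$. If $v$ lies on one of them, say $P_1$, then the subpath of $P_1$ from $u$ to $v$, together with $P_2$ followed by the edge $wv$, already gives two internally disjoint $u$--$v$ paths.

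Otherwise, since $w$ is not a cut vertex, $G-w$ contains a $u$--$v$ path $Q$. Let $x$ be the last vertex of $Q$ lying on $P_1\cup P_2$; such a vertex exists because $u$ lies on both. Suppose, without loss of generality, that $x\in P_1$. We then take two paths:
\begin{itemize}
\item the path that follows $P_1$ from $u$ to $x$ and then $Q$ from $x$ to $v$;
\item the path $P_2$ followed by the edge $wv$.
\end{itemize}
These are internally disjoint. The tail of $Q$ after $x$ avoids $P_1\cup P_2$ by the choice of $x$, and it avoids $w$ because $Q$ lies in $G-w$. The two pieces $P_1$ and $P_2$ meet only at $u$ and $w$, and $x\neq w$. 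The case $x=u$ is included in this construction.

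The only delicate step is the bookkeeping in this last case: choosing $x$ as the \emph{last} vertex of $Q$ on $P_1\cup P_2$, and checking that $x\neq w$ and that $v\notin P_1\cup P_2$, so that the two constructed paths are genuinely internally disjoint.
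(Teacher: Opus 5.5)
Your proof is correct in both directions, including the bookkeeping in the inductive step: $x\neq w$ because $Q\subseteq G-w$; in the first subcase $v\notin P_2$ because $v$ is an interior vertex of $P_1$; and the tail of $Q$ after $x$ avoids $P_1\cup P_2$. The paper gives no proof of this statement---it simply cites Whitney---but your induction on $d(u,v)$ is the standard argument, and it is exactly the scheme the paper adapts to Berge paths in Lemma~\ref{lem: 2 berge path conn 2 vertices}: a base case from the edge $uv$ not being a bridge, then extending the two $u$--$w$ paths to $v$, where $w$ is the predecessor of $v$, using a $u$--$v$ path in $G-w$.
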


For hypergraphs, we have the following analogous version for 2-connected hypergraphs. 

\begin{lemma}\label{lem: 2 berge path conn 2 vertices}
    Let $n\geq r\geq 3$ and $\cH$ be a $2$-connected $r$-graph on $n$ vertices. Then for any $u,v\in V(\cH)$, there exist two disjoint Berge paths (sharing no defining vertices except the endpoints $u$,$v$ and no defining hyperedges) between $u$ and $v$. 
\end{lemma}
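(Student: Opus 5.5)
Our plan is to reduce the statement to Whitney's theorem (Theorem \ref{Whitney}) applied to the incidence (bipartite) graph of $\cH$. Let $I=I(\cH)$ be the bipartite graph with vertex classes $V(\cH)$ and $E(\cH)$, where $x\in V(\cH)$ is adjacent to $e\in E(\cH)$ exactly when $x\in e$. A path in $I$ between two vertices $u,v\in V(\cH)$ alternates between vertices and hyperedges, $u=x_0,e_1,x_1,e_2,\dots,e_m,x_m=v$, with all $x_i$ distinct and all $e_i$ distinct, and $\{x_{i-1},x_i\}\subseteq e_i$. This is exactly a Berge path from $u$ to $v$ with defining vertices $x_i$ and defining hyperedges $e_i$. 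Hence two internally vertex-disjoint $u$--$v$ paths in $I$ are precisely two Berge paths sharing no defining vertex other than $u,v$ and no defining hyperedge. So it suffices to show that $I$ is $2$-connected and has at least $3$ vertices. The latter is immediate: $n\ge r\ge 3$ and $\cH$, being connected on $n\ge 3$ vertices, has at least one hyperedge.

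Next we check that $I$ is connected. This follows from connectivity of $\cH$, either via Lemma \ref{conn} (Berge paths are paths of $I$) or directly. Every hyperedge-vertex of $I$ is adjacent to its $r\ge 1$ elements, and any partition of $V(I)$ with no crossing edges restricts to a partition of $V(\cH)$ with no hyperedge meeting both parts. A part consisting only of hyperedge-vertices cannot occur, since each hyperedge is adjacent to its vertices. We also need that there are no isolated vertices in $\cH$, which again follows from connectivity since $n\ge 2$.

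The main step is to show that $I$ has no cut vertex, and we split into two cases. First, suppose a vertex $w\in V(\cH)$ were a cut vertex of $I$. Let $C_1,C_2$ be two components of $I-w$, and set $V_i=C_i\cap V(\cH)$, with any remaining components absorbed into $V_1$. Each hyperedge $e\ne$ lies in a single component together with all its elements other than $w$, so $e\subseteq\{w\}\cup V_1$ or $e\subseteq\{w\}\cup V_2$. We must check $V_i\neq\emptyset$. If $C_2$ contained only hyperedge-vertices, such a hyperedge $e$ would have all its $r\ge 3$ elements outside $C_2$ yet adjacent to it, so $e\subseteq\{w\}$, which is impossible. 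Thus $w$ is a cut vertex of $\cH$, a contradiction. Second, suppose a hyperedge $e$ were a cut vertex of $I$. Take components $C_1,C_2$ of $I-e$ and put $V_1=C_1\cap V(\cH)$ together with all other components, and $V_2=C_2\cap V(\cH)$. Each $f\neq e$ lies in one component along with all its elements, so $f\subseteq V_1$ or $f\subseteq V_2$. Again $V_2\ne\emptyset$, because a component made only of hyperedge-vertices is impossible (each hyperedge $f\neq e$ has neighbours, which are vertices of $\cH$ adjacent in $I-e$). Hence $e$ is a cut hyperedge, a contradiction. This case analysis, in particular verifying nonemptiness of the parts so that they match the definitions of cut vertex and cut hyperedge exactly, is where care is needed.

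With $I$ $2$-connected on at least $3$ vertices, Theorem \ref{Whitney} gives two internally disjoint $u$--$v$ paths in $I$ for distinct $u,v\in V(\cH)$. By the correspondence above, these are the required Berge paths. (If $u=v$ the statement is trivial.)
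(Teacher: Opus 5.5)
Your proof is correct, and it takes a genuinely different route from the paper's.

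\textbf{Your route.} You pass to the incidence graph $I(\cH)$ and show that the paper's hypergraph notion of $2$-connectivity (connected, no cut vertex, no cut hyperedge) forces graph $2$-connectivity of $I(\cH)$. A vertex of $\cH$ that is a cut vertex of $I$ gives a cut vertex of $\cH$. A hyperedge that is a cut vertex of $I$ gives a cut hyperedge. Your observation that no component of $I-w$ or $I-e$ consists only of hyperedge-vertices is what makes both parts nonempty, as the definitions require. You then apply Whitney's theorem (Theorem \ref{Whitney}) as a black box, since internally disjoint $u$--$v$ paths in $I$ are exactly the required pair of Berge paths.

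\textbf{The paper's route.} The paper re-proves Whitney's theorem at the hypergraph level. It rephrases the goal as finding a Berge cycle through $u$ and $v$ and inducts on the Berge distance. The absence of a cut hyperedge handles the base case. The absence of a cut vertex lets it route a shortest Berge path from $v$ back to $C_{u,w_{k-1}}$ avoiding $w_{k-1}$. It then splits into cases according to whether the last hyperedge of that path is a defining hyperedge of the cycle.

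\textbf{What each buys.} Your reduction is shorter and more robust. All bookkeeping about hyperedges being reused is absorbed by the fact that a path in $I$ has distinct vertices. The paper's inductive argument has to track this by hand. For instance, the hyperedge $e_k$ through $w_{k-1}$ and $v$ could itself be a defining hyperedge of $C_{u,w_{k-1}}$ or the first hyperedge of the new path, and the paper does not address this explicitly. Your argument also works verbatim for all $r\ge 2$. It makes Lemma \ref{lem: 2 berge path conn} essentially immediate: add to $I$ two new vertices joined to all of $S_1$ and all of $S_2$ respectively, and apply Whitney again, instead of using the paper's gadget hypergraph. The paper's proof, by contrast, stays entirely within hypergraph language, but in effect it duplicates the classical graph proof.
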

\begin{proof}
    It is equivalent to proving that there exists a Berge cycle containing $u$ and $v$ as defining vertices.
    We prove it by induction on the distance of $u$ and $v$, i.e., the length of the minimum Berge path connecting $u$ and $v$.
    When the distance of $u$ and $v$ is $1$, it implies there exists a hyperedge $h$ containing $u$ and $v$.
    Delete hyperedge $h$, since $\cH$ is $2$-connected, the resulting hypergraph $\cH'$ is connected. By Lemma \ref{conn} there exists a Berge path $P$ in $\cH'$ connecting $u$ and $v$, then $h$ and $P$ form a Berge cycle containing $u$ and $v$ as defining vertices.

    Suppose that the distance between $u$ and $v$ is $k > 1$.
    Let $P_{uv}=u=w_0,e_1,w_1,\dots,w_{k-1},e_k,v$ be the shortest Berge path connecting $u$ and $v$.
    Then, the distance of $u,w_{k-1}$ is $k-1$.
    By the induction hypothesis, there exists a Berge cycle $C_{u,w_{k-1}}$ containing $u$ and $w_{k-1}$ as defining vertices.
    If $v$ is a defining vertex of $C_{u,w_{k-1}}$, then we are done. Thus, we may assume $v$ is not a defining vertex of $C_{u,w_{k-1}}$.
    
    Then, since $\cH$ is $2$-connected, we can similarly prove that there exists a shortest Berge path $P'$ avoiding the vertex $w_{k-1}$, with defining hyperedges $g_1,\dots, g_\ell$, and connecting $v$ and some defining vertex $z$ of $C_{u,w_{k-1}}$, where $z\neq w_{k-1}$.
    We assume $v\in g_1,$ and $z\in g_\ell$.
    Then for $j<\ell$, $g_j$ is not a defining hyperedge of $C_{u,w_{k-1}}$, and does not contain any defining vertices of $C_{u,w_{k-1}}$, otherwise we find a shorter path, a contradiction.

    If $g_\ell$ is a defining hyperedge of $C_{u,w_{k-1}}$, then $g_\ell$ contains two defining vertices $x_1,x_2$. Without loss of generality, the  sub-Berge path $P''$ of $C_{u,w_{k-1}}$ from $x_1$ to $u$ avoids the vertices $\{x_2,w_{k-1}\}$. Then we pick $z:=x_1$. From $v$, we can go to $x_1$ through $P'$, and then to $u$ through $P''$, or go to $w_{k-1}$ using $e_k$ and then to $u$ in the other direction inside $C_{u,w_{k-1}}$.
    
    If $g_\ell$ is not a defining hyperedge of $C_{u,w_{k-1}}$, then we can simply go from $z$ to $u$ using the sub-Berge path of $C_{u,w_{k-1}}$ from $z$ to $u$ avoids the vertices $w_{k-1}$, and the rest of the argument is the same.
        This completes the proof.
\end{proof}

The above lemma can be easily extended to the setting of two disjoint vertex sets as follows, which will be used in the proof of Theorem \ref{thm: 2-conn Bergee cycles}. 
Let $\cH$ be an $r$-graph, and let $S_1,S_2\subseteq V(\cH)$ be disjoint vertex sets with $|S_i|\geq 2$. 

We say that there exist two disjoint Berge paths between $S_1$ and $S_2$ if there exist two disjoint Berge paths from $u_1$ to $v_1$ and from $u_2$ to $v_2$ with $u_1,u_2\in S_1$ and $v_1,v_2\in S_2$, that share no defining vertices and no defining hyperedges. 

\begin{lemma}\label{lem: 2 berge path conn}
    Let $n\geq r\geq 3$ and $\cH$ be a $2$-connected $r$-graph on $n$ vertices. Suppose that $S_1,S_2\subseteq V(\cH)$ are two disjoint vertex sets with $|S_i|\geq 2$. Then 
    there exist two disjoint Berge paths between $S_1$ and $S_2$.
\end{lemma}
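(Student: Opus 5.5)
The natural route is to reduce Lemma~\ref{lem: 2 berge path conn} to the two-vertex case, Lemma~\ref{lem: 2 berge path conn 2 vertices}, by a standard ``add two new vertices'' trick, as in Menger's theorem for graphs. Directly adjoining new vertices would destroy $r$-uniformity, so I would instead pick $u_1\in S_1$ and $v_1\in S_2$, take a Berge cycle $C$ through $u_1$ and $v_1$ as defining vertices (it exists by Lemma~\ref{lem: 2 berge path conn 2 vertices}), and then modify it.

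Traversing $C$ gives two arcs from $u_1$ to $v_1$. They share only $u_1$ and $v_1$ and use disjoint hyperedges. On each arc, the defining vertices appear in order. On the first arc, let $a$ be the last defining vertex lying in $S_1$ before the first defining vertex lying in $S_2$, and let $b$ be that first $S_2$-vertex. Define $a',b'$ on the second arc in the same way. Such vertices exist because the arc starts at $u_1\in S_1$ and ends at $v_1\in S_2$. The sub-arcs from $a$ to $b$ and from $a'$ to $b'$ are disjoint Berge paths. The only problem is the degenerate case where $a=a'=u_1$ or $b=b'=v_1$, that is, where one endpoint is shared.

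So the main obstacle is guaranteeing two distinct endpoints on each side. Here I would use $|S_i|\ge 2$. Choose the pair carefully: take $u_1\in S_1$ and $v_1\in S_2$. Suppose every Berge cycle through them meets $S_1$ only at $u_1$ among the relevant initial segments. Then pick $u_2\in S_1\setminus\{u_1\}$ and apply Lemma~\ref{lem: 2 berge path conn 2 vertices} to $u_2$ and $v_1$ as well. A cleaner approach is to mimic the proof of Lemma~\ref{lem: 2 berge path conn 2 vertices}:
\begin{itemize}[label={}]
\item start from a Berge cycle $C$ through $u_1\in S_1$ and $v_1\in S_2$;
\item if $C$ contains a second defining vertex of $S_1$ (resp. $S_2$), the arc argument above gives the distinct endpoints;
\item otherwise, take a shortest Berge path $P'$ from $u_2\in S_1\setminus\{u_1\}$ to $V(C)$ that avoids $u_1$. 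Such a path exists because $u_1$ is not a cut vertex and, by Lemma~\ref{conn}, the hypergraph minus $u_1$ stays connected in the Berge sense.
\end{itemize}
In the last case, minimality ensures that $P'$ shares no defining hyperedges with $C$ except possibly its last one, and that last one is handled exactly as the $g_\ell$ case in the proof of Lemma~\ref{lem: 2 berge path conn 2 vertices}: reroute through one of the two defining vertices it contains. Let $z$ be the attachment point on $C$. One of the two arcs of $C$ from $z$ to $v_1$ avoids $u_1$; following $P'$ and then that arc gives a path from $u_2$ to $v_1$. The other arc, from $u_1$ toward $v_1$, gives a path from $u_1$ to $v_1$. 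These two paths still share the endpoint $v_1$.

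Repeating the same procedure on the $S_2$ side with $v_2\in S_2\setminus\{v_1\}$ replaces one copy of $v_1$. To keep this from disturbing the $S_1$-side work, I would perform it inside the cycle formed by the two current paths, so that the endpoints stay distinct. Finally, I would truncate each path at its last $S_1$-vertex and its first $S_2$-vertex, which preserves disjointness. The careful bookkeeping, namely that the rerouting through a shared last hyperedge never reuses a defining hyperedge and that the truncation never merges endpoints, is where I expect most of the work to lie.
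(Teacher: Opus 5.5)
Your route is genuinely different from the paper's, and it works once two steps of the sketch are repaired (described below).

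\textbf{Comparison with the paper.} The paper uses exactly the ``add two new vertices'' reduction you set aside, and keeps uniformity by padding. For each $v\in S_i$ it adds a hyperedge consisting of $v$, a new vertex $s_i$ and $r-2$ fresh vertices, and it adds all $r$-subsets of the fresh vertices on each side. It then asserts that the result is still $2$-connected, applies Lemma~\ref{lem: 2 berge path conn 2 vertices} to $s_1,s_2$, and reads the two paths off the resulting cycle. This is a one-line reduction, but everything rests on the $2$-connectivity of the auxiliary hypergraph, which is asserted rather than checked. It needs care: for $r=3$ and $|S_i|=2$ there is no $r$-subset of the two padding vertices. Each padding vertex then lies in a single hyperedge, and that hyperedge is a cut hyperedge in the paper's sense. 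You instead stay inside $\cH$ and reuse the mechanism of the proof of Lemma~\ref{lem: 2 berge path conn 2 vertices} itself, giving a Menger-style two-step augmentation:
\begin{itemize}
\item a vertex that is not a cut vertex yields Berge paths avoiding it as a defining vertex;
\item minimality of the connecting path, plus rerouting at its last hyperedge, keeps the defining hyperedges distinct.
\end{itemize}
This is longer but self-contained and avoids the auxiliary-hypergraph issue.

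\textbf{First repair.} After your fan step you have $Q_1$ from $u_2$ to $v_1$ and $Q_2$ from $u_1$ to $v_1$, meeting only in $v_1$. Their union is a $u_2$--$u_1$ path through $v_1$, not a cycle. A cycle through three prescribed vertices need not exist at all, as $K_{2,3}$ shows for graphs. So run the second augmentation against this path:
\begin{itemize}
\item Take a shortest Berge path $P''$ from $v_2$ to the defining vertices of $Q_1\cup Q_2$ other than $v_1$, not using $v_1$ as a defining vertex. It exists because $v_1$ is not a cut vertex.
\item Suppose it ends at $z'$ on $Q_2$. If the last hyperedge of $P''$ is a defining hyperedge of $Q_2$, take $z'$ to be the defining vertex of that hyperedge nearer to $u_1$.
\item Then $Q_2[u_1,z']$ followed by $P''$ reversed, together with $Q_1$, are the required paths. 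The case where $z'$ lies on $Q_1$ is symmetric.
\end{itemize}

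\textbf{Second repair.} Your per-arc choice of $a,b,a',b'$ can fail even when $C$ has a second $S_1$-vertex. If the first arc reads $u_1,\dots,b,\dots,u_2,\dots,v_1$ with $b\in S_2$, and the second arc has no other $S_1$-vertex, you get $a=a'=u_1$. This preliminary case is unnecessary anyway, as is the final truncation. The two augmentations always succeed, using a trivial $P'$ or $P''$ when $u_2$ or $v_2$ already lies on the current structure. Also, the definition does not forbid interior defining vertices in $S_1\cup S_2$.
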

\begin{proof}
    Add two new vertices $s_1$ and $s_2$. 
    For each vertex $v \in S_1$, add a hyperedge containing $v$, $s_1$, and \(r-2\) new distinct vertices. Similarly, for each vertex \(v \in S_2\), add a hyperedge containing \(v\), \(s_2\), and \(r-2\) distinct new vertices. 
    The resulting hypergraph contains $1+(r-2)|S_i|$ new vertices associated with each set $S_i$. Now add all hyperedges contained in these $(r-2)|S_1|$ new vertices and $(r-2)|S_2|$ new vertices, respectively. 
    It is easy to check that the resulting hypergraph is still $2$-connected by the definition of $2$-connected. 
    By Lemma \ref{lem: 2 berge path conn 2 vertices}, there exist two disjoint Berge paths between $s_1$ and $s_2$ in the resulting hypergraph. This implies that there exist two disjoint Berge paths between $S_1$ and $S_2$ in $\cH$, completing the proof. 
\end{proof}

\section{Connected Tur\'an number of Berge paths}\label{3}
In this section, we provide the proof of Theorem \ref{thm: con Pk}, which we restate here for convenience. 

\begin{thmbis}{thm: con Pk}
    For all integers $n,k$ and $r$, there exists $N_{r,k}$ such that if $n>N_{r,k}$ and $k\geq 2r+2\geq 8$, then
    $$\ex^{conn}_r(n,\textup{Berge-}P_k)=\binom{\lf k/2 \rf-1}{r-1}(n-\lc k/2\rc)+\binom{\lc k/2 \rc}{r}.$$
\end{thmbis}
\begin{proof}
    Let $\cH$ be a connected $n$-vertex Berge-$P_k$-free $r$-graph. By Lemma \ref{gmp}, we can construct a $P_k$-free red-blue graph $G^{rb}$ such that $e(\cH)\le g_r(G^{rb})$. Here we pick among such graphs $G^{rb}$ a red-blue graph such that $G$ has the fewest connected components. Let $G$ be the underlying graph of $G^{rb}$. 
    We will apply Proposition \ref{feas} with $P(G)=N(K_r,G)$, thus $P^{rb}(G^{rb})=g_r(G^{rb})$. 


It was shown in \cite{alns} that $P$ is weakly feasible, therefore, $P^*$ is feasible. By Theorem \ref{path}, if $G$ is connected, then we have $P^*(G)\le \max\{P^*(W(n,k-1,s)): 1\le s\le t\}$. Therefore, 
$$e(\cH)\le g_r(G^{rb})=P^{rb}(G^{rb})\le P^*(G)\le \max\{P^*(W(n,k-1,s)): 1\le s\le t\}.$$ Then Lemma \ref{lemmaa}\,\textbf{(i)} completes the proof.
Therefore, we can assume that $G$ is disconnected.
Now we classify the components.
\begin{itemize}
    \item A component of $G$ is \textit{nice} if each vertex in it has degree at least $\lf k/2 \rf -1$, and contains no block that is a clique with size $\lf k/2 \rf$.
    \item Consider a component $U$ that is not nice.
    We remove the vertices of degree less than $\lf k/2\rf -1$ from $U$, and the non-cut vertices in blocks that are cliques with size $\lf k/2\rf $.
    Then we repeat this in the remaining part of $U$, and continue this until we are left with a subset $U'\subset U$ where every degree is at least $\lf k/2 \rf -1$, and without blocks that are cliques with size $\lf k/2 \rf$. We say that $U$ is \textit{strong} if $U'$ is not empty and $U'\neq U$.
    \item The other components are \textit{bad}.
\end{itemize}
\begin{clm}\label{clm: path in nice and strong}
    In a nice component, every vertex is the starting vertex of a path of length at least $\lf k/2 \rf$. In a strong component $U$, 
    every vertex in $U'$ is the starting vertex of a path of length at least $\lf k/2 \rf$, and every vertex in $U\setminus U'$ is the starting vertex of a path of length at least $\lf k/2 \rf+1$.
\end{clm}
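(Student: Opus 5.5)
The plan is to reduce both parts to Lemma~\ref{lem: connect_find_path} with $t=\lfloor k/2\rfloor-1$. Throughout, ``length'' counts edges, so a path with at least $t+2=\lfloor k/2\rfloor+1$ vertices has length at least $\lfloor k/2\rfloor$.

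\emph{Nice components.} Let $C$ be a nice component and regard it as a connected graph. By definition $\delta(C)\ge t$, so $|C|\ge t+1$. If $|C|=t+1$, then $C$ would be $K_{t+1}$. That graph is itself a block that is a clique of size $\lfloor k/2\rfloor$, which niceness excludes. Hence $|C|\ge t+2$ and all hypotheses of Lemma~\ref{lem: connect_find_path} hold. The exceptional case of that lemma needs a block that is a clique on $t+1$ vertices, and $C$ has no such block. So every vertex $v$ of $C$ starts a path with at least $t+2$ vertices, i.e.\ of length at least $\lfloor k/2\rfloor$.

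\emph{Vertices of $U'$ in a strong component.} Look at the graph $G[U']$. The peeling procedure stops exactly when the remaining graph has minimum degree at least $t$ and no block that is a clique of size $t+1$. Hence each connected component $D$ of $G[U']$ satisfies $\delta(D)\ge t$ and has no block isomorphic to $K_{t+1}$. The argument for nice components then shows $|D|\ge t+2$. So Lemma~\ref{lem: connect_find_path} applies inside $D$, and every $y\in U'$ starts a path of length at least $\lfloor k/2\rfloor$ using only vertices of $U'$.

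\emph{Vertices of $U\setminus U'$.} Let $x\in U\setminus U'$. Since $U$ is connected and $U'\neq\emptyset$, take a shortest path $Q$ in $U$ from $x$ to $U'$. It ends at some $y\in U'$, and all its other vertices lie in $U\setminus U'$. Because $x\notin U'$, the path $Q$ has length at least $1$. By the previous step there is a path $R$ from $y$ of length at least $\lfloor k/2\rfloor$ inside $G[U']$. The paths $Q$ and $R$ share only $y$, so their concatenation is a path from $x$ of length at least $\lfloor k/2\rfloor+1$.

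The only step needing care is the claim that each component of $G[U']$ meets the hypotheses of Lemma~\ref{lem: connect_find_path}. In particular we must rule out that it has only $t+1$ vertices, and we must make sure the lemma's exceptional case cannot occur. Both follow from the stopping condition of the peeling, which forbids blocks that are cliques of size $\lfloor k/2\rfloor$ in the remaining graph.
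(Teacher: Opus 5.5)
Your proof is correct and follows the paper's argument. Like the paper, you apply Lemma~\ref{lem: connect_find_path} with $t=\lfloor k/2\rfloor-1$ to nice components and to $U'$, then prepend a shortest path from $x\in U\setminus U'$ to $U'$. Your additional checks fill in details the paper leaves implicit: ruling out a component on exactly $t+1$ vertices, and applying the lemma to each component of $G[U']$ separately since $G[U']$ need not be connected.
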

    \begin{proof}[Proof of Claim]
     First, we show that in a nice component, every vertex is the starting vertex of a path of length at least $\lf k/2 \rf$ by Lemma \ref{lem: connect_find_path}, because there is no block which is a clique with size $\lf k/2 \rf$.

     By the same argument, in a strong component, each vertex in $U'$ is the starting vertex of a path of length at least $\lf k/2 \rf $ inside $U'$. For each vertex $u\in U\setminus U'$, there is a shortest path to $U'$, and from the end of that path, there is a path of length at least $\lf k/2 \rf$ inside $U'$. Therefore, there is a path of length at least $\lf k/2 \rf+1$ starting at $u$.
     \end{proof}
     Then, we have the following claim.
   \begin{clm}
       There is at most one component in $G$ that is nice or strong.
   \end{clm}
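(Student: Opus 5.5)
We argue by contradiction. Suppose $G$ has two distinct components $U_1,U_2$, each nice or strong. We will build a Berge-$P_k$ in $\cH$, contradicting that $\cH$ is Berge-$P_k$-free. The idea is to glue a long path in $U_1$ to a long path in $U_2$ through a Berge path of $\cH$ that joins the two components.

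\textbf{Step 1: a connecting Berge path.} Since $\cH$ is connected, Lemma \ref{conn} gives a Berge path in $\cH$ from a vertex of $U_1$ to a vertex of $U_2$. Take a shortest such Berge path among all pairs of vertices of $U_1$ and $U_2$. Call its endpoints $u\in U_1$ and $w\in U_2$, and its defining hyperedges $h_1,\dots,h_m$. By minimality, no internal defining vertex lies in $U_1\cup U_2$, so the path has at least one hyperedge.

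\textbf{Step 2: disjointness from the edges of $G$.} Proposition \ref{prop: matching} gives a matching $M$ in the auxiliary bipartite graph that covers $E(G)$ and assigns to each edge $xy$ of $G$ a distinct hyperedge of $\cH$ containing $x,y$. Using this, every path of $G$ inside $U_1$ or $U_2$ becomes a Berge path in $\cH$. The hyperedges $h_j$ must not clash with those used by these paths. This is the main obstacle, and I would handle it with the minimality of the number of components of $G$. Suppose some $h_j$ is not the $M$-image of an edge of $G$, or is the $M$-image of an edge inside some component. In the first case, add to $G$ a blue edge joining two vertices of $h_j$ lying in different components, and match it to $h_j$. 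In the second case, swap the edge matched to $h_j$ for such an edge and recolor suitably. Either way $g_r$ does not decrease and the resulting graph has fewer components. The only thing to check is that the new graph remains $P_k$-free. If it is not, then a $P_k$ in the new graph lifts through $M$ to a Berge-$P_k$ in $\cH$, which is impossible. Hence the minimality assumption lets us assume each $h_j$ meets components in a controlled way. In particular, the $h_j$ are distinct from all hyperedges $M$-matched to edges of $U_1$ and $U_2$. Some care is needed here, possibly by re-choosing the matching edge of $h_j$ so that it does not use the path's edges.

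\textbf{Step 3: length count.} By Claim \ref{clm: path in nice and strong}, there is a path in $U_1$ starting at $u$ with at least $\lfloor k/2\rfloor$ edges, hence at least $\lfloor k/2\rfloor+1$ vertices. The same holds for a path in $U_2$ starting at $w$. Lift both paths to Berge paths via $M$. Concatenate them with the connecting Berge path $h_1,\dots,h_m$. The defining vertices of the three pieces are disjoint apart from $u$ and $w$, because the internal vertices of the connecting path avoid $U_1\cup U_2$. The defining hyperedges are distinct by Step 2. The result is a Berge path with at least $2(\lfloor k/2\rfloor+1)\ge k$ vertices, so it contains a Berge-$P_k$, a contradiction.

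If $u$ or $w$ lies in $U\setminus U'$ of a strong component, Claim \ref{clm: path in nice and strong} gives an even longer path, so the count only improves. Therefore at most one component of $G$ is nice or strong.
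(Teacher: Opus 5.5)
\textbf{There is a genuine gap in your Step 2, in the bridge case.} By minimality of the connecting Berge path, $h_2,\dots,h_{m-1}$ do not even meet $U_1\cup U_2$, and $h_1$ does not meet $U_2$ when $m\ge 2$. So the only possible clashes are $h_1=M(a)$ for an edge $a=xx'\subseteq h_1\cap U_1$, or $h_m=M(a')$ for an edge of $U_2$. Your swap deletes $a$ and adds an edge from $x$ to a vertex of $h_1\setminus U_1$, matched to $h_1$. This lowers the number of components only if $a$ is \emph{not} a bridge of $U_1$. If $a$ is a bridge, deleting it splits $U_1$ into two pieces and the new edge merges one piece with another component, so the count is unchanged. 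Minimality of $G$ then gives no contradiction, and nothing in your argument excludes this configuration. Hence you cannot conclude that $h_1$ differs from every hyperedge matched to an edge of $U_1$. In that situation the path from Claim~\ref{clm: path in nice and strong} starting at $u$ may use $a$, and your concatenation in Step 3 is not a Berge path. Your remark about ``re-choosing the matching edge'' points at the difficulty but does not resolve it.

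\textbf{The missing idea, which is what the paper does, is to use minimality only to show that $a$ is a bridge, and then re-anchor the path.} Since $x\in h_1$, let the connecting Berge path end at $x$ instead of $u$. Then take a path of length at least $\lfloor k/2\rfloor$ starting at $x$ inside the component of $U_1-a$ containing $x$; such a path automatically avoids $a$. You must check that this side still contains such a path. For a nice component, the only obstruction, as in Lemma~\ref{lem: connect_find_path}, would be a block of $U_1$ that is a clique on $\lfloor k/2\rfloor$ vertices hanging at $x$, and niceness excludes this. Do the same at $U_2$ if $h_m$ clashes.

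\textbf{Your claims that one can ``recolor suitably'' and that $g_r$ does not decrease also need justification.} Deleting a red edge can destroy red $K_r$'s that other hyperedges rely on. The swap is in fact safe for a specific reason. A hyperedge meeting two components of $G$ cannot be one accounted for by a red $K_r$, since its $r$ vertices would then form a clique of $G$ inside one component. So it is matched to a blue edge. Replacing that blue edge by another blue edge inside the same hyperedge preserves $e(\cH)\le g_r(G^{rb})$ and the injective lift, and hence $P_k$-freeness.
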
  

   \begin{proof}[Proof of Claim]    
   Assume that $U$ and $W$ are nice or strong components. Since $\cH$ is connected, by Lemma \ref{conn} there is a shortest Berge path connecting them with hyperedges $h_1,\dots, h_\ell$, such that $h_1$ contains a vertex of $U$ and $h_\ell$ contains a vertex of $W$. Because this is the shortest such Berge path, only $h_1$ contains one or more vertices of $U$ and only $h_\ell$ contains one or more vertices of $W$. We take the longest path starting at a vertex of $U\cap h_1$ inside $U$, and consider the corresponding hyperedges of $\cH$. We also take the longest path starting at a vertex of $W\cap h_\ell$ inside $W$ and the corresponding hyperedges.
   According to Claim \ref{clm: path in nice and strong}, these two paths have length at least $\lf k/2 \rf$.
   This extends $h_1,\dots,h_\ell$ to a Berge path of length at least $k-1$, unless a hyperedge is used multiple times. This can only happen if it is an $h_i$ and also $M(a)=h_i$ for some edge $a$ inside $U$ or $W$, where $M$ is the matching in Proposition \ref{prop: matching}. Therefore, we either have that $i=1$ and $a$ is an edge inside $U$, or $i=\ell$ and $a$ is an edge inside $W$, or both. We assume without loss of generality that $h_1=M(a)$ for some $a=uu'$ used by the path inside $U$.


   We claim that $uu'$ cuts $U$ into two components. Indeed, 
otherwise we could change $uu'$ to a blue edge $uv$ with $v\in h_1\setminus U$ to obtain another red-blue graph satisfying the desired properties with fewer components, contradicting our choice of $G$. Then $uu'$ also cuts $U'$ into two components. Let $U_1$ denote the component containing $u$ after removing the edge $uu'$. We apply our greedy algorithm inside $U_1$, starting at $u$, which gives a path of length at least $\lf k/2 \rf$, starting at $u$. Obviously, this path does not contain $uu'$.

Similarly, if $h_\ell=M(a')$ for some edge $a'=ww'$ inside $W$, then we can find a path of length at least $\lf k/2 \rf$, starting at $w$, that does not contain $ww'$. Then the hyperedges corresponding to these two paths together with $h_1,\dots,h_\ell$ form a Berge path of length at least $2\lf k/2 \rf\geq k-1$, a contradiction. 
   \end{proof}

   Let us return to the proof of the theorem. For components that are neither nice nor strong, we remove the vertices and blocks that are $\lf k/2 \rf$-clique one by one. Recall that each time we removed a vertex $v$ (of degree less than $\lf k/2 \rf-1$), it has degree at most $\lf k/2 \rf-2$ at that point. According to Claim \ref{clm: rb with one vertex}, we removed at most $\binom{\lfloor (k-1)/2\rfloor}{r-1}-1$ red $r$-cliques and blue edges.
   And we removed the non-cut vertices of the blocks, which are cliques of size $\lf k/2 \rf$, the number of red $r$-cliques and blue edges is at most $\binom{ \lf k/2 \rf}{r}\leq (\lf k/2\rf -1)\left (\binom{\lf k/2 \rf-1}{r-1}-1\right)$. It implies that, on average, when deleting a vertex, the number of red $r$-cliques and blue edges decreases at most $\binom{\lf k/2 \rf-1}{r-1}-1$.

  In the remaining single component, we have already established the desired bound. Let $n'$ be the number of vertices in that component.
  If $n'\geq N'_{r,k}$, where $N'_{r,k}$ is the constant in Lemma \ref{lemmaa}, then by Corollary \ref{cor: extremal structure}, the total number of red $r$-cliques and blue edges is at most
  \begin{align*}
   & \binom{\lceil k/2\rceil}{r}+(n'-\lceil k/2\rceil)\binom{\lfloor k/2\rfloor -1}{r-1}+(n-n')\left(\binom{\lf k/2 \rf -1}{r-1}-1\right)\\
   <& \binom{\lceil k/2\rceil}{r}+(n-\lceil k/2\rceil)\binom{\lfloor k/2\rfloor -1}{r-1},
  \end{align*}
  and we are done.
  If $n'<N'_{r,k}$, then we have at most $\binom{N'_{r,k}}{r}+\binom{N'_{r,k}}{2}$ red $r$-cliques and blue edges in $G'$.
  Thus, the total number of red $r$-cliques and blue edges is at most 
  \begin{align*}
    &\binom{N'_{r,k}}{r}+\binom{N'_{r,k}}{2}+(n-n')\left(\binom{\lfloor k/2\rfloor-1}{r-1}-1\right)\\
   <& \binom{\lceil k/2\rceil}{r}+(n-\lceil k/2\rceil)\binom{\lfloor k/2\rfloor -1}{r-1}.
  \end{align*}
The inequality holds for sufficiently large $ n$, and we are done.
\end{proof}

\section{$2$-connected Tur\'an number of long Berge cycles}\label{4}

We will use the following strengthening of the Erd\H os-Gallai Theorem.
\begin{lemma}[Li and Ning \cite{li2021strengthening}]\label{lem: x,y path}
    Let $G$ be a $2$-connected graph on $n$ vertices, and $x,y\in V(G)$. If there are at least $\frac{n-1}{2}$ vertices in $V(G)\setminus \{x,y\}$ of degree at least $s$, then $G$ contains a $(x,y)$-path with at least $s+1$ vertices.
\end{lemma}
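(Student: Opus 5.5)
The plan is to argue by induction on $n$, in the spirit of the classical Erd\H{o}s--Gallai/Fan argument that a $2$-connected graph in which every vertex other than $x,y$ has degree at least $s$ contains an $(x,y)$-path with at least $s+1$ vertices. The new feature is that only ``half'' of the vertices are assumed to be heavy (degree at least $s$). The induction must therefore carry the counting hypothesis through graph decompositions rather than a pointwise degree condition. Small cases ($n\le s+1$, or $G$ a cycle) are handled directly, and we may add the edge $xy$ if absent, since this does not affect the existence of the required path.

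\emph{Step 1 (reduction along $2$-cuts).} Suppose $G$ has a $2$-cut $\{a,b\}$, and let $D_1,\dots,D_m$ be the pieces it separates. For each piece $D_i$, form $G_i=G[D_i\cup\{a,b\}]+ab$, which is $2$-connected. If $x,y$ lie in one piece $D_1$, then any $(x,y)$-path of $G_1$ that uses the virtual edge $ab$ can be rerouted through another piece $D_j$ by an $(a,b)$-path of length at least $2$. If $x$ and $y$ are separated, then $G_1,G_2$ get terminals $(x,a)$ or $(x,b)$ and $(b,y)$ or $(a,y)$, and the paths are concatenated. In either case, an $(x,y)$-path in $G$ is glued from paths in the $G_i$, with lengths adding up.

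The degree of a heavy vertex inside $D_i$ is unchanged in $G_i$, except possibly at $a,b$, which become terminals anyway. So we pick the piece (or pair of pieces) in which the heavy vertices make up at least half of the non-terminal vertices. The inequality ``at least $(n-1)/2$ heavy vertices'' is an average over the pieces, so some piece satisfies it relative to its own order. We then apply induction there, and the concatenation only lengthens the path.

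\emph{Step 2 (the $3$-connected case).} Take a longest $(x,y)$-path $P=v_0v_1\cdots v_p$ with $v_0=x$, $v_p=y$, and suppose $p<s$. For a heavy vertex $w$ off $P$, the component of $G-V(P)$ containing $w$ attaches to $P$ at at least three vertices by $3$-connectivity. Maximality of $P$ forces the attachment points, and the neighbours of $w$ on $P$, to be pairwise non-consecutive. It also gives the standard crossing restrictions. With $d(w)\ge s>p$, the usual counting of ``gaps'' yields a contradiction.

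If all heavy vertices lie on $P$, I would use the rotation/crossing argument for interior vertices $v_i$ of $P$. Whenever $v_i$ has a neighbour $v_j$ with $j>i+1$, and some $v_{i'}$ with $i'<i$ has a neighbour beyond $v_{j}$, a crossing reroute produces a longer path. Because at least $(n-1)/2$ vertices are heavy, and at most $p-1<s$ internal vertices lie on $P$, some heavy vertex has at least $s-p+1\ge 2$ neighbours outside $P$ or ``wasted'' crossing chords. This is the contradiction we are after.

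\emph{Main obstacle.} The delicate point is Step 1: making the half-density hypothesis descend correctly to a piece. Both $a,b$ may be light, the virtual edge $ab$ changes degrees, and for $n$ odd versus even the bound $(n-1)/2$ has to be checked with parity care. I would first try choosing the $2$-cut $\{a,b\}$ so that one piece is an end-piece (a leaf of the SPQR-type decomposition). Then I would compare the heavy-vertex count in that leaf against its size, falling back to the complementary side otherwise. If this averaging fails, the alternative is to contract a whole piece to the edge $ab$, and to show that the contracted graph still has enough heavy vertices, since contraction only removes vertices whose heavy fraction is below one half.
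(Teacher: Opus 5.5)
The paper gives no proof of this lemma. It imports it from Li and Ning, where it is a genuine strengthening of Erd\H{o}s--Gallai: it is the tool behind their proof of Woodall's conjecture. So your sketch has to stand on its own, and both of its steps have real gaps.

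\textbf{Step~1.} The averaging claim is false. Suppose $x,y\in D_1$ and every piece $G_i$ fails its hypothesis. Then each $D_i$ contributes at most $|D_i|/2$ heavy non-terminals, so the heavy vertices outside $\{a,b\}$ number at most $(n-2)/2$. That is only half a vertex below $(n-1)/2$. Hence a single cut vertex that is heavy in $G$ but light in every piece breaks the count, and this is easy to arrange. Let $ab\notin E(G)$, let $b$ be light, and give $a$ exactly $\lceil s/2\rceil$ neighbours in each of $D_1,D_2$. Then $d_G(a)\ge s$, while $d_{G_i}(a)=\lceil s/2\rceil+1<s$ for $s\ge 4$, since the virtual edge adds only one. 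Now put exactly $|D_1|/2$ heavy vertices in $D_1\setminus\{x,y\}$ and $|D_2|/2$ in $D_2$. Then $G$ has $n/2\ge (n-1)/2$ heavy non-terminals, yet $G_i$ has only $|D_i|/2<(n_i-1)/2$. Your fallback, contracting an end-piece of low heavy density to the edge $ab$, fails for the same reason, because it is precisely $d(a)$ and $d(b)$ that drop. An induction along $2$-cuts would need a stronger invariant that tracks the degrees of the cut vertices, and the proposal does not supply one.

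\textbf{Step~2.} This is not yet an argument either. A heavy vertex $w$ off the longest path $P$ can have all of its at least $s$ neighbours inside its own component $H$ of $G-V(P)$. So $d(w)\ge s>p$ says nothing about how $H$ attaches to $P$; non-consecutiveness only bounds $|N(w)\cap V(P)|$. What you would actually need is a long path through $H$ between two attachment vertices. Under the half-density hypothesis, $H$ may consist largely of light vertices, which is exactly why the pointwise Erd\H{o}s--Gallai argument does not transfer. Likewise, when all heavy vertices lie on $P$, a heavy $v_i$ with neighbours off $P$ or extra chords is no contradiction. A detour $v_i\to H\to v_j$ lengthens $P$ only if it has more than $j-i$ edges, and nothing in your gap or crossing count forces that.

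So the proposal names the natural strategy but supplies neither a valid inductive invariant for $2$-cuts nor a working argument for the $3$-connected core. That is where the actual content of Li and Ning's theorem lies.
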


Furthermore, we also need the following stability result about the Tur\'an number of $C_{\ge k}$.

\begin{thm}[F\"uredi, Kostochka and Verstra\"ete \cite{furedi2016stability}]\label{thm: more general stability}
    Let $n\geq k\geq 5$ and $t=\lf \frac{k-1}{2}\rf$. If $G$ is an $n$-vertex $2$-connected graph with no cycle of length at least $k$, then
    $e(G)\leq e(W(n,k,t-1)$, unless
\begin{itemize}
    \item $k=2t+1, k\neq 7$, and $G\subseteq W(n,k,t)$ or
    \item $k=2t+2$ or $k=7$, and $G-A$ is a star forest for some $A\subseteq V(G)$ of size at most $t$.
\end{itemize}
\end{thm}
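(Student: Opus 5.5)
The plan is to run Kopylov's closure/disintegration method together with a counting argument that forces the structure. Throughout, $G$ is $2$-connected, has no cycle of length at least $k$, and satisfies $e(G)>e(W(n,k,t-1))$. The first step is to pass to the $k$-closure $H=\mathrm{cl}_k(G)$: repeatedly add an edge $xy$ whenever $x,y$ are non-adjacent and $d(x)+d(y)\ge k$. By the Bondy--Chv\'atal/Woodall-type lemma used by Kopylov, adding such an edge creates no cycle of length at least $k$. So $H$ is still $2$-connected, still $\cC_{\ge k}$-free, contains $G$, and has at least as many edges. In $H$ every two vertices of degree at least $k/2$ are adjacent, so the set $A$ of such vertices spans a clique. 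Since $H$ has no cycle of length $\ge k$ and is $2$-connected, we should get $|A|\le t$ (a clique on more than $t$ high-degree vertices, together with $2$-connectivity and Lemma \ref{lem: x,y path}, would close a long cycle). I expect this step to be delicate when $k$ is even.

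The second step bounds the edges of $H-A$. Every vertex outside $A$ has degree less than $k/2$ in $H$. I would apply a $t$-disintegration: repeatedly delete vertices of degree at most $t$ in the remaining graph, and let $C$ be what is left. If $C$ is nonempty, it has minimum degree at least $t+1$. Kopylov's lemma for $2$-connected graphs then gives a cycle of length at least $\min\{n,2(t+1)\}\ge k$, a contradiction. So $H$ fully $t$-disintegrates and $e(H)\le \binom{k-t}{2}+t(n-k+t)=e(W(n,k,t))$. Running the same argument with threshold $t-1$ shows the following: if fewer than $t$ vertices of $H$ have degree at least $k/2$, then $e(H)\le e(W(n,k,t-1))$. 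Hence the hypothesis forces $|A|=t$ exactly.

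The third step takes $|A|=t$ and identifies the structure. Each vertex outside $A$ contributes at most about $t$ edges, and the excess $e(G)-e(W(n,k,t-1))$ is linear in $n$. So all but $O_k(1)$ vertices of $V\setminus A$ must be adjacent to all of $A$. Now consider $G-A$. If it had a path on three vertices $x y z$ with $x,y,z$ all adjacent to all of $A$, then routing through $A$ produces a cycle of length $2t+2\ge k$. It remains to check components of $G-A$ containing a vertex not fully joined to $A$; I would handle these by local exchange arguments using $2$-connectivity.

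When $k=2t+1$ and $k\neq 7$, the stronger bound $2t+1$ on cycle length also forbids any edge of $G-A$ outside a bounded set $Y$ with $|Y|\le k-2t=1$. This gives $G\subseteq W(n,k,t)$. When $k=2t+2$, a single extra edge per component is allowed, and $G-A$ is a star forest. The case $k=7$ ($t=3$) is exceptional, and I would check it by hand in the same way as $k=2t+2$.

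The main obstacle is the third step. Vertices of $V\setminus A$ that are only partially joined to $A$ could a priori support longer structures in $G-A$, and excluding them requires a careful cycle-rerouting argument rather than pure counting.
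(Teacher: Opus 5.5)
\textbf{Verdict: the proposal has genuine gaps; several intermediate claims are false, and the statement as quoted is itself false for small $n$.}

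The paper does not prove this statement. It imports it from F\"uredi, Kostochka and Verstra\"ete, whose theorem assumes $n\ge 3t$, not $n\ge k$. As quoted, the statement fails for small $n$. Take $n=k=9$ (so $t=4$) and $G=W(9,9,2)$:
\begin{itemize}
\item $G$ is $2$-connected with circumference $8$;
\item $e(G)=25>24=e(W(9,9,3))$;
\item $G$ contains $K_7$, so it is not a subgraph of $W(9,9,4)=K_4\vee 5K_1$, whose clique number is $5$.
\end{itemize}
This does not affect the paper, which only applies the theorem with $n\ge 4k$.

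Your argument never uses a lower bound on $n$, so it must break somewhere, and it does in Steps 1--2.
\begin{itemize}
\item \emph{Step 1.} The claim $|A|\le t$ is false without the edge count. $W(n,k,t-1)$ is $2$-connected, $k$-closed and $\cC_{\ge k}$-free, yet its vertices of degree $\ge k/2$ form a clique on $k-t+1>t$ vertices. A clique on up to $k-1$ vertices closes no long cycle.
\item \emph{Step 2, nonempty core.} A nonempty $t$-core $C$ gives no contradiction. Dirac's bound is $\min\{|V(C)|,2\delta(C)\}$, not $\min\{n,2\delta(C)\}$, and it requires $C$ itself to be $2$-connected. The $t$-core of $W(n,k,t-1)$ is $K_{k-t+1}$.
\item \emph{What Kopylov's method actually gives.} A nonempty $t$-core of such a $k$-closed graph is a clique $K_{k-a}$ with $a\ge 2$, so $e\le e(W(n,k,a))$. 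Ruling out $2\le a\le t-1$ is exactly where you need $e(G)>e(W(n,k,t-1))$, convexity of $e(W(n,k,a))$ in $a$, and $n\ge 3t$.
\end{itemize}

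Your deduction that the hypothesis forces $|A|=t$ is also false. For $k\in\{7,8\}$ (so $t=3$), let $G=K_2\vee mK_2$.
\begin{itemize}
\item $G$ is $2$-connected and $k$-closed: non-adjacent pairs have degree sum $6$.
\item It has circumference $6$.
\item It has $\frac{5n}{2}-4>e(W(n,k,2))$ edges for large $n$.
\item Yet only the two hubs have degree $\ge k/2$.
\end{itemize}
This is exactly a type-(b) graph, and it is the reason $k=7$ is exceptional. So the case you postpone as ``by hand'' is precisely one your Step 2 claims cannot occur.

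Step 3 has further problems.
\begin{itemize}
\item The excess $e(G)-e(W(n,k,t-1))$ is only guaranteed to be $\ge 1$, not linear in $n$. Take $W(n,2t+1,t)$ with $t\ge 3$ and delete the edges from one vertex of $K_t$ to $n-t-4$ vertices of the independent set. The result still satisfies every hypothesis, with exactly one edge to spare, but linearly many vertices are not fully joined to $A$.
\item These edges reappear in the closure, but you would then have to work in $H$ and prove that vertices of $A$ become universal there. You do neither.
\item The treatment of partially joined vertices and of the components of $G-A$ is deferred to unspecified ``local exchange arguments''. That is the bulk of the actual proof. Without it, neither $G\subseteq W(n,k,t)$ nor ``$G-A$ is a star forest'' is established.
\end{itemize}
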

For the case when $k=7$, there is a more specific result for its structure. First, we need to define the following families of graphs.
Each $G\in \cG_2(n,k)$ is defined by a partition $V(G)=A\cup B\cup J$, $|A|=t$ and a pair $a_1\in A, b_1\in B$ such that $G[A]=K_t$, $G[B]$ is the empty graph, $G(A,B)$ is a complete bipartite graph and for every $c\in J$ one has $N(c)=\{a_1,b_1\}$.
Each $G\in \cG_3(n,k)$ is defined by a partition $V(G)=A\cup B\cup J$, $|A|=t$ such that $G[A]=K_t$, $G(A,B)$ is a complete bipartite graph, and
\begin{itemize}
    \item $G[J]$ has more than one component
    \item all components of $G[J]$ are stars with at least two vertices each
    \item  there is a $2$-element subset $A'$ of $A$ such that $N(J)\cap (A\cup B)=A'$
    \item for every component $S$ if $G[J]$ with at least $3$ vertices, all leaves of $S$ are adjacent to the same vertex $a(S)$ in $A'$.
\end{itemize}

Note that this is a general definition in \cite{furedi2016stability}, but we will only consider $\mathcal{G}_2(n,6)$ and $\mathcal{G}_3(n,6)$. In particular, $t=2$ thus $A=A'$ in our case.

\begin{thm}[F\"uredi, Kostochka and Verstra\"ete \cite{furedi2016stability}]\label{thm: for small k=7}
Let $n \geq 7$, and $G$ be an $n$-vertex $2$-connected graph with no cycle of length at least $7$. Then either $e(G) \leq e(W(n,7,2))$ or $G$ is a subgraph of a graph in $\mathcal{G}(n,7)$, where
    $$\mathcal{G}(n,7) := \{W(n,7,3)\} \cup \{W(n,6,2)\} \cup \mathcal{G}_2(n,6) \cup \mathcal{G}_3(n,6).$$
\end{thm}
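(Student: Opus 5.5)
The plan is to derive the statement from the general stability result, Theorem \ref{thm: more general stability}, with $k=7$, so $t=3$, and then refine the structure by hand. First compute the threshold: $e(W(n,7,2))=\binom{5}{2}+2(n-5)=2n$, and $e(W(n,7,3))=3+3(n-3)=3n-6$. So assume $e(G)>2n$. Theorem \ref{thm: more general stability} (the case $k=7$) then gives a set $A$ with $|A|\le 3$ such that $G-A$ is a star forest. The work is to show that $G$ lies in one of the four listed families. We split into the cases $|A|=3$ and $|A|\le 2$. In each case we count edges vertex by vertex on $V(G)\setminus A$ and then use $2$-connectivity together with the absence of a cycle of length at least $7$ to exclude extra edges.

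\emph{Case $|A|=3$.} Each vertex $x\notin A$ has $d_A(x)\le 3$, and $G-A$ has fewer than $n-3$ edges. So $e(G)>2n$ forces at least a linear number of vertices with $d_A(x)\ge 2$, and then, after a short count, many vertices with $d_A(x)=3$. These vertices act as universal connectors: with three or more of them, any path through $A$ can be rerouted. Now suppose $G-A$ contains an edge $xy$. By $2$-connectivity we can attach $xy$ to $A$ at two distinct vertices, via $x$ and via $y$ or via a star centre. Closing this up through the remaining vertex of $A$ and a few connectors gives a cycle of length at least $7$. We also argue that $G[A]$ may be completed, since adding edges inside $A$ creates no long cycle in this configuration. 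Together these show $G\subseteq K_3\vee\overline{K}_{n-3}\subseteq W(n,7,3)$.

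\emph{Case $|A|\le 2$.} Every vertex outside $A$ has at most $2$ neighbours in $A$ plus its star edges, so $e(G)\le 2(n-2)+(n-3)+1$, and $e>2n$ is still possible. Let $A=\{a_1,a_2\}$. Since $G$ is $2$-connected, every star component $S$ of $G-A$ sends edges to both $a_1$ and $a_2$. Let $B$ be the set of vertices adjacent to both $a_1$ and $a_2$; the count shows $B$ is large. Next we determine how a star $S$ with at least one edge can attach. If $S$ is joined to $A$ through two different vertices of $S$ (for instance both ends of an edge), and there are enough vertices of $B$, then routing $a_1\to S\to a_2\to b\to a_1\to\cdots$ through $B$ produces a cycle of length at least $7$ unless the attachment is very restricted. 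We intend to prove that only three configurations survive. In the first, a single edge sits inside a $K_2$ joined to both of $a_1,a_2$, which is $W(n,6,2)$. In the second, all non-$B$ vertices are pendant-like, each with neighbourhood $\{a_1,b_1\}$, which is $\mathcal G_2(n,6)$. In the third, the stars of $G[J]$ attach to $A$ as prescribed in $\mathcal G_3(n,6)$: the leaves of each star with at least $3$ vertices go to a common vertex $a(S)\in A$, and the centre goes to the other.

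The main obstacle is the case analysis for $|A|=2$, namely classifying how stars attach without creating a $7$-cycle. The tricky part is distinguishing the $\mathcal G_3$ configuration (several stars, leaves to one side) from the forbidden ones. The approach is to fix two vertices $b,b'\in B$ and, for each possible attachment pattern of a star $S$ (which vertices of $S$ see $a_1$ and which see $a_2$), write down an explicit cycle $a_1\,S\,a_2\,b\,\ldots$ of length at least $7$, or else verify that the pattern is one of those allowed in $\mathcal G_2(n,6)$, $\mathcal G_3(n,6)$ or $W(n,6,2)$. The condition $e(G)>2n$ is used only to guarantee enough vertices in $B$ to perform these reroutings.
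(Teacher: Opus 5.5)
The paper gives no proof of this statement; it is quoted from F\"uredi--Kostochka--Verstra\"ete. Your argument therefore has to stand on its own, and its case $|A|=3$ does not. The witness $A$ from Theorem~\ref{thm: more general stability} is not canonical: any superset of a witness, of size at most $3$, is again a witness. So this case must also handle $\mathcal{G}_3(n,6)$-type graphs, and for them both your counting step and your conclusion fail.

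Take $G=K_2\vee sK_2$ with $s\ge 4$: an edge $a_1a_2$ joined to all endpoints of disjoint edges $u_1v_1,\dots,u_sv_s$.
\begin{itemize}
\item We have $n=2s+2$ and $e(G)=5s+1>2n$, and $G$ is $2$-connected.
\item Every cycle meets $\{a_1,a_2\}$ at most twice, so it uses at most two of the edges $u_iv_i$ and has length at most $6$.
\item The theorem may return $A=\{a_1,a_2,u_1\}$. Then only $v_1$ has $d_A=3$; every other vertex outside $A$ has $d_A=2$.
\end{itemize}
The excess over $2n$ is carried by star-forest edges: each $K_2$-component fully joined to a pair in $A$ contributes $5$ edges on $2$ vertices. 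So $e(G)>2n$ does not force many $A$-universal vertices, and your rerouting through ``universal connectors'' has nothing to work with. Moreover $G$ has a matching of size $s\ge 4$, so $G\not\subseteq K_3\vee\overline{K}_{n-3}=W(n,7,3)$, contrary to the conclusion you draw in this case. This is exactly why $k=7$ is exceptional: $\mathcal{G}_3(n,6)$-graphs can have about $5n/2>2n$ edges.

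Choosing $A$ of minimum size does not rescue the step. The bound $e(G)\le 3+n_1+2n_2+3n_3+e(G-A)$ with $e(G-A)\le n-4$ is compatible with $n_3=0$. What is missing is a genuine structural argument, using the cycle condition and $2$-connectivity, for the following: if fewer than two $A$-universal vertices lie outside some nontrivial component of $G-A$, then one vertex of $A$ can be absorbed, i.e.\ some $2$-set is already a witness.

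Your case $|A|\le 2$ also needs repair. With $A=\{a_1,a_2\}$ a cycle cannot continue after returning to $a_1$, so the template $a_1\to S\to a_2\to b\to a_1\to\cdots$ never produces length $7$. For a component $S$ of $G-A$, let $\lambda(S)$ be the number of vertices on a longest path in $S$ from a neighbour of $a_1$ to a neighbour of $a_2$. Every cycle either:
\begin{itemize}
\item stays inside $A$ plus one component, and then has length at most $6$; or
\item passes through exactly two components $S,S'$, and then has length at most $2+\lambda(S)+\lambda(S')$.
\end{itemize}
Having many vertices adjacent to both $a_1,a_2$ therefore does not help by itself; an isolated vertex of $G-A$ has $\lambda=1$. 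The right condition is $\lambda(S)+\lambda(S')\le 4$ for all distinct components, while $2$-connectivity gives $\lambda(S)\ge 2$ for every nontrivial $S$. Hence either one star has $\lambda=3$ and all other components are single vertices, or $\lambda\le 2$ throughout.

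The first alternative is missing from your three configurations, although it occurs with $e(G)>2n$. For example, $K_2\vee(K_{1,m}\cup\overline{K}_q)$ with $m\ge 4$ has $2n+m-3$ edges and no cycle of length at least $7$. It is a subgraph of $W(n,7,3)$, with the centre of the star and $a_1,a_2$ forming the triangle. Indeed $W(n,7,3)$ itself has a $2$-element witness, namely two of its three apex vertices.

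The second alternative gives:
\begin{itemize}
\item $\mathcal{G}_3(n,6)$ when there are at least two nontrivial components;
\item a subgraph of $W(n,6,2)$ or of a member of $\mathcal{G}_2(n,6)$ when there is exactly one (both have fewer than $2n$ edges);
\item $G\subseteq W(n,7,3)$ when $G-A$ is edgeless.
\end{itemize}
With this analysis in place, the whole proof reduces to producing a $2$-element witness, which is precisely the step your case $|A|=3$ does not supply.
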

\noindent
Then, we have the following lemma.

\begin{lemma}\label{lem: when k=8}
    Suppose $G$ is a $2$-connected, $n$-vertex graph with no cycle of length at least $k$, where $k$ is odd, $n\geq 4k$ and $e(G)\geq ((k+1)/2-4/3)n$.
    Then for every two vertices $u,v\in V(G)$, there is a path with at least $(k+1)/2+1$ vertices connecting $u,v$.
\end{lemma}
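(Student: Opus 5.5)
The plan is to use the edge condition to pin down the structure of $G$ via the stability results (Theorem~\ref{thm: more general stability}, and Theorem~\ref{thm: for small k=7} when $k=7$), and then build the $u$--$v$ path by hand inside that structure. Lemma~\ref{lem: x,y path} will not help directly: in the extremal structure almost every vertex has degree exactly $t$, one less than the degree $t+1$ that lemma would need.

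Write $k=2t+1$, so $t=(k-1)/2$ and the hypothesis reads $e(G)\ge (t-\tfrac13)n$.

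\emph{Step 1: rule out the first alternative of the stability theorem.} Compute
\[
e(W(n,k,t-1))=\binom{t+2}{2}+(t-1)(n-t-2).
\]
Subtracting $(t-1)n$ leaves $\binom{t+2}{2}-(t-1)(t+2)=(t+2)(3-t)/2$, which is at most $5$. Hence $e(W(n,k,t-1))\le (t-1)n+5<(t-\tfrac13)n\le e(G)$ for $n\ge 4k$.

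\emph{Step 2: conclude $G\subseteq W(n,k,t)$.} For $k\neq 7$, Theorem~\ref{thm: more general stability} with $k=2t+1$ then gives this directly. For $k=7$ we have $t=3$, and Theorem~\ref{thm: for small k=7} places $G$ inside a member of $\mathcal G(n,7)$. The members $W(n,6,2)$, $\mathcal G_2(n,6)$ and $\mathcal G_3(n,6)$ all have at most $2n+O(1)$ edges. This is below $\tfrac83 n$ for $n\ge 28$, so again $G\subseteq W(n,7,3)$.

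\emph{Step 3: count full $Z$-vertices.} In $W(n,k,t)=K_t\vee[(n-t-1)K_1\cup K_1]$, write $X$ for the $t$-clique and $Z$ for the remaining $n-t$ vertices, which are pairwise nonadjacent and each adjacent only to $X$. Since $e(W(n,k,t))=tn-O(t^2)$, the graph $G$ misses at most $n/3+O(t^2)$ edges of $W(n,k,t)$. Call $z\in Z$ \emph{full} if $N_G(z)=X$. At most $n/3+O(t^2)$ vertices of $Z$ are non-full, so at least $2n/3-O(t^2)$ are full. Turning this into an explicit bound such as "at least $t+2$ full vertices" for all $n\ge 4k$ is the only quantitative point to check. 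The missing-edge count should be done carefully here rather than with $O(\cdot)$.

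\emph{Step 4: build the path.} Fix $u,v$. If $u\in Z$, then $u$ has at least two neighbours by $2$-connectivity, and all of them lie in $X$. Choose $x_u\in N(u)\cap X$ and $x_v\in N(v)\cap X$ with $x_u\neq x_v$; this is possible since each of $u,v$ has at least two neighbours in $X$. If $u\in X$, take $x_u=u$, and likewise for $v$. Order $X$ as $x_u=x_1,x_2,\dots,x_t=x_v$. Join consecutive $x_i,x_{i+1}$ through distinct full vertices $z_i\notin\{u,v\}$, which Step~3 supplies. The result is the path $u\,x_1 z_1 x_2\cdots z_{t-1} x_t\, v$. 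When $u,v\notin X$ it has $2t+1\ge t+2=(k+1)/2+1$ vertices. When an endpoint lies in $X$ it loses at most two vertices, so it still has at least $2t-1\ge t+2$ vertices, using $t\ge 3$.

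I expect the main obstacle to be the case analysis of Step~2 for $k=7$, namely verifying the edge counts of the families $\mathcal G_2(n,6)$ and $\mathcal G_3(n,6)$, together with making the Step~3 count explicit. The path construction itself is routine.
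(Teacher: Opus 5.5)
Your route is the paper's route. You use the stability theorems to force $G\subseteq W(n,k,t)$, count vertices of $Z$ adjacent to all of $X$ (the paper's ``common neighbours of $X$''), use $\delta(G)\ge 2$, and thread the path through $X$. Steps 1, 3 and 4 are fine. The quantitative point you flagged in Step 3 is immediate: $G$ misses at most $\binom{t}{2}+t(n-t)-(t-\frac13)n=\frac n3-\binom{t+1}{2}$ edges of $W(n,k,t)$. Hence at least $\frac{2n}{3}-t\ge t+2$ vertices of $Z$ are full when $n\ge 4k=8t+4$.

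One assertion in Step 2 is false: members of $\mathcal{G}_3(n,6)$ need not have only $2n+O(1)$ edges. Let $J$ consist of many two-vertex star components whose endpoints are both joined to both vertices of $A=A'$. Each such component carries $5$ edges, giving about $\frac52 n$ edges in total, while every cycle still has length at most $6$. The correct bound, which the paper proves, is $e\le\frac52 n+1$: at most $2$ edges are charged to each vertex of $B$ and of the larger stars, and at most $\frac52$ to each vertex of a $K_2$-component. Since $\frac52 n+1<\frac83 n$ for $n>6$, your conclusion $G\subseteq W(n,7,3)$ still holds. But this comparison is exactly what dictates the constant $\frac43$ in the hypothesis, so it must be checked with the right coefficient rather than waved through.

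Finally, state explicitly that Step 4's use of $t\ge 3$ requires $k\ge 7$. This is a real restriction, not a convenience. For $k=5$ the lemma fails: in $K_2\vee(n-2)K_1$, the two vertices of the $K_2$ are joined by no path on more than $3$ vertices, yet all hypotheses hold. The paper also only treats $k\ge 7$, which is all the application needs.
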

\begin{proof}
First, we deal with the case $k\geq 9$.
If $G$ contains no cycles of length at least $k$, then since $e(G)\geq ((k+1)/2-\frac{4}{3})n$, according to Theorem \ref{thm: more general stability}, $G$ is the subgraph of $W(n,k-1, (k-1)/2)$ when $k\geq 9$ is odd. 

Let $X,Y,Z$ be the set of vertices as the definition of $W(n,k-1,(k-1)/2-1)$, and $|X|=(k-1)/2-1$, $|Y|=0$ and $|Z|=n-(k-1)/2+1$. 
By the lower bound of $e(G)$ and $n\geq 4k$, one can easily check that the number of common neighbours of $X$ is at least $k$.
Moreover, we have $\delta(G)\geq 2$, otherwise $G$ is not $2$-connected.
Then one can easily check that for every two vertices $u,v$, there is a path with at least $(k+1)/2+1$ vertices connecting them.

When $k=7$, then we have $e(G)\geq (4-4/3)n=8n/3$. Note that $e(W(n,7,2))=2(n-2)+\binom{2}{2}+\binom{3}{2}=2n$. Then by Theorem \ref{thm: for small k=7}, $G$ is a subgraph of a graph in $\mathcal{G}(n,7)$. With simple calculation, we obtain that when $n\geq 4k=28$,
    \begin{eqnarray*}
    &{}& e(W(n,6,2))=2(n-2)+2=2n-2<8n/3. \\
    &{}& e(\cG_2(n,6))=2(n-2-|J|)+\binom{2}{2}+2|J|=2n-3<8n/3. 
    \end{eqnarray*}

For $G\in \cG_3(n,6)$, let $J_1$ be the union of star components $S$ in $G[J]$ with at least $3$ vertices, and $J_2$ be the star components $S$ in $G[J]$ with $2$ vertices.
Then, the number of edges incident to $J_1$ is at most $2|J_1|$, because we can stepwise delete the leaf vertices, and at each step we delete at most two edges. 
After that, when deleting the center of a star of $J_1$, we delete at most two edges.
Each component in $J_2$ is incident with at most $5$ edges, thus the number of edges incident to $J_2$ is at most $5|J_2|/2$.
Each vertex in $B$ has degree two since they are adjacent only to the two vertices in $A$. Therefore, we have
$$e(G)\leq 2(n-|A|-|B|-|J_1|-|J_2|)+2|B|+2|J_1|+5|J_2|/2+1\leq 5n/2+1<8n/3.$$   

    Thus, $G$ can only be a subgraph of $W(n,7,3)$, then, similar to the case when $k\geq 9$, it can be easily checked that for every two vertices $u,v$, there is a path with at least $(k+1)/2+1$ vertices connecting them. 
\end{proof}

Recall that
a \emph{block} in a graph \(G\) is a maximal connected subgraph \(G'\) with no cut vertices (of \(G'\)) \cite{FKL2}. A block is called a \emph{leaf block} if it contains at most one cut vertex of $G$. 
It is well-known that if a non-empty graph is not $2$-connected, then it has at least two leaf blocks.

Let us continue with the poof of Theorem \ref{thm: 2-conn Bergee cycles}, which we restate here for convenience. 

\begin{thmbis}{thm: 2-conn Bergee cycles}
    For all integers $n$, $k\geq 2r+2\geq 8$, there exists $N_{r,k}$ such that if $n\geq N_{r,k}$ and $\cH$ is an $n$-vertex $2$-connected $r$-graph with no Berge cycle of length at least $k$, then 
    $$e(\cH)\leq \binom{\lf (k-1)/2 \rf }{r-1}(n-\lc (k+1)/2\rc)+\binom{\lc (k+1)/2\rc}{r}.$$
\end{thmbis}

\begin{proof}
Let $\cH$ be a $2$-connected $n$-vertex Berge-$\cC_{\ge k}$-free $r$-graph. We construct a $\cC_{\ge k}$-free red-blue graph $G^{rb}$ such that $e(\cH)\leq g_r(G^{rb})$ using Proposition \ref{prop: matching}.
We pick among such graphs $G^{rb}$ a red-blue graph such that $G$ has the fewest number of components, and for graphs with the same number of components, we pick the one with the fewest blocks.
We still apply Proposition \ref{feas} with $P(G)=N(K_r,G)$, thus $P^{rb}(G^{rb})=g_r(G^{rb})$.
Recall that $P^*$ is feasible. By Theorem \ref{thm: ai-2-connect cycles}, if $G$ is $2$-connected, then we can complete the proof according to Lemma \ref{lemmaa}\,\textbf{(ii)}. Therefore, we can assume that $G$ is not $2$-connected. 
We may assume the number of red $r$-cliques plus blue edges in $G^{rb}$ is more than $\binom{\lfloor (k-1)/2\rfloor}{r-1}(n-\lceil (k+1)/2\rceil)+\binom{\lceil (k+1)/2\rceil}{r}$, otherwise we are done.

Since $G$ is not $2$-connected, there exist at least two leaf blocks.

We partition the leaf blocks into three types: nice, strong, and bad. For bad blocks, we further define a subclass called troublesome. The definitions are given below.
\begin{itemize} 
    \item For a leaf block $B$ of $G$, we say it is \textit{nice} if each non-cut vertex in it has degree at least $\lf k/2\rf$.
    \item For a leaf block of $G$ that is not nice, we remove the non-cut vertices with degree less than $\lf k/2\rf$ one by one, until we are left with a subgraph $B'$. We call $B$ \textit{strong} if 
    $B'$ contains a non-cut vertex of $G$. 
    \item Otherwise, we call the leaf block $B$ \textit{bad}.
    \begin{itemize}
        \item For a bad leaf block $B$, suppose it has at least $N_{r,k}\geq 4k$ vertices, where $N_{r,k}$ is the constant in Lemma \ref{lemmaa}, and $B$ has at least $(\lf k/2\rf -\frac{4}{3})(|V(B)|-1)$ edges. Then we call $B$ \textit{troublesome}.
    \end{itemize}
\end{itemize}

\begin{clm}\label{clm: nice block find path}
    In a nice or strong leaf block, for every two vertices $x,y$, there exists a path connecting $x$ and $y$ with at least $\lf k/2 \rf+1$ vertices.
\end{clm}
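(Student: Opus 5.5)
Fix a nice or strong leaf block $B$ with cut vertex $c$. The idea is to apply Lemma \ref{lem: x,y path} (Li--Ning) inside a suitable $2$-connected graph, namely $B$ itself or the reduced subgraph $B'$. Blocks with at least three vertices are $2$-connected. If $B$ is a single edge, it cannot be nice or strong for $k\ge 8$: its non-cut vertex has degree $1<\lf k/2\rf$, and deleting that vertex leaves no non-cut vertex. So we may assume $|V(B)|\ge 3$ and $B$ is $2$-connected. Lemma \ref{lem: x,y path} asks for at least $\frac{|V(B)|-1}{2}$ vertices outside $\{x,y\}$ of degree at least $s=\lf k/2\rf$ (degree taken in $B$). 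It then yields an $(x,y)$-path with at least $\lf k/2\rf+1$ vertices, which is exactly what the claim needs.

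\emph{Nice case.} Every non-cut vertex of $B$ has degree in $G$, hence in $B$, at least $\lf k/2\rf$, because a non-cut vertex of a leaf block has all its neighbours in $B$. So all vertices of $B$ except possibly $c$ have degree at least $\lf k/2\rf$. Excluding $x,y$, at least $|V(B)|-3$ such vertices remain. When $|V(B)|\ge 5$ we have $|V(B)|-3\ge \frac{|V(B)|-1}{2}$, and the lemma applies. Small blocks cannot occur: a non-cut vertex has degree at least $\lf k/2\rf\ge r+1\ge 4$, so $|V(B)|\ge \lf k/2\rf+1\ge 5$.

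\emph{Strong case.} By construction, every remaining non-cut vertex of $B'$ has degree at least $\lf k/2\rf$ within $B'$. The main point is to get $2$-connectivity of $B'$ and to route paths from the deleted vertices. I would argue as follows.
\begin{enumerate}
\item Each deleted vertex had degree less than $\lf k/2\rf$ at its deletion time. Deleting such vertices one by one cannot destroy the $2$-connected part containing the surviving non-cut vertex. So I would take the block $B''$ of $B'$ that contains a surviving non-cut vertex of $G$ (it exists by strongness), and work inside $B''$.
\item All vertices of $B''$ other than at most one (the vertex attaching it toward $c$ or other parts of $B'$) have degree at least $\lf k/2\rf$ within $B''$. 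This needs care: a vertex whose neighbours lie in other blocks of $B'$ is a cut vertex of $B'$. If that becomes an issue, I would instead choose $B''$ to be a leaf block of $B'$, as in Lemma \ref{lem: connect_find_path}.
\item For arbitrary $x,y\in V(B)$, use the $2$-connectivity of $B$ and Menger's theorem to find two disjoint paths from $\{x,y\}$ to $V(B'')$, ending at distinct vertices $x',y'$ (one path may be trivial if $x\in B''$).
\item Apply Lemma \ref{lem: x,y path} in $B''$ to $x',y'$ to get an $(x',y')$-path with at least $\lf k/2\rf+1$ vertices, and concatenate it with the two connecting paths.
\end{enumerate}
The degree count in $B''$ works as in the nice case once $|V(B'')|\ge \lf k/2\rf+1\ge 5$. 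This bound holds because a vertex of degree at least $\lf k/2\rf$ in $B''$ has at least $\lf k/2\rf$ neighbours there.

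\emph{Main obstacle.} I expect the hardest step to be establishing that the reduced graph contains a $2$-connected piece in which almost all vertices have high degree. The degenerate case $x,y$ both outside $B''$ but linked only through a single attachment vertex must also be ruled out. Here I would use that $B$ is $2$-connected: no single vertex separates $B\setminus B''$ from $B''$, so the two Menger paths to distinct vertices of $B''$ always exist.
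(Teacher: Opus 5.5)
Your proposal is correct and takes essentially the paper's route. You apply the Li--Ning lemma (Lemma \ref{lem: x,y path}) to $B$ in the nice case, and to a leaf block of the pruned graph $B'$ in the strong case, then attach $x,y$ to that block by two disjoint paths from Whitney/Menger. The one point you hedge on should be stated explicitly: choose a leaf block of $B'$ that avoids the cut vertex $c$ of $G$ (unless $c$ is that block's own attaching vertex), as the paper does. This choice is what guarantees at most one low-degree vertex in the block and at least $\lf k/2\rf+1$ vertices, whereas a leaf block containing $c$ could even be a single edge.
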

\begin{proof}[Proof of Claim]
Suppose $B$ is a strong leaf block with $u$ as the unique cut vertex, and $B'$ is the subgraph of $B$ where every non-cut vertex has degree at least $\lf k/2\rf$.
Then note that $B'$ is nonempty but may not be $2$-connected, nor even connected.
But when $B'$ is not $2$-connected, there always exists a leaf block in $B'$ that does not contain $u$, denoted by $B_1'$. And if $B'$ is $2$-connected, then we rename $B'$ as $B_1'$.

If $x,y$ are in $V(B_1')$, then we will apply Lemma \ref{lem: x,y path}. 
When deleting the vertices in $B\setminus B'$, the size of $B_1'$ is at least $\lf k/2\rf+1\geq 5$, and the number of vertices in $V(B_1')\setminus \{x,y\}$  with degree at least $\lf k/2\rf$ is at least $|V(B_1')|-3$, because the vertices in $V(B_1')\setminus \{x,y\}$ in addition to the unique cut vertex of $B_1'$ in $B'$ (which is $u$ when $B'$ is $2$-connected) have degree at least $\lf k/2\rf$. Since $|V(B_1')|-3\geq \frac{|V(B_1')|-1}{2}$ (resp. $|V(B')|-3\geq \frac{|V(B')|-1}{2}$), according to Lemma \ref{lem: x,y path}, there exists a path with at least $\lf k/2 \rf+1$ vertices connecting $x$ and $y$.

If $x\in V(B_1')$ but $y\notin  V(B_1')$, then since $B$ is $2$-connected, there exists a shortest path from $y$ to $B_1'$ avoiding $x$. Let $y'$, denote the intersection of $B_1'$ and this path, then $x,y'$ are in $B_1'$. As in the above case, there exists a path with at least $\lf k/2 \rf +1$ vertices connecting $x,y'$ in $B_1'$, thus we find a path with the desired length connecting $x,y$. 

If $x,y$ are both not in $V(B_1')$, then since $B$ is $2$-connected, Theorem \ref{Whitney} implies there exist two disjoint paths from $x,y$ to $B_1'$. Suppose that the two paths intersect $B_1'$ at $x'$ and $y'$, respectively, then by the above analysis, there exists a path with at least $\lf k/2 \rf +1$ vertices connecting $x',y'$, and it can be extended to a path with the desired length connecting $x$ and $y$.
Thus, for every two vertices $x,y\in B$, there exists a path with at least $\lf k/2 \rf+1$ vertices connecting them.

If $B$ is a nice leaf block, then $B=B'$ and we are done by Lemma \ref{lem: x,y path} with a similar analysis as above. 
\end{proof}

Our strategy is to delete the bad leaf blocks one by one, until there are no bad leaf blocks. This way we obtain a series of graphs $G=G_0,G_1,,G_2,\dots,G_M$ with $G_i\subseteq G_{i-1}$ for $i\geq 1$. We will analyse the structure of the remaining graph $G_M$.
Note that during the deletion process, deleting a whole leaf block may create new leaf blocks. We classify the new leaf blocks as described above.

During the deletion process, we have the following claim.
\begin{lemma}\label{lem: at most one nice or strong leaf block}
For $i=0,\dots,M$,
    there is at most one leaf block in $G_i$ that is nice or strong.
\end{lemma}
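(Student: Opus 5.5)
We argue by contradiction, mirroring the proof of the corresponding claim in Section~\ref{3} for connected hypergraphs. Suppose that for some $i$ the graph $G_i$ has two distinct leaf blocks $B_1,B_2$ that are nice or strong. Since we only delete bad leaf blocks, every vertex and edge of $G_i$ belongs to $G$, so the matching $M$ of Proposition~\ref{prop: matching} still assigns distinct hyperedges of $\cH$ to the edges of $B_1$ and $B_2$. Let $c_j$ denote the cut vertex of $B_j$ in $G_i$, when it exists. Put $S_j=V(B_j)\setminus\{c_j\}$ when $|S_j|\ge 2$, and otherwise take $S_j=V(B_j)$. A nice or strong leaf block has at least $\lf k/2\rf+1$ vertices, so we may take $S_1,S_2$ disjoint, each of size at least $2$. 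We choose them so that every path from $S_1$ to $S_2$ in $G_i$ passes through the cut vertices.

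Since $\cH$ is $2$-connected, Lemma~\ref{lem: 2 berge path conn} gives two disjoint Berge paths $Q_1$ from $x_1\in S_1$ to $y_1\in S_2$ and $Q_2$ from $x_2\in S_1$ to $y_2\in S_2$. Among all such pairs we take one minimizing the total number of defining hyperedges. Minimality lets us assume that only the first defining vertex of each $Q_j$ lies in $B_1$ and only the last lies in $B_2$; otherwise we shorten to a later, or earlier, defining vertex. We may also assume $x_1\ne x_2$ and $y_1\ne y_2$, since the two paths are disjoint.

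By Claim~\ref{clm: nice block find path}, $B_1$ contains a path $R_1$ joining $x_1,x_2$ with at least $\lf k/2\rf+1$ vertices, and $B_2$ contains a path $R_2$ joining $y_1,y_2$ with at least $\lf k/2\rf+1$ vertices. Replacing each edge of $R_1$ and $R_2$ by its matched hyperedge $M(\cdot)$ and concatenating with $Q_1$ and $Q_2$ gives a closed Berge walk. Its number of defining vertices is at least
\[
2\left(\lf k/2\rf+1\right)-0\ \ge\ k,
\]
since the endpoints are shared with $Q_1,Q_2$, which contribute at least the edges between. Precisely, the cycle has at least $\lf k/2\rf+\lf k/2\rf+2\ge k+1$ edges. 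It is therefore a Berge cycle of length at least $k$, a contradiction, provided all defining hyperedges are distinct.

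The main obstacle is exactly this distinctness. A hyperedge $h$ of $Q_1$ or $Q_2$ may coincide with $M(a)$ for an edge $a$ of $R_1$ or $R_2$. By the minimality of the $Q_j$, such an $h$ meets $B_1$ only if it is the first hyperedge of its path, and meets $B_2$ only if it is the last. Thus the only conflicts are $h=M(a)$ with $a=uu'$ in $B_1$ and $h$ the first hyperedge of some $Q_j$, or the symmetric situation in $B_2$.

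We handle these conflicts using the extremal choice of $G^{rb}$: fewest components, and then fewest blocks. Take a vertex $v\in h$ outside the current block, which exists since $h$ leads out of $B_1$. Replace $uu'$ by a blue edge $uv$, keeping the matching by sending $uv$ to $h$. This gives another $\cC_{\ge k}$-free red-blue graph with the same $g_r$. We then argue that either it has fewer components or blocks, contradicting the choice of $G^{rb}$, or $uu'$ is a bridge-like edge. In the latter case we re-route inside $B_1$ by applying Claim~\ref{clm: nice block find path} to a sub-block avoiding $uu'$, as in Section~\ref{3}. The $2$-connectivity of $B_1$ and its size ($\ge\lf k/2\rf+1\ge 5$) should make this step work.

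We must also check that the recolouring keeps the graph $\cC_{\ge k}$-free, or else apply the operation to the auxiliary graph before the deletions. Proving that this exchange really decreases the number of blocks, rather than merely preserving it, is the delicate point. If it fails, I would instead choose $x_j$ to be an endpoint in $h\cap B_1$ that avoids $a$, using $|h\cap B_1|\ge 2$ whenever $h=M(a)$.
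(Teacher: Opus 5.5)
\textbf{There is a genuine gap.} Your skeleton matches the paper's: two disjoint Berge paths from Lemma~\ref{lem: 2 berge path conn}, long inner paths from Claim~\ref{clm: nice block find path}, and the extremal choice of $G^{rb}$ to resolve clashes. But the step that carries the lemma is left unproved: that no connecting hyperedge $h$ equals $M(a)$ for an edge $a=uu'$ of $B_1$ (or of $B_2$). The exchange $uu'\mapsto uv$ with $v\in h\setminus V(B_1)$ handles the case where $B_1-uu'$ is still $2$-connected. The new graph still has a matching into $E(\cH)$ covering its edges, so it is automatically $\cC_{\ge k}$-free, and it has fewer components or blocks.

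When $B_1-uu'$ is not $2$-connected, neither of your remedies works as stated. Re-routing ``as in Section~\ref{3}'' does not transfer. There you only needed a long path \emph{starting} at one vertex of a bridge component. Here you need a long path between two prescribed endpoints $x_1,x_2$ avoiding $uu'$, and Lemma~\ref{lem: x,y path} cannot be applied to the non-$2$-connected graph $B_1-uu'$. The fallback of moving $x_j$ to another vertex of $h\cap B_1$ gives no control over whether the new path uses $a$, and truncating at $a$ costs one vertex per clash. For example, with $\ell=m=1$ and one clash in each block you get only $2\lfloor k/2\rfloor=k-1$ defining vertices when $k$ is odd.

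\textbf{The missing idea:} in this case you do not need $Q_1,Q_2$ at all. Suppose $B_1-uu'$ has a cut vertex $a$ (or a cut edge $a_1a_2$) separating $u$ from $u'$. The degree condition defining nice or strong blocks survives on each side, so Lemma~\ref{lem: x,y path} gives a $u$--$a$ path and an $a$--$u'$ path (resp.\ a $u$--$a_1$ path and an $a_2$--$u'$ path), each with at least $\lfloor k/2\rfloor+1$ vertices. Closed up by the edge $uu'$ itself (equivalently by $h=M(uu')$), these form a cycle of length at least $2\lfloor k/2\rfloor+1\ge k$, a contradiction. So clashes are impossible outright; this is Claim~\ref{clm: no such edge}, and afterwards your length count goes through.

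\textbf{A secondary problem} is your choice $S_j=V(B_j)\setminus\{c_j\}$. Minimality then does not stop $Q_j$ from passing through $c_j$ as an interior defining vertex. Also, if $B_1$ and $B_2$ share the cut vertex $v$, both $R_1$ and $R_2$ may run through $v$, so your closed walk need not be a Berge cycle. The paper takes $S_j=V(B_j)$ when the blocks are disjoint. In the shared case it uses $v$ as the common endpoint of the two inner paths, plus a single Berge path avoiding $v$, which gives at least $2\lfloor k/2\rfloor+1\ge k$ vertices.
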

\begin{proof}
Assume that $B_1$ and $B_2$ are nice or strong leaf blocks. If $B_i$ is strong, let us delete the non-cut vertices with degree less than $\lf k/2\rf $ one by one in $B_i$, and let
$B_i'$ denote one of the remaining leaf blocks afterwards. We rename $B_i'$ to $B_i$.

Since $\cH$ is $2$-connected, if $B_1$ and $B_2$ are disjoint, according to Lemma \ref{lem: 2 berge path conn} there exist two shortest disjoint Berge paths (with different defining hyperedges and defining vertices) connecting $B_1,B_2$, denoted by $h_1,\dots,h_\ell$ and $g_1,\dots,g_m$.
Then there exist $c_1\in h_1\cap B_1$, $c_2\in h_\ell\cap B_2$ and $d_1\in g_1\cap B_1$, $d_2\in g_m\cap B_2$.
If $B_1$ and $B_2$ share a common cut vertex $v$, then we can assume $c_1=c_2=v$, and there is a Berge path $h_1,\dots,h_\ell$ connecting $B_1$ and $B_2$ that avoids $v$.

By Claim \ref{clm: nice block find path}, there exists a path $P^i$ of length at least $\lf k/2 \rf$ contained in $B_i$ connecting $c_i,d_i$.
Since we picked shortest Berge paths connecting $B_1$ and $B_2$, only $h_1,g_1$ contain vertices in $B_1$, and only $h_\ell,g_\ell$ contain vertices in $B_2$.
Then, we claim that there exists an edge $e_1$ in the path of length at least $\lf k/2 \rf$ contained in $B_1$, such that $M(e_1)\in\{h_1,g_1\}$ or there exists $e_2$ in that path of length at least $\lf k/2 \rf$ contained in $B_2$ such that $M(e_2)\in\{h_\ell,g_\ell\}$, where $M$ is the matching in Proposition \ref{prop: matching} (recall that every distinct edge $e$ in $G$ corresponds to a unique hyperedge $M(e)$ in $M$ that contains $e$). Indeed, otherwise, if $B_1$ and $B_2$ are disjoint, then the hyperedges $\{M(e)\}_{e\in P^1}$, together with $h_1,\dots,h_\ell$, $g_1,\dots,g_m$ and $\{M(e)\}_{e\in P^2}$ form a Berge cycle of length at least $2(\lf k/2 \rf)+2\geq k+1$, a contradiction. If $B_1$ and $B_2$ share a common cut vertex $v$, then the hyperedges $\{M(e)\}_{e\in P^1}$ together with $h_1,\dots,h_\ell$ and  $\{M(e)\}_{e\in P^2}$ form a Berge cycle of length at least $2(\lf k/2 \rf)+1\geq k$, a contradiction. 
Finally, we will prove that 
there exists no $e_1\in B_1$ such that $M(e_1)\in\{h_1,g_1\}$, nor $e_2\in B_2$ such that $M(e_2)\in\{h_\ell,g_\ell\}$, which yields a contradiction. 

\begin{clm}\label{clm: no such edge}
     There exists no $e_1\in B_1$ such that $M(e_1)\in\{h_1,g_1\}$, nor $e_2\in B_2$ such that $M(e_2)\in\{h_\ell,g_\ell\}$.
\end{clm}
\begin{proof}[Proof of Claim]
Otherwise, without loss of generality, we assume that $e_1=u_1u_2\subseteq B_1$ is such that $M(e_1)\in \{h_1,g_1\}$ exists, and we may assume $M(e_1)=h_1$.
Then we claim that the subgraph $B_1-e_1$ is not $2$-connected. Otherwise, we could reduce the number of blocks by changing the edges $u_1u_2$ to a blue edge $u_1w$, where $w\in h_1\setminus B_1$, and thus contradicting the choice of $G$. 

Let us focus on $B_1-e_1$.
Assume first that $B_1-e_1$ contains a cut edge, say $a_1a_2$, dividing $B_1$ into two $2$-connected components $B_1^1,B_1^2$. We can assume that $u_1\in B_1^1$, $u_2\in B_1^2$.
Let $B_1^{i'}$ be the sub-block of $B_1^i$ obtained from $B_1^i$ by deleting the vertices in $B_1\setminus B_1'$ for $i=1,2$.

Let $v_1$ be the cut vertex of $G$ in $B_1$ (if it exists). Then the vertices in $B_1^{i'}\setminus \{v_1,a_1,a_2,u_1,u_2\}$ each have degree at least $\lf k/2 \rf$ in $B_1^{1'}$ for $i=1,2$. Thus, with a similar proof as in Claim \ref{clm: nice block find path}, the number of vertices in $B_1^i\setminus\{a_i,u_i\}$ is at least $\frac{|V(B_1^i)|-1}{2}$ with degree  at least $\lf k/2 \rf$ for $i=1,2$.
  Then, there are two paths with at least $\lf k/2 \rf+1$ vertices connecting $a_1,u_1$ in $B_1^1$ and $a_2,u_2$ in $B_1^2$ respectively.

This way we found a Berge cycle with length at least $2(\lf k/2\rf+1)\geq k$ (see Figure \ref{fig: cycle1}), a contradiction. 

If there exists a cut vertex $a$ but no cut edge in $B_1-e_1$, then $a$ divides $B_1-e_1$ into $B_1^1,B_1^2$.
In this case, similarly, we can find two paths with at least $\lf k/2\rf +1$ vertices connecting $a,u_1$ and $a,u_2$ respectively.

Then we find a cycle of length at least $2(\lf k/2 \rf)+1\geq k$ as above, a contradiction (see Figure \ref{fig: cycle2}.) 
Thus, there is no such $e_1$, and symmetrically, there is no such $e_2$, which completes the proof of the claim. 
\end{proof}
This completes the proof of Lemma \ref{lem: at most one nice or strong leaf block}.
\end{proof}

\begin{figure}[t]
    \centering
    \begin{subfigure}[b]{0.45\textwidth}
        \centering
\includegraphics[width=\linewidth]{ 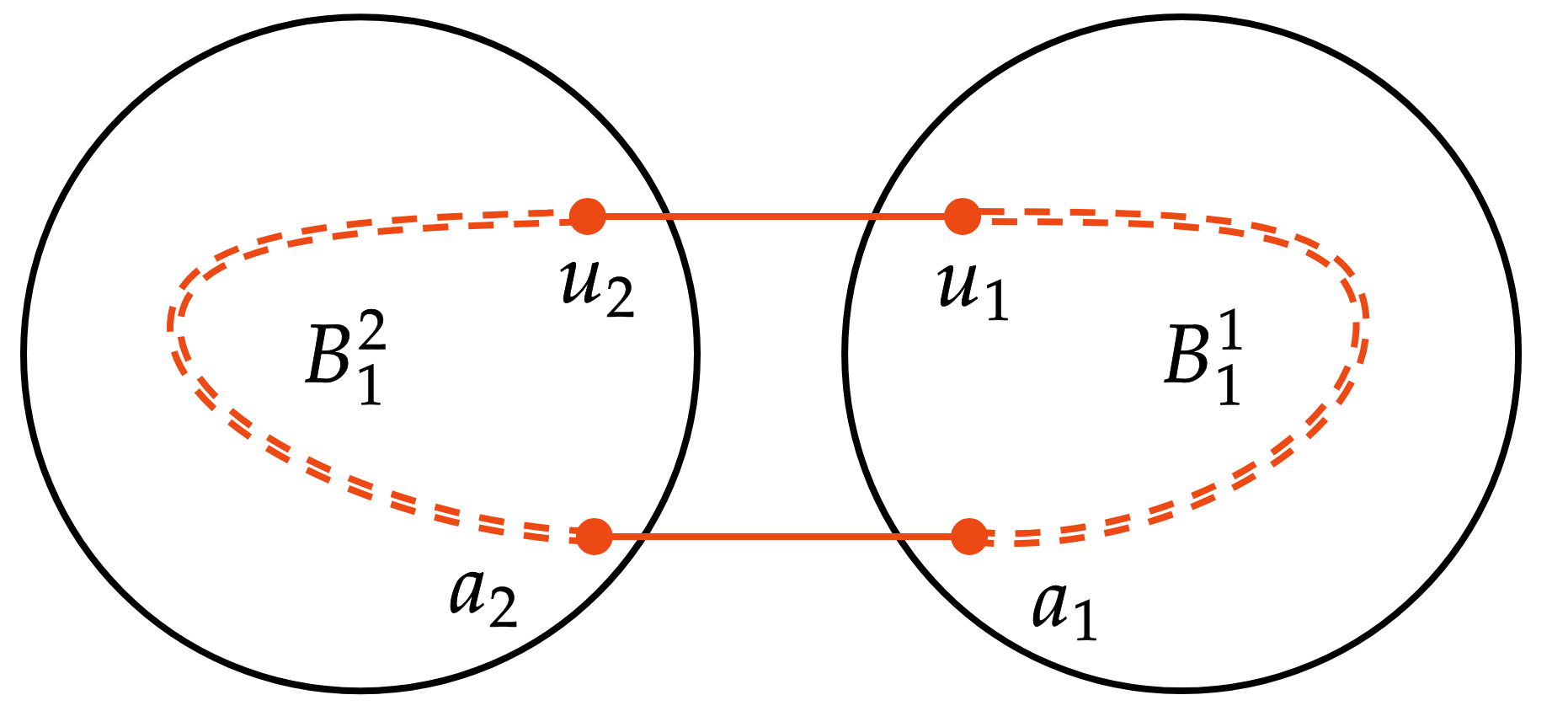}
        \caption{The case when $B_1-u_1u_2$ has a cut edge $a_1a_2$}\label{fig: cycle1}
    \end{subfigure}
    \hspace{0.01\textwidth}
    \begin{subfigure}[b]{0.42\textwidth}
\includegraphics[width=\textwidth]{ 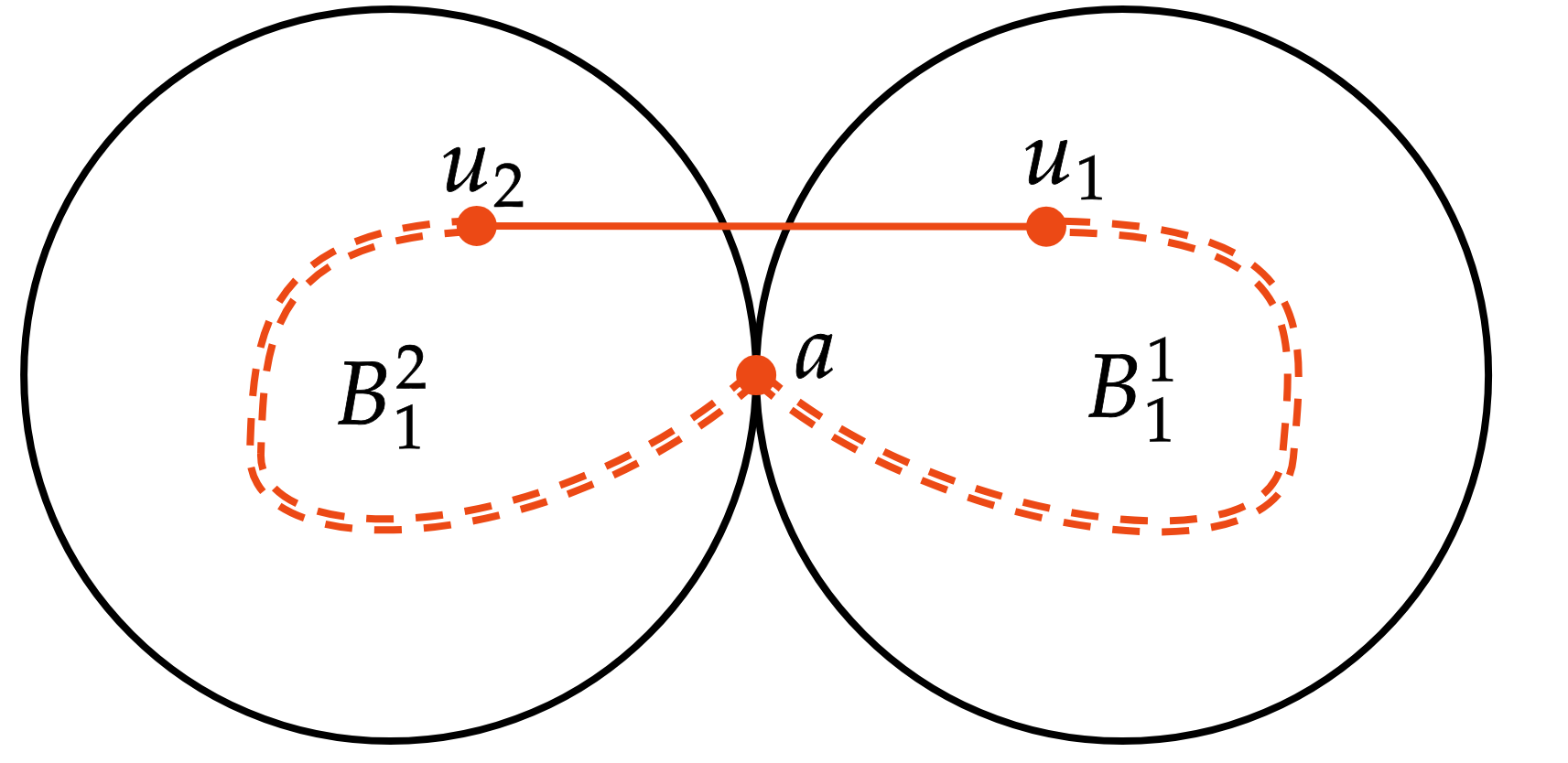}
        \caption{The case when $B_1-u_1u_2$ has a cut vertex $a$}\label{fig: cycle2}
    \end{subfigure}
\caption{}
\end{figure}

We now distinguish cases based on the parity of $k$. 

\smallskip
\noindent\textbf{If $k$ is odd.}
\smallskip
\smallskip

Clearly, $\lf k/2\rf-1< \lf(k-1)/2\rf$. 
Let $G'$ be the graph obtained by removing all the vertices with degree less than $\lf k/2\rf$ in $G$.
Then, according to Lemma \ref{lem: at most one nice or strong leaf block}, there is at most one nice or strong leaf block in $G'$.
Let $n'$ denote the number of vertices in $G'$. Similar to Claim \ref{clm: rb with one vertex}, for a vertex $v$ with degree at most $\lf k/2 \rf-1$ in $G$, the number of red $r$-cliques and blue edges containing $v$ is at most $i+\binom{\lf k/2 \rf-1-i}{r-1}$, where $i$ is the number of blue edges incident to $v$ at that point. By the convexity of the binomial coefficient, this is the largest when $i$ is 0 or $\lf k/2 \rf-1$. One can easily check that by our assumption on $k$, we have that we removed at most $\binom{\lf k/2 \rf-1}{r-1}$ red $r$-cliques and blue edges.
Similarly, for a vertex $v$ with degree less than $\lf k/2 \rf-1$ in $G$, the number of red $r$-cliques and blue edges containing $v$ is at most $\binom{\lf k/2 \rf-1}{r-1}-1$.

If $n'\geq N_{r,k}$, then by Corollary \ref{cor: extremal structure}, the number of red $r$-cliques and blue edges in $G'$ is at most $\binom{\lceil k/2\rceil}{r}+(n'-\lceil k/2\rceil)\binom{\lfloor k/2\rfloor -1}{r-1}$.
 Since in the deleting process, we delete vertices with degree at most $\lf k/2 \rf-1$, 
 the total number of red $r$-cliques and blue edges in $G$ is at most $$\binom{\lceil k/2\rceil}{r}+(n-\lceil k/2\rceil)\binom{\lfloor k/2\rfloor -1}{r-1},$$ and we are done. 
 
Now we suppose $n'<N_{r,k}$. Then the number of red $r$-cliques and blue edges in $G'$ is at most $\binom{N_{r,k}}{r}+\binom{N_{r,k}}{2}$, and the total number of red $r$-cliques and blue edges in $G$ is at most
\begin{align*}
    &\binom{N_{r,k}}{r}+\binom{N_{r,k}}{2}+(n-n')\binom{\lf k/2\rf-1}{r-1}\\
    <&\binom{\lc (k+1)/2\rc}{r-1}+(n-\lc (k+1)/2\rc)\binom{\lf (k-1)/2\rf}{r-1}
\end{align*}
for sufficiently large $n$, and we are done. 
Thus, it remains to consider the case where $k$ is even.

\smallskip
\noindent\textbf{If $k$ is even.}
\smallskip
\smallskip

It follows that $\lf k/2\rf-1=\lf (k-1)/2\rf$. We will show that there is at most one nice or strong or troublesome leaf block in $G$.

\begin{clm}\label{clm: no troublesome}
    Suppose $B_1$ is a troublesome leaf block. Then there is no other leaf block that is nice or strong or troublesome during the deleting process.
\end{clm}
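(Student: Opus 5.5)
We follow the template of Lemma \ref{lem: at most one nice or strong leaf block}. The only new ingredient is a path-finding statement for troublesome blocks. Suppose $B_1$ is troublesome and $B_2$ is another leaf block (at some stage $G_i$) that is nice, strong or troublesome.

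\textbf{Step 1: long paths inside a troublesome block.} Since $k$ is even, $\lf k/2\rf=k/2$. A troublesome block $B_1$ is $2$-connected, has $n_1:=|V(B_1)|\ge N_{r,k}\ge 4k$ vertices, and satisfies $e(B_1)\ge (k/2-4/3)(n_1-1)$. It contains no cycle of length at least $k$, because $G$ has none. We aim to show that any two vertices $x,y\in V(B_1)$ are joined by a path in $B_1$ with at least $k/2+1$ vertices. Set $k'=k-1$, which is odd, so that $(k'+1)/2=k/2$. Apply Lemma \ref{lem: when k=8} with $k'$ in place of $k$. Its hypothesis is $e(B_1)\ge (k/2-4/3)n_1$, which we have up to an additive $O(k)$ loss. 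So we first check that the proof of Lemma \ref{lem: when k=8} (via Theorems \ref{thm: more general stability} and \ref{thm: for small k=7}) tolerates this $O(k)$ slack when $n_1\ge 4k$. That proof only compares linear coefficients such as $5/2<8/3$, so this should be routine. A subtler point is that Lemma \ref{lem: when k=8} requires no cycle of length at least $k'$, whereas $B_1$ only avoids cycles of length at least $k$. Here we instead apply Theorem \ref{thm: more general stability} directly with parameter $k$, even, so $t=k/2-1$. Since $e(B_1)>e(W(n_1,k,t-1))\approx (t-1)n_1$ and our bound is about $(t-1/3)n_1$, we are in the second alternative: $B_1-A$ is a star forest for some $A$ with $|A|\le t$. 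The edge count then forces $|A|=t$, and forces $A$ to have a linear number of common neighbours. From $2$-connectivity and $\delta\ge 2$ we build an $x$--$y$ path through all of $A$, alternating with common neighbours, giving at least $2t+1\ge k/2+1$ vertices. This structural step is the main obstacle: the case analysis on where $x,y$ lie relative to $A$ and to the star components must be carried out carefully.

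\textbf{Step 2: joining two blocks.} Given the path property for $B_1$, and for $B_2$ (by Claim \ref{clm: nice block find path} or by Step 1), we take two disjoint shortest Berge paths between $B_1$ and $B_2$ using Lemma \ref{lem: 2 berge path conn}. If the blocks share a cut vertex, we take one Berge path avoiding it. Concatenating these with the inner paths $P^1,P^2$ gives a Berge cycle of length at least $2(k/2)+1\ge k$. This is a contradiction, unless some hyperedge $M(e_j)$ with $e_j\in P^j$ coincides with $h_1,g_1$ or $h_\ell,g_m$.

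\textbf{Step 3: excluding collisions.} As in Claim \ref{clm: no such edge}, by minimality of the number of components and blocks, $B_1-e_1$ is not $2$-connected. Otherwise we could recolor $e_1$ as a blue edge to a vertex of $h_1$ outside $B_1$ and reduce the number of blocks. So $B_1-e_1$ has a cut edge or a cut vertex splitting it into $B_1^1,B_1^2$. In the troublesome case, each piece containing many vertices inherits the density, since deleting one edge costs $O(1)$. A piece that is small is absorbed because the other piece then carries almost all the edges. Step 1 applied to the large pieces then yields two long paths and, as in Figures \ref{fig: cycle1}--\ref{fig: cycle2}, a Berge cycle of length at least $k$. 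This contradiction completes the argument.
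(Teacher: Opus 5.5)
There is a genuine gap in Step 3. You try to \emph{exclude} collisions $M(e_1)\in\{h_1,g_1\}$ inside the troublesome block by copying Claim \ref{clm: no such edge}. That claim produces long paths in the pieces of $B_1-e_1$ only through the degree condition of nice/strong blocks, and a troublesome block is bad, so nothing of that kind is available. Your replacement does not work either. First, density need not split evenly: one large piece can have about $k/2-1$ edges per vertex and another only about $k/2-2$, so Step 1 does not apply to the sparser piece. Second, a small piece simply gives a short path.

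Concretely, let $t=k/2-1$ and $D_1=K_t\vee\overline{K}_m$, with $a,u_1$ in the $K_t$ part. Add a triangle $a w u_2$ and the edge $e_1=u_1u_2$, and let $a$ be the cut vertex of $G$.
\begin{itemize}
\item This block is $2$-connected.
\item It is bad: the independent vertices peel off, then $w$ and $u_2$, then the clique vertices.
\item It has about $t\,n_1$ edges, so it is troublesome for large $m$.
\item Its circumference is only $k-1$. Every $u_1$--$a$ path in $D_1$ has at most $2t-1=k-3$ vertices, so the cycle through $e_1$ has at most $k-1$ vertices and no contradiction arises.
\end{itemize}
Minimality does not help here either. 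If $h_1\setminus V(B_1)$ lies in a block $Q$ at $a$, replacing $e_1$ by $u_1w'$ or $u_2w'$ merely trades $\{B_1,Q\}$ for two other blocks. So your argument cannot rule out $M(e_1)=h_1$. The inner path $P^1$ of Step 2 must then avoid $e_1$, which can cost vertices. Your Step 1 property (any two vertices joined by a path with $k/2+1$ vertices) gives no control over avoiding a prescribed edge.

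The paper therefore tolerates collisions instead of excluding them. It takes a cycle $C^1$ of length at least $k-2$ in $B_1$ (Theorem \ref{thm1.4}). It connects $C^1$ itself to $B_2$ by shortest Berge paths, so only $h_1$ and $g_1$ meet $C^1$. Along $C^1$ it finds a path between a vertex of $h_1$ and a vertex of $g_1$ that avoids the colliding edges. This path has at least $k/2$ vertices with one collision and at least $k/2-1$ with two. The lost vertex is then recovered by bookkeeping:
\begin{itemize}
\item $P^2$ has $k/2+1$ vertices on a nice/strong side, where Claim \ref{clm: no such edge} is valid.
\item There are extra connecting hyperedges when $\ell\ge 2$ or $m\ge 2$.
\item One hyperedge cannot be $M$ of an edge in both blocks.
\item In the tight case $\ell=m=1$, a $(k-1)$-cycle, or Lemma \ref{lem: when k=8} applied with $k-1$, gives a $(k/2+1)$-vertex path.
\end{itemize}

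Separately, your Step 1 assertion that the edge count forces $|A|=t$ is false as stated. With $|A|=t-1$, a dense star forest already allows about $t\,n_1>(t-\frac13)n_1$ edges. You would need the cycle condition and a re-choice of $A$ there as well.
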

\begin{proof}[Proof of Claim]
Suppose $B_1$ is a troublesome leaf block, and $B_2$ is another leaf block which is nice, strong or troublesome. Since we have $e(B_1)\geq (k/2-4/3)(|V(B_1)|-1)$, by Theorem \ref{thm1.4}, there is a cycle of length at least $k-2$ in $B_1$, denoted by $C^1$. 

Let us consider two hyperedges $h,g$ such that $h$ contains a vertex $c\in V(C^1)$ and $g$ contains $d\in V(C_1)$. Then we have the following possibilities. 

\begin{itemize}
    \item \textbf{Situation 1.} There is no edge $e_1$ in $C^1$ with $M(e_1)\in \{h,g\}$. Then there is a path $P^{1,0}$ connecting two different vertices in $h$ and $g$,
 and with at least $k/2$ vertices.
 \item \textbf{Situation 2.}
There is exactly one edge $e_1$ in $C^1$ with $M(e_1)\in \{h,g\}$. Then there is a path $P^{1,1}$ connecting two different vertices in $h$ and $g$, avoiding $e_1$ and with at least $k/2$ vertices.
\item \textbf{Situation 3.} 
There are two edges $e_1,e_1'$ in $C^1$ such that $\{M(e_1),M(e_1')\}=\{h,g\}$. Then there is a path $P^{1,2}$ connecting two different vertices in $h$ and $g$, avoiding $e_1,e_1'$
 and with at least $k/2-1$ vertices.
 \end{itemize}

 Indeed, in Situation 1, one of the two subpaths of $C^1$ connecting $c$ and $d$ satisfies these properties. In Situation 2, let $e_1=x_1x_2$, then it is easy to check that for all possible $d\in C^1$, there exists a path in $C^1$ with at least $k/2$ vertices connecting $d,x_1$ or $d,x_2$ and avoiding the edge $e_1$. In Situation 3, suppose $e_1=x_1x_2$ and $e_1'=x_1'x_2'$.
We may assume that $x_1,x_2,x_2',x_1'$ are in clockwise order in $C^1$.
Then, it is easy to check that there is a path with at least $k/2-1$ vertices, avoiding $e_1,e_1'$ and connecting $x_2,x_2'$ or $x_1',x_1$ .

\smallskip
\noindent\textbf{Case 1.} $B_2$ is nice or strong.
\smallskip

Then, there are two cases on whether $C^1$ and $B_2$ share a common cut vertex.

\smallskip
\noindent \textbf{Subcase 1.1.} $C^1$ and $B_2$ do not share a common cut vertex.
\smallskip

Then, according to Lemma \ref{lem: 2 berge path conn}, there are two disjoint Berge shortest paths connecting $C^1$ and $B_2$, denoted by $h_1,\dots,h_\ell$ and $g_1,\dots,g_m$.
Moreover, only $h_1$ and $g_1$ intersect with $V(C^1)$, and only $h_\ell,g_m$ intersect with $V(B_2)$.
Then there exist $c_1\in h_1\cap C^1$, $c_2\in h_\ell\cap B_2$ and $d_1\in g_1\cap C^1$, $d_2\in g_m\cap B_2$ that are the defining vertices in the two Berge paths.
Then, according to Lemma \ref{lem: x,y path}, there is a path (denoted $P^2$) with at least $k/2+1$ vertices connecting $c_2,d_2$ in $B_2$.
        According to Claim \ref{clm: no such edge}, we know that there is no edge $e_2\in B_2$ such that $M(e_2)\in \{h_\ell,g_m\}$. 

Let $h=h_1$ and $g=g_1$.
In each of the three situations above, we find a path connecting a vertex of $h_1$ to a vertex of $g_1$ inside $B_1$, with at least $k/2-1$ vertices. Then the hyperedges $M(e)$ for the edges $e$ in this path or in $P^2$, together with $h_1,\dots,h_\ell$, and $g_1,\dots,g_m$ form a Berge cycle with at least $k$ vertices, a contradiction.

\begin{figure}[t]
    \centering
    \begin{subfigure}[b]{0.42\textwidth}
        \centering
        \includegraphics[width=\linewidth]{ 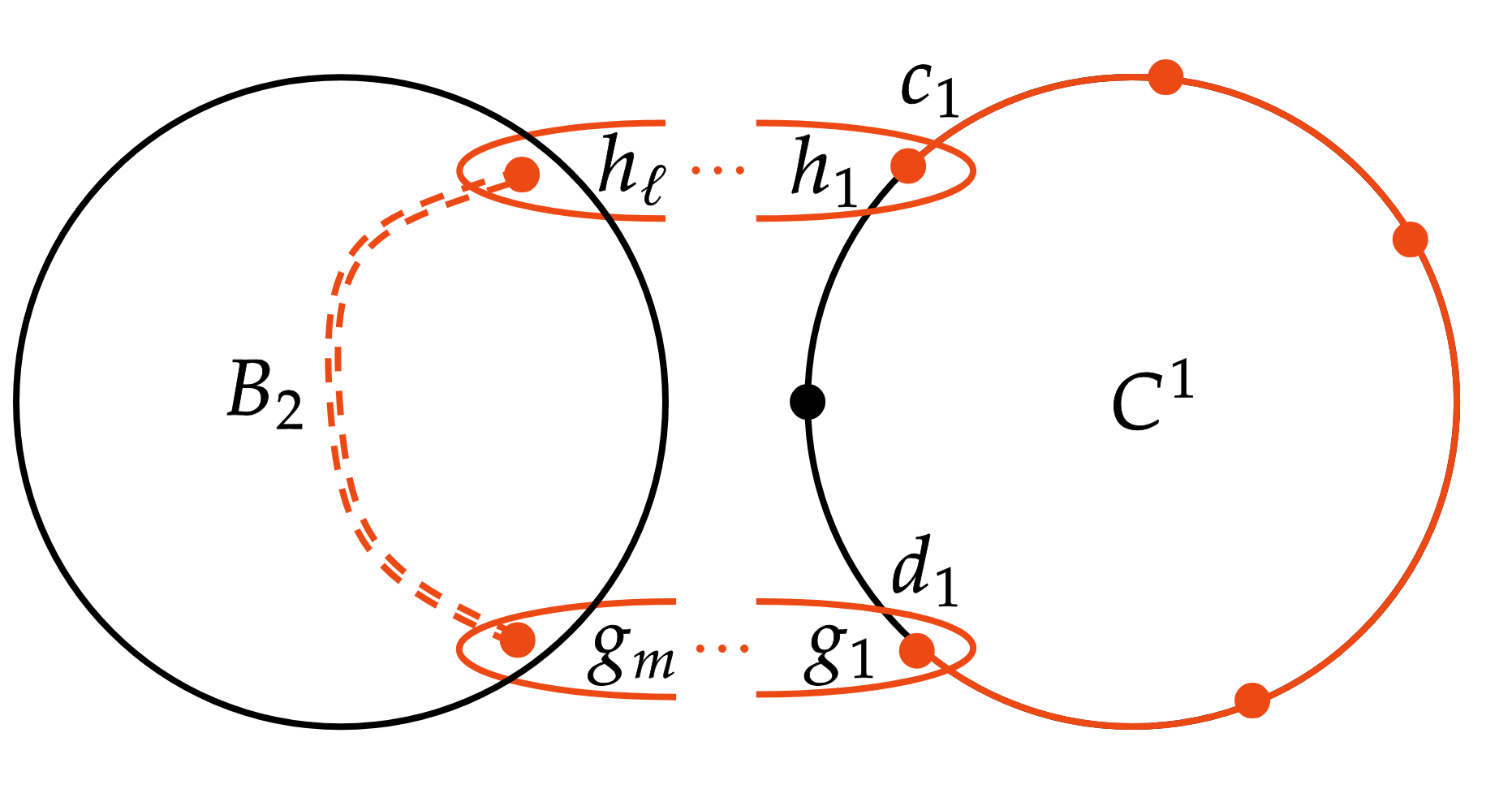}
        \caption{
        The cycle obtained in Situation 1}\label{fig: P10}
    \end{subfigure}
    \hspace{0.01\textwidth}
    \begin{subfigure}[b]{0.42\textwidth}
        \includegraphics[width=\textwidth]{ 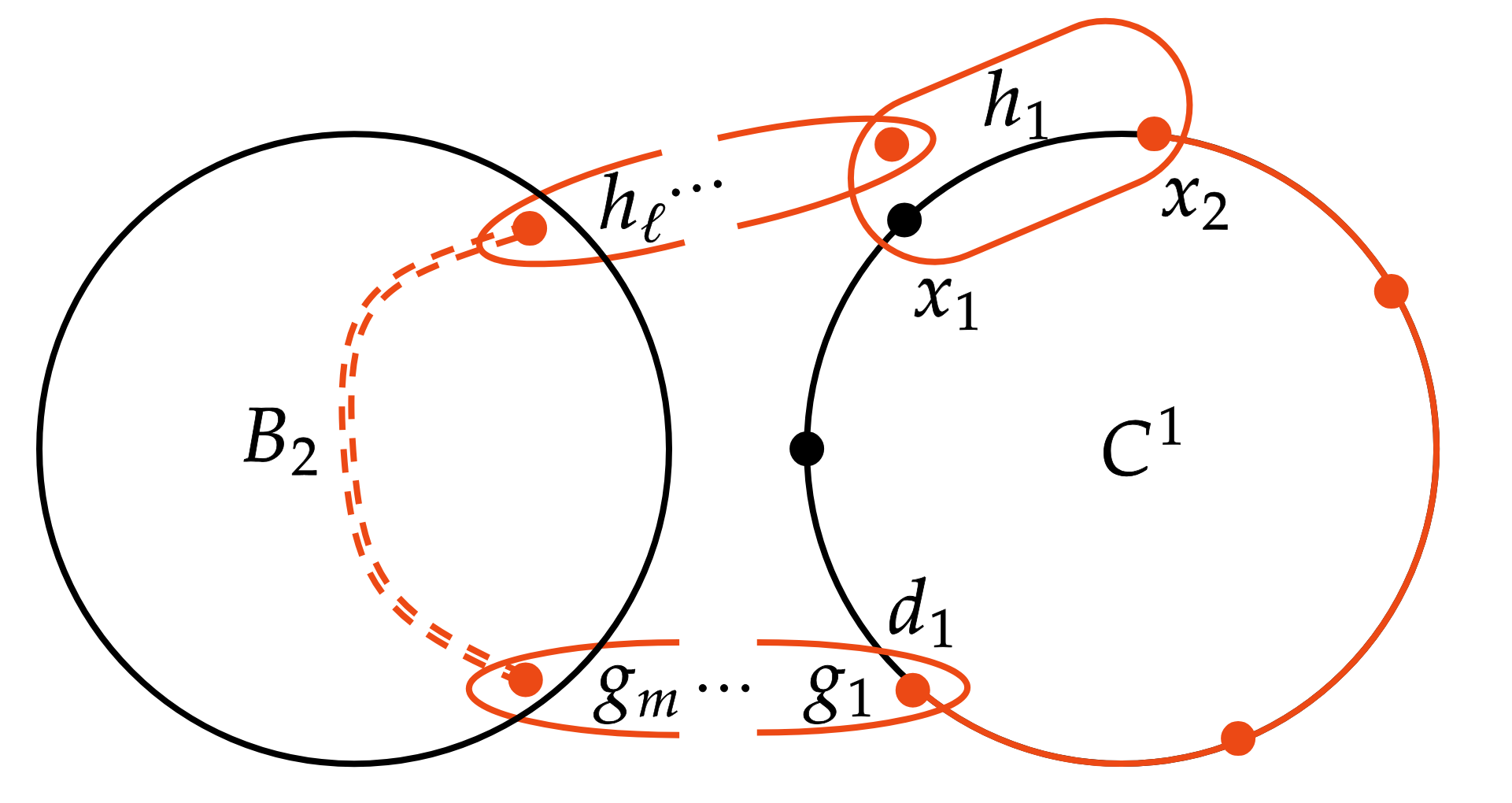}
        \caption{
        The cycle obtained in Situation 2}\label{fig: P11}
    \end{subfigure}
    \hspace{0.01\textwidth}
    \begin{subfigure}[b]{0.42\textwidth}
        \includegraphics[width=\textwidth]{ 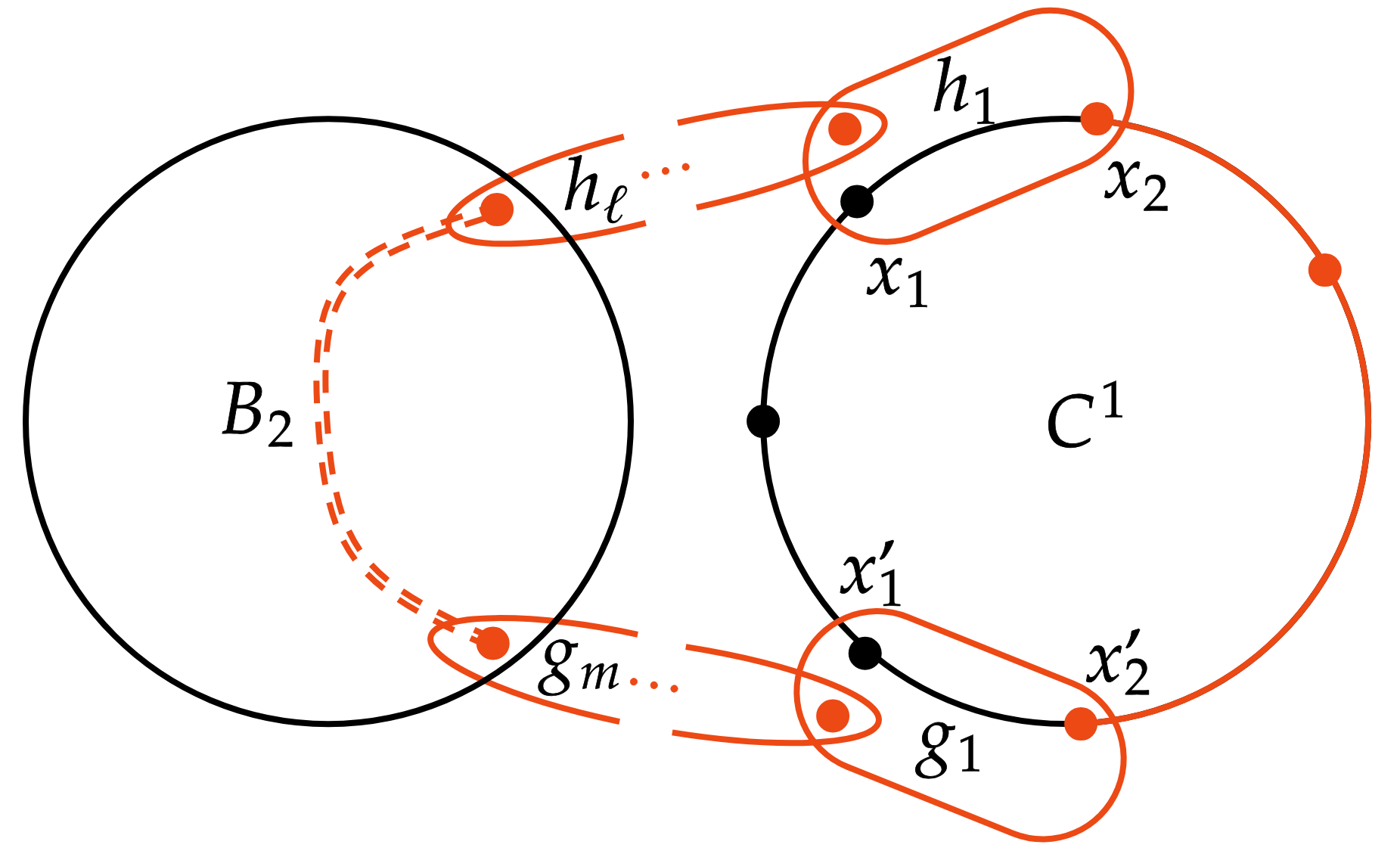}
        \caption{
    The cycle obtained in Situation 3}\label{fig: P12}
    \end{subfigure}
    \hspace{0.01\textwidth}
    \begin{subfigure}[b]{0.42\textwidth}
        \includegraphics[width=\textwidth]{ 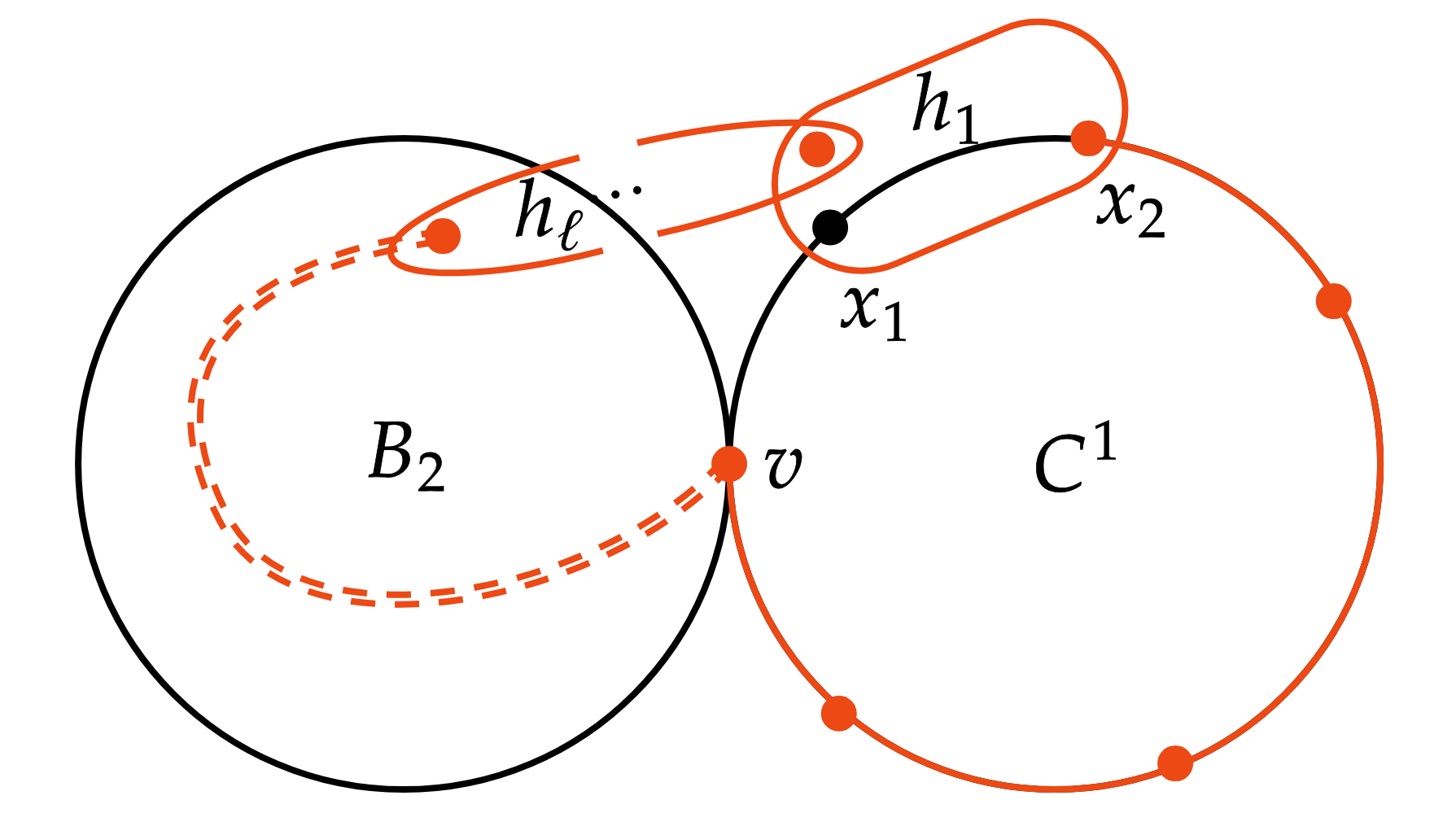}
        \caption{
        The cycle obtained in Situation 1 when $C^1$ and $B_2$ share a cut vertex}\label{fig: P11 share}
    \end{subfigure}
    \end{figure}

\smallskip
\noindent
\textbf{Subcase 1.2.} $C^1$ and $B_2$ share a common cut vertex. 
\smallskip

In this case, we may assume $d_1=d_2=v$ is the common cut vertex, and there is a shortest Berge path $h_1,\dots,h_\ell$ connecting $C^1$ and $B_2$ with end defining vertices $c_1\in C^1$ and $c_2\in B_2$, that does not use $v$ as a defining vertex.
Then the only defining hyperedge intersecting with $V(C^1)$ is $h_1$, and the only defining hyperedge intersecting with $V(B_2)$ is $h_\ell$.
Furthermore, there is a path $P^2$ with at least $k/2+1$ vertices connecting $c_2,v$ in $B_2$ by Claim \ref{clm: nice block find path}, and there is no edge $e_2$ in $P^2$ such that $M(e_2)=h_\ell$ by Claim \ref{clm: no such edge}.

We claim that we can find a path with at least $k/2$ vertices in $C^1$ connecting $v$ and a different vertex in $h_1$ that does not use any edge $e_1$ with $M(e_1)=h_1$. Informally, we can say that $h=h_1$ and there is no $g$, thus we have Situation 1 or 2. More precisely, there is no $e_1$ in $C^1$ such that $M(e_1)=h_1$, then one of the two subpaths connecting $v$ and $c_1$ satisfies these properties, while if there is such an $e_1=x_1x_2$, then either the path connecting $c,x_1$ or the path connecting $c,x_2$ satisfies these properties.

Then the hyperedges $M(e)$ for the edges $e$ in this path and $P^2$, together with $h_1,\dots, h_\ell$ form a
Berge cycle of length at least $k/2+1+k/2-1=k$, a contradiction. Notice that in this case, the vertex $v$ is counted twice, in the paths 
inside $C^1$
and $P^2$.
(See Figure \ref{fig: P11 share} for an example when $C^1$ has situation 2).

This completes the proof of the case where $B_2$ is a nice or strong leaf block.

\smallskip
\noindent
\textbf{Case 2.} $B_2$ is also a troublesome block.
\smallskip

Then, there is a cycle of length at least $k-2$ in $B_2$, which is denoted by $C^2$. 
Then, we consider the cases whether $C^1$ and $C^2$ share a common cut vertex.

\smallskip
\noindent\textbf{Subcase 2.1.} $C^1$ and $C^2$ share no common cut vertex.
\smallskip

Then, we similarly find the shortest Berge path $h_1,\dots,h_\ell$ and $g_1,\dots,g_m$ as above.
Again, $c_1,c_2$ (resp.$d_1,d_2$) are the end defining vertices of $h_1,\dots,h_\ell$ (resp. $g_1,\dots,g_m$) in $C^1$ and $C^2$, respectively.


Recall that in $C^2$ we also have one of the three situations described earlier. In particular, if we find paths with at least $k$ vertices (Situations 1 and 2) in both cycles, then the hyperedges $M(e)$ for the edges of these paths together with $h_1,\dots, h_\ell$ and $g_1,\dots,g_m$ form a Berge cycle with at least $k$ vertices, a contradiction. 

If $\ell\ge 2$ and $m\ge 2$, then the paths inside $C^1$ and $C^2$ have at least $k-4$ edges, the hyperedges $M(e)$ for these edges and the at least 4 hyperedges $h_1,\dots, h_\ell$ and $g_1,\dots,g_m$ form a Berge cycle of length at least $k$, a contradiction. 

If $m=1$ and $\ell\ge 2$, then we cannot have both $C^1$ and $C^2$ in Situation 3, since then an edge $e_1$ inside $C^1$ and an edge $e_2$ inside $C^2$ would have $M(e_1)=M(e_2)=h_1$, which is impossible. Therefore, the paths inside $C^1$ and $C^2$ have at least $k-3$ edges, the hyperedges $M(e)$ for these edges and the at least 3 hyperedges $h_1,\dots, h_\ell$ and $g_1,\dots,g_m$ form a Berge cycle of length at least $k$, a contradiction. The same holds if $m\ge 2$ and $\ell=1$.

Finally, assume that $m=\ell=1$. Similarly to the above, we are done unless one of the cycles, say $C^2$ is in Situation 3. Then $C^1$ is in Situation 1. If the cycle $C^1$ in $B_1$ has length at least $k-1$, then one can easily check that there is a path with at least $k/2+1$ vertices from $c_1$ to $d_1$. If there is no cycle of length at least $k-1$ in $B_1$, then we can apply Lemma \ref{lem: when k=8} to show that there exists a path connecting $c_1,d_1$ with at least $k/2+1$ vertices. 
Together with the path with at least $k/2-1$ vertices in $ B_2$, we can find a Berge cycle of length at least $k$ as in the above cases.

\smallskip
\noindent\textbf{Subcase 2.2.} $C^1$ and $C^2$ share a common cut vertex
\smallskip

In this case, we may assume $d_1=d_2=v$ is the common cut vertex, and there is a shortest Berge path $h_1,\dots,h_\ell$ connecting $C^1$ and $C^2$ with end vertices $c_i\in C^i$ respectively.

Similarly to Subcase 1.2, we can find a subpath with at least $k/2$ vertices in $C^1$ from $c_1$ to $v$ without using $e_1$ with $M(e_1)=h_1$, and a subpath with at least $k/2$ vertices in $C^2$ from $c_2$ to $v$ without using $e_2$ with $M(e_2)=h_\ell$. Note that we only have $k-1$ vertices in the union of these two paths.

If $\ell\ge 2$, then the hyperedges $M(e)$ for the edges $e$ of these two paths together with $h_1,\dots, h_\ell$ form a Berge cycle of length at least $k$, a contradiction. 

If $\ell=1$, then at least one of the two blocks, without loss of generality $B_1$ does not have an edge $e_1$ with $M(e_1)=h_1$. If there is a cycle of length $k-1$ in $B_1$ that contains $c_1$, then there is a path of length at least $k/2+1$ connecting $v$ and $c_1$ on this cycle.  If there is a cycle of length $k-1$ in $B_1$ that does not contain $c_1$, then there is a shortest path inside $B_1$ from $c_1$ to a vertex $c_1'$ this cycle. If $c_1'\neq v$, then there is a  path of length at least $k/2+1$ connecting $v$ and $c_1'$ on this cycle, thus there is a  path of length at least $k/2+1$ connecting $v$ and $c_1$ inside $B_1$. If $c_1'=v$, then since $B_1$ is 2-connected, there is another path from $c_1$ to the cycle of length $k-1$, to a vertex $c_1''$. Then there is a  path of length at least $k/2+1$ connecting $v$ and $c_1''$ on this cycle, thus there is a  path of length at least $k/2+1$ connecting $v$ and $c_1$ inside $B_1$.

If $B_1$ contains no cycles of length at least $k-1$, then we can apply Lemma \ref{lem: when k=8} to show that there exists a path connecting $c_1,v$ with at least $k/2+1$ vertices. 
Together with the path with at least $k/2$ vertices in $ B_2$, we can find a Berge cycle of length at least $k$ as in the above cases.




Now, we have completed the proof of the case where $B_2$ is also troublesome, thereby proving the claim. 
\end{proof}


If there is a troublesome block $B$,
then, according to Claim \ref{clm: no troublesome}, in the deleting process, there is no other leaf block that is nice or strong or troublesome.
It implies that there is an order to delete the bad leaf blocks such that $B$ is the last one.
Since $B$ has size at least $N_{r,k}$, by Corollary \ref{cor: extremal structure} the number of red $r$-cliques and blue edges in $B$ is at most ${\lc (k+1)/2 \rc\choose r}+(|V(B)|-\lc (k+1)/2 \rc)\binom{\lf (k-1)/2\rf }{r-1}$. We can similarly calculate that
\begin{align*}
    g_r(G^{rb})\leq & {\lc (k+1)/2 \rc\choose r}+(|V(B)|-\lc (k+1)/2 \rc)\binom{\lf (k-1)/2\rf }{r-1}+(n-|V(B)|)\binom{\lf (k-1)/2 \rf}{r-1}\\
    = &\binom{ k/2 +1}{r}+(n- k/2 -1)\binom{k/2 -1}{r-1}.
\end{align*}
This completes the proof in this case. Thus, we may assume that there is no troublesome block.
There are two types of bad blocks left in $G$. We deal with them separately. 
\begin{itemize}
    \item \textbf{Type 1:}
     Bad blocks $B$ of order at most $N_{r,k}$, where $C_{r,k}$ is the constant defined in the definition of troublesome blocks. 
     
     During the deletion of vertices of degree at most $ k/2 -1$, for the last vertex we deleted, we only deleted one edge. Thus, according to Claim \ref{clm: rb with one vertex}, the value of $g_r(G^{rb})$ decreases by at most $\binom{ k/2-1}{r-1}(|V(B)|-2)+1\leq \left(\binom{k/2-1}{r-1}-\frac{\binom{k/2-1}{r-1}-1}{C_{r,k}-1} \right)(|V(B)|-1).$
    \item \textbf{Type 2:}
Bad blocks $B$ of order more than $C_{r,k}\geq 4k$, with total number of edges at most $( k/2-\frac{4}{3})|V(B)|$.

During the deleting process, every vertex we deleted has degree at most $ k/2-1$.
The number of vertices we deleted with degree at most $ k/2-2$ is at least $\frac{|V(B)|}{k}$,
otherwise, the number of edges in $B$ is at least $\frac{|V(B)|}{2k}+\lf\frac{2k-1}{2k}(|V(B)|-1)\rf(k/2-1)> (k/2-\frac{4}{3})|V(B)|$ when $|V(B)|\geq C_{r,k}$, a contradiction.
Thus according to Claim \ref{clm: rb with one vertex}, the value of $g_r(G^{rb})$ decreases by at most $$\binom{ k/2-1}{r-1}(|V(B)|-1)-\frac{|V(B)|}{2k}< \left(\binom{k/2-1}{r-1}-\frac{1}{3k} \right)(|V(B)|-1).$$
\end{itemize}

Then, when deleting the Type 1 and Type 2 bad blocks one by one, there is a constant $\delta=\delta(r,k)$ such that by average, for each vertex we delete, the value of $g_r(G^{rb})$ decreases by at most $\binom{k/2-1}{r-1}-\delta$. 

when $n$ is sufficiently large, we have 
\begin{align*}
    g_r(G^{rb})&\leq \binom{N_{r,k}}{r}+\binom{N_{r,k}}{2}+(n-n')\left(\binom{\lf k/2 \rf-1}{r-1}-\delta \right)\\
    &<\binom{k/2+1}{r}+(n-k/2-1)\binom{ k/2-1}{r-1},
\end{align*}
a contradiction. This completes the proof. 
\end{proof}

\section{Concluding remarks}
In this paper, we determined the maximum number of hyperedges in an connected $n$-vertex $r$-uniform Berge-$P_k$-free hypergraph for every $k\geq 2r+2\geq 8$ and sufficiently large $n$.
We also determined the maximum number of hyperedges in a $2$-connected $n$-vertex $r$-uniform hypergraph without Berge cycles of length at least $k$ for every $k\geq 2r+2\geq 8$ and sufficiently large $n$.
Both thresholds on $k$ are the best possible in the sense that the extremal structure we constructed is not optimal for smaller $k$.
A natural question is to determine the maximum number of hyperedges in a connected $n$-vertex $r$-uniform Berge-$P_k$-free hypergraph for every $k< 2r+2$ and sufficiently large $n$, and also for forbidden Berge cycles.

Recall that when $k\geq 4$ and $m$ is sufficiently large, then $W(m,k,\lf k/2\rf-1)$ is the extremal structure for connected $m$-vertex graphs without $k$-vertex path.
For integer $r\geq 3$, we add $r-2$ new vertices to each edge of $W(m,k,\lf k/2\rf-1)$, and the resulting hypergraph is denoted by $\cG(n,k,r)$, where $n=m+(r-2)e(W(m,k,\lf (k-1)/2\rf))$. Then $e(\cG(n,k,r))=e(W(m,k,\lf (k-1)/2\rf))$ and $\cG(n,k,r)$ is a connected $n$-vertex $r$-uniform Berge-$P_k$-free hypergraph.
This implies that for every $k\geq 4$ and $r\geq 3$, $\ex_r^{conn}(n,\text{Berge-}P_k)=\Theta(n)$.
It is natural to ask whether the maximum number of hyperedges in a $2$-connected $n$-vertex $r$-uniform Berge-$\cC_{\geq k}$-free hypergraph is also $\Theta(n)$ for every $k\geq 5$ and $r\geq 3$.


Assume that a Berge-$P_k$-free connected hypergraph $\cH$ has $\ex^{conn}_r(n,\textup{Berge-}P_k)-O(1)$ hyperedges. In our proof, for each vertex not in the nice or strong component, when we remove that vertex, we remove at most $\binom{\lfloor (k-1)/2\rfloor}{r-1}-1$ red $r$-cliques and blue edges. Therefore, when applying the deletion process for $\cH$, we remove $O(1)$ vertices this way. In other words, all but $O(1)$ vertices of $\cH$ are in the same component of the red-blue graph given by Proposition \ref{prop: matching}. It would be interesting to turn this argument into a proper stability result, describing the structure of $\cH$. We remark that in the case $k$ is odd, a similar statement holds when forbidding each Berge cycle of length at least $k$.

\bigskip
\smallskip
\noindent{\bf{Funding}}
\smallskip

The research of Zhao is supported by the China Scholarship Council (No. 202506210250) and
the National Natural Science Foundation of China (Grant 12571372).

The research of Gerbner is supported by the National Research, Development and Innovation Office - NKFIH under the grant KKP-133819 and by the János Bolyai scholarship.

The research of Zhou is supported by the National Natural Science Foundation of China (Nos. 12271337 and 12371347).

\bigskip
\noindent{\bf{Declaration of interest}}
\smallskip

The authors declare no known conflicts of interest.

\end{document}